\documentclass{amsart}

\usepackage{lmodern}
\usepackage[T1]{fontenc}
\usepackage[utf8]{inputenc}
\usepackage[english]{babel}
\usepackage{csquotes}
\usepackage[unicode=true, psdextra]{hyperref}
\usepackage[alphabetic, msc-links]{amsrefs}
\usepackage{bm}
\usepackage{mathrsfs}
\usepackage{todonotes}
\usepackage{mathtools}
\usepackage{enumitem}
\usepackage{tikz-cd}
\usepackage{microtype}

\newtheorem{theorem}{Theorem}[section]
\newtheorem{lemma}[theorem]{Lemma}

\newtheorem{definition}[theorem]{Definition}

\newtheorem{proposition}[theorem]{Proposition}
\newtheorem{corollary}[theorem]{Corollary}

\theoremstyle{remark}

\numberwithin{equation}{section}

\newcommand{\eps}{\varepsilon}
\newcommand{\Per}{\text{Per}}
\newcommand{\per}{\text{per}}
\newcommand{\Z}{\mathbb{Z}}
\newcommand{\N}{\mathbb{N}}

\DeclareMathOperator{\supp}{\text{supp}}

\begin{document}

\title{On generic subsystems of a Smale space}


\author{\'Angela Flores-Concha}
\address{Department of Mathematics, The Pennsylvania State University.}
\email{aaf5457@psu.edu}
\thanks{The author was partially supported by the National Science Foundation grant DMS-2247553.}

\keywords{generic, Smale space, subshifts, Toeplitz subshift}
\subjclass[2020]{Primary 37B05; Secondary 37B10}
\date{\today}


\begin{abstract}
    We study properties of generic compact invariant subsystems of a fixed Smale space in the Hausdorff topology. For a transitive Smale space, we show that the hyperspace of all subsystems contains an open and dense family of NMC subsystems, a dynamically degenerate class with finite orbit structure. A substantially different picture emerges in the closure of the infinite transitive locus: minimality, unique ergodicity, and zero topological entropy are generic, while subsystems conjugate to regular Toeplitz shifts are dense. Despite generic zero entropy, typical subsystems exhibit large oscillations in complexity: we prove that they have infinite slow entropy at the polynomial scale, and their slow entropy at the stretched-exponential scale is equal to one. For mixing Smale spaces, we further study the closure of the infinite totally transitive subsystems. In this hyperspace, minimality, unique ergodicity, and zero entropy remain generic; the conjugacy class of any Sturmian subshift is dense; and uniquely ergodic subsystems whose invariant measure is weakly mixing form a residual set. In particular, topological weak mixing is generic in this space.
\end{abstract}

\maketitle
    \section{Introduction}

        Smale spaces provide a topological framework for hyperbolic dynamics, encompassing shifts of finite type and the basic sets of Axiom A diffeomorphisms while retaining the fundamental features of uniform hyperbolicity, such as expansivity, shadowing, and local product structure. The purpose of this paper is to describe the generic behavior of compact invariant subsystems of a fixed Smale space in the sense of Baire category.

        The notion of Baire category, introduced at the end of the nineteenth century, has long been used to provide a topological description of typical behavior in several parameter spaces that naturally arise in the framework of topological dynamics. 
        In the early literature, Oxtoby and Ulam \cite{OxtobyUlam} proved that, for a polyhedron with a suitable Lebesgue--Stieltjes measure, ergodicity is generic among measure-preserving homeomorphisms in an appropriate uniform topology. In 1944, Halmos \cite{Halmos} considered the group of Lebesgue-preserving automorphisms of the unit interval and proved that the set of weakly mixing transformations is residual in the weak topology. This result was complemented by Rokhlin \cite{Rohlin}, who established that, in the same setting, strongly mixing automorphisms form a set of first category. A unifying theme in all these results is that the underlying measure space is fixed, while the measure-preserving transformation varies. Related questions were later studied in spaces of homeomorphisms; for instance, Kechris and Rosendal \cite{KechrisRosendal} proved that the homeomorphism group of the Cantor set has a residual conjugacy class.

        By the early 1960s, genericity questions in the space of invariant probability measures on a fixed dynamical system were also being considered. Parthasarathy, in \cite{ParthasarathyErgodic} and \cite{ParthasarathyMixing}, studied generic ergodic and mixing measures for shift transformations. Subsequently, Sigmund \cite{SigmundAxiomA} developed this point of view in the setting of differentiable hyperbolic dynamics. 

        A further possibility is to fix the ambient dynamics and study genericity within the space of invariant subsystems, endowed with the Hausdorff topology. Hochman \cite{Hochman} approached generic topological dynamics from two different perspectives, studying homeomorphisms of the Cantor set and a universal hyperspace of subshifts over the Hilbert cube, and establishing a correspondence between genericity in the two models. The hyperspace point of view was later reconsidered by Frisch and Tamuz \cite{FrischTamuz}, who studied generic entropy properties of spaces of finite-alphabet subshifts for finitely generated amenable group actions.

        Most directly related to the present work, Pavlov and Schmieding \cite{PavlovSchmieding} recently obtained a detailed description of generic one-dimensional subshifts. Their results show a dramatic contrast between the full space of subshifts and the closures of the infinite transitive and totally transitive loci: while generic behavior in the full space is highly degenerate, substantially richer dynamics becomes typical after imposing transitivity conditions.
        Their work provides the immediate symbolic precedent for the present paper. A subshift of finite type is itself a Smale space, and every topologically transitive Smale space admits a finite-to-one symbolic presentation by an irreducible shift of finite type. This makes the passage from subshifts to Smale spaces a natural next step. However, the extension is not formal, since the induced maps on hyperspaces do not automatically preserve residuality. The hyperbolic structure of the ambient Smale space therefore enters essentially into the analysis.

        In the space $S(X,f)$ of subsystems of a Smale space $(X,f)$, equipped with the Hausdorff topology, we show that a rather degenerate class of subsystems is generic. The typical properties are summarized in the following theorem. 
        \begin{theorem}\label{thm:Ssummary}
            Let $(X,f)$ be an irreducible Smale space. The typical subsystem in $S(X,f)$ is topologically conjugate to a countable sofic subshift which has at most linear complexity. Moreover, if it contains more than a single periodic orbit, then it is not topologically transitive.
        \end{theorem}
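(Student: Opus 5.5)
The plan is to exhibit an open dense family $\mathcal{N}\subset S(X,f)$ of ``NMC'' subsystems, in the spirit of Pavlov--Schmieding, and to check that every member of $\mathcal{N}$ has the stated properties. The candidate family consists of subsystems $Y$ that are isolated points of their own ``shadowing neighbourhood'' in a strong sense. Concretely, fix an expansivity constant $\epsilon_0$ and a shadowing constant $\delta$ for $\epsilon_0/2$. Call $Y$ \emph{NMC} if the following holds:
\begin{itemize}
\item $Y$ is the set of all $f$-orbits that $\delta'$-shadow, for some $\delta'<\delta$, a finite set of periodic orbits $\mathcal{O}_1,\dots,\mathcal{O}_k$ together with finitely many allowed one-way transitions between them, with no transition cycles (a graph that is a DAG on the periodic orbits);
\item every such concatenation is realised in $Y$.
\end{itemize}
By expansivity and shadowing, such a $Y$ is conjugate to the subshift of bi-infinite paths in a finite labelled graph that is a union of loops (the $\mathcal{O}_i$) joined by acyclic connecting paths. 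Such a subshift is sofic and countable. Its complexity is at most linear, since a word of length $n$ is determined by at most boundedly many switching times (at most $k-1$, one per loop passage). It is transitive only if $k=1$, since without cycles among distinct loops no point can have both past and future dense.

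Density is the first step. Let $Z\in S(X,f)$ and $\epsilon>0$. By the specification/closing properties of an irreducible Smale space, approximate $Z$ in the Hausdorff metric by a finite union of periodic orbits $P_1,\dots,P_m$ that are $\epsilon$-dense in $Z$; finite periodic subsystems are dense in the hyperspace by Bowen's closing lemma. A finite union of periodic orbits is already of the above form with no transitions. We still have to perturb it so that it becomes a point of the open set. The remedy is to replace each periodic orbit by a slightly longer periodic orbit whose period word is ``primitive enough'' that nearby subsystems are forced to lie near it.

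Openness is the main obstacle. We must show that every $Y'$ Hausdorff-close to $Y$ is again NMC, and mere Hausdorff closeness does not force $Y'$ to be made of shadowing orbits of the same $\mathcal{O}_i$. The approach I would try first is to pass to a Markov partition. By Bowen, there is a finite-to-one factor map $\pi:\Sigma_A\to X$ from an irreducible SFT that is injective on a residual set. Choose the periodic orbits so that they avoid the boundary of the partition; this is possible by density. Then small Hausdorff perturbations correspond to small perturbations of symbolic codings. In the symbolic setting, a subsystem close to $Y$ has all its $N$-words among the $N$-words of $Y$, and every $N$-word of $Y$ appears. Pavlov--Schmieding's argument then shows that if $N$ exceeds the maximal length of transition words, the set of subshifts with exactly that $N$-language is open. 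With the language fixed, the members of the family are precisely the graph subshifts above, up to allowing subsystems that use only part of the graph; any such subsystem is still NMC. The delicate point is that the Hausdorff topology on $S(X,f)$ must control the $N$-language of the coding. That control uses the fact that the orbits of $Y$ keep a definite distance from $\partial$ of the partition. For $Y'$ close enough, all orbits of $Y'$ then lie in a neighbourhood on which the coding is locally constant, and this transfers openness. Combining the density and openness steps gives an open dense set of subsystems with all properties claimed in Theorem~\ref{thm:Ssummary}.
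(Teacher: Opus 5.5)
Your proposal has two genuine gaps. The first is in the family itself. A DAG on the periodic orbits with \emph{all} concatenations realised allows a loop $\mathcal{O}_2$ that has both an incoming transition (from $\mathcal{O}_1$) and an outgoing one (to $\mathcal{O}_3$). Then $Y$ contains, for every $m$, an orbit that shadows $\mathcal{O}_1$ in the past, then $\mathcal{O}_2$ for $m$ periods, then $\mathcal{O}_3$. Such a $Y$ has infinitely many orbits and complexity of order $n^2$; your own count of up to $k-1$ switching times gives $O(n^{k-1})$, not $O(n)$. It is also not isolated. In the symbolic model, take the closure of the points $a^\infty b^m c^\infty$, $m\geq 0$. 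Keeping only the orbits with $m<N$ (together with $a^\infty b^\infty$, $b^\infty c^\infty$ and the fixed points) gives a different subsystem with the same $N$-language, so openness fails. Linear complexity and isolation need exactly the ``no middle cycles'' condition of Definition~\ref{def:NMC}: every periodic orbit of $Y$ is only a source or only a sink. Equivalently, each aperiodic point is biasymptotic to two \emph{distinct} periodic orbits and $\alpha(y)\neq\omega(y')$ for aperiodic $y,y'$, so each point makes at most one switch.

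The second gap is more serious: your density step is false, because finite unions of periodic orbits are not dense in $S(X,f)$. Suppose $Z_n\to Z$ with each $Z_n$ a finite union of periodic orbits. Replacing the points of a periodic orbit in $Z_n$ by nearby points of $Z$ produces closed $\eps$-chains inside $Z$, so $f|_Z$ must be chain recurrent. Now take the closure of a single orbit heteroclinic from a periodic orbit $P$ to a different periodic orbit $Q$ (in the full shift, $\overline{O(0^\infty.1^\infty)}$). This set is not chain recurrent. It is NMC, hence an isolated point of $S(X,f)$ by Lemma~\ref{lem:NMCdiscrete}. Any dense family must therefore contain it, and no perturbation of periodic unions reaches it; the closing lemma only sees chain-recurrent behaviour.

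The paper gets density differently. NMC subshifts are dense in $S(\Sigma,\sigma)$ (Pavlov--Schmieding), so their images are dense in $S(X,f)$ because $\pi_*$ is a continuous surjection. These images may contain homoclinic orbits (cat map example), so Lemma~\ref{lem:NMCdense} repairs them by shadowing. It first replaces homoclinic orbits by periodic ones, then shadows simple cycles and simple barbells in the graph of heteroclinic connections.

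Your boundary-avoidance idea for openness can be made to work when $Y$ misses the boundary $\partial\mathcal{R}$ of the Markov partition. You would then also need density of such boundary-avoiding NMC subsystems, including their transition orbits, which you do not address. The paper sidesteps this by pulling back: every $Z\in\pi_*^{-1}(Y)$ is shown to be an NMC subshift. If $Y_n\to Y$ with $Y_n\neq Y$, a limit point of $\pi^{-1}(Y_n)$ would be a non-isolated element of $\pi_*^{-1}(Y)$, which contradicts discreteness of NMC subshifts.
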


        \begin{theorem}\label{thm:Tsummary}
            Let $(X,f)$ be an irreducible Smale space. The typical subsystem in $\overline{T'(X,f)}$ is minimal, uniquely ergodic, and has zero topological entropy. However, it has infinite polynomial slow entropy, and stretched-exponential slow entropy equal to 1.

            Moreover, the family of subsystems conjugate to regular Toeplitz subshifts that factor onto the universal odometer is dense in $\overline{T'(X,f)}$.
        \end{theorem}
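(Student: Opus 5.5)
The plan is to reduce to the symbolic setting of Pavlov--Schmieding via a Markov partition and then push their genericity statements for subshifts forward, using the hyperbolic structure to control residuality. Since $(X,f)$ is irreducible, we fix a finite-to-one factor map $\pi\colon (\Sigma_A,\sigma)\to(X,f)$ from an irreducible shift of finite type that is injective off a closed nowhere dense set. Its image $\pi(\Sigma_A\setminus D)$ of the injectivity locus is a dense $G_\delta$ of points with unique preimage. The map $Y\mapsto \pi(Y)$ from subshifts of $\Sigma_A$ to $S(X,f)$ is continuous in the Hausdorff topology, and it sends transitive subsystems to transitive ones. The problem is that this map need not be open, so residual sets are not automatically carried to residual sets. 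The remedy is to work directly in $S(X,f)$ and prove each property is $G_\delta$ and dense there, using $\pi$ only to build dense families.

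The first step is to show $\overline{T'(X,f)}$ is a Baire space; it is closed in the compact space $S(X,f)$. The $G_\delta$ conditions are then set up as follows.
\begin{itemize}
\item Minimality: for each $\eps>0$ there is $N$ such that every orbit segment of length $N$ is $\eps$-dense. This is an open condition in a suitable sense, and we take the intersection over $\eps=1/k$.
\item Unique ergodicity: for a countable dense set of continuous $g$ and each $k$, there is $N$ with all Birkhoff averages of length $N$ of $g$ within $1/k$ of one another.
\item Zero entropy: $\frac1n\log N(n,\eps,Y)<1/k$ for some $n$, which is open by upper semicontinuity of covering numbers.
\item Slow entropy: infinite polynomial slow entropy means the $\limsup$ of $\log N(n,\eps)/\log n$ is infinite. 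Stretched-exponential slow entropy $1$ means the $\limsup$ of $\log\log N(n,\eps)/\log n$ equals $1$. Each is a countable intersection of conditions of the form ``for some $n>m$, $N(n,\eps)>\dots$'', which is open. The upper bound $\le 1$ is the subtle part, since it is a $\limsup$ condition and is not obviously $G_\delta$. I would get it instead from Toeplitz density by taking $\lim\inf$ versions or by using exact covering counts.
\end{itemize}

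For density I use Toeplitz approximations. Take $Y\in T'(X,f)$ with a transitive point $x$ and $\eps>0$, and choose a long orbit segment $x,\dots,f^{L}x$ that is $\eps$-dense in $Y$. By specification/shadowing for irreducible Smale spaces (closing lemma plus the spectral decomposition into mixing pieces for the period), we concatenate pseudo-orbits built from this segment and its $\delta$-close variants in a hierarchical Toeplitz pattern: level $j$ blocks are periodic with period $p_j$, $p_j\mid p_{j+1}$, and holes are filled at later levels. Shadowing gives a true orbit whose orbit closure is $\eps$-Hausdorff close to $Y$. Expansivity makes the coding injective, so the closure is conjugate to the regular Toeplitz subshift given by the pattern, with rapidly decreasing hole density for regularity. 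Choosing the $p_j$ to exhaust all integers up to divisibility gives a factor onto the universal odometer. Regular Toeplitz shifts are minimal, uniquely ergodic and of zero entropy, which yields density of those three properties. By also inserting, at sparse levels, freely chosen words from a positive-entropy piece while keeping regularity (hole density tends to $0$ but slowly), we make $N(n,\eps)$ exceed $\exp(n^{1-\delta})$ along a subsequence. We also keep it below $\exp(n^{1+\delta})$ trivially, and together these give density for the slow-entropy conditions.

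The main obstacle is the shadowing step. We need the shadowing orbit's closure to be conjugate to the symbolic Toeplitz model rather than merely close to $Y$, and the conjugacy must persist with the prescribed periodic structure so that the resulting subsystem is genuinely Toeplitz. I would handle this by performing the construction inside $\Sigma_A$ on the cylinder words coding the segment, which is exactly the Pavlov--Schmieding construction. I would then project by $\pi$, choosing the words to avoid the nowhere dense non-injectivity set $D$ (using that its complement is dense and open at finite levels) so that $\pi$ restricted to the Toeplitz subshift is injective, and hence a conjugacy.
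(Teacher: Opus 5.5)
Your overall strategy matches the paper's: prove each property is $G_\delta$ directly in $S(X,f)$, and use $\pi$ only to produce dense families. Your $G_\delta$ characterizations of minimality, unique ergodicity and zero entropy are acceptable variants of the paper's.

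\textbf{The gap.} The density step for the Toeplitz conjugacy class and for the slow-entropy sets does not go through. Both rest on choosing the Pavlov--Schmieding Toeplitz subshift $Z\subseteq\Sigma_A$ so that $\pi|_Z$ is injective, by ``avoiding the nowhere dense non-injectivity set $D$ at finite levels.'' That justification is wrong:
\begin{itemize}
\item For a Markov coding, $D$ is in general not closed. Non-injectivity comes from orbits hitting the partition boundary, so $D$ is an invariant $F_\sigma$ set, typically dense.
\item Its complement is therefore a dense $G_\delta$, not an open set, so no condition on finitely many words keeps a subshift off $D$.
\item The Pavlov--Schmieding density theorem gives you no control over this extra requirement.
\end{itemize}
Injectivity is not hopeless. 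For minimal $Z$, the stable boundary is forward invariant and the unstable boundary backward invariant, so $\pi|_Z$ can fail to be injective only if $\pi(Z)$ lies inside one of them. You would still have to show such $Z$ can be avoided near every point of $\overline{T'(X,f)}$, and the proposal contains no such argument.

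Your direct shadowing route fails for a related reason. Expansivity does not make the pattern-to-point coding injective; it \emph{identifies} patterns whose pseudo-orbits stay uniformly $\delta$-close. So swapping $\delta$-close variants of one segment yields the same orbit. Without injectivity, $\pi(Z)$ is only a finite-to-one factor of $Z$. That suffices for minimality, unique ergodicity and zero entropy, but not for membership in the Toeplitz class or for lower bounds on complexity.

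\textbf{The missing idea.} Injectivity is unnecessary; what you need is that these properties survive finite-to-one factor maps into an expansive system. The paper shows $\pi(Z)$ is conjugate to some, possibly different, regular Toeplitz subshift over the universal odometer, in four steps:
\begin{itemize}
\item $\pi(Z)$ is minimal and expansive, hence symbolic by Ma\~n\'e.
\item By Downarowicz--Durand, it is an almost one-to-one extension of an odometer factor of the odometer of $Z$.
\item Finite-to-oneness gives the induced homomorphism a finite, hence trivial, kernel in the torsion-free universal odometer.
\item Regularity passes through a window-$(2r+1)$ block code via $1-D(\Per_{p_m}(y))\leq(2r+1)\bigl(1-D(\Per_{p_m}(z))\bigr)$, where $z\in Z$ and $y$ is its image.
\end{itemize}
For slow entropy, the paper compares $C(Z,n)$ with the complexity of a symbolic model of $\pi(Z)$, up to a multiplicative constant and a bounded shift in $n$. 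It does this via Putnam's lift to $s$/$u$-resolving maps and the complexity lemma for left/right-closing codes, which transfers the Pavlov--Schmieding oscillation results.

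\textbf{Two smaller fixes.}
\begin{itemize}
\item Openness of ``$N(n,\eps)>q$'' holds for separated-set counts, as in the paper. It fails for the upper semicontinuous covering numbers you used for zero entropy.
\item $h_{\mathrm{top},e^{n^\chi}}\leq 1$ holds for \emph{every} subsystem, since $\log S_Z(n,\eps)=O(n)$. So there is no $G_\delta$ issue there, and Toeplitz density alone could never supply residuality anyway.
\end{itemize}
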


        We obtain the result about slow entropy using the word complexity of a suitable symbolic presentation of a typical subsystem.

        Lastly, restricting further to the space $TT'(X,f)$ of infinite and totally transitive subsystems, we obtain similar dynamical properties. Note that the results are established in the setting of a mixing Smale space to avoid degenerate subspaces.

         \begin{theorem}\label{thm:TTsummary}
            Let $(X,f)$ be a mixing Smale space. The typical subsystem in $\overline{TT'(X,f)}$ is minimal, uniquely ergodic, and has zero topological entropy. The unique invariant measure of such a subsystem is weakly mixing.

            Moreover, the conjugacy class of a Sturmian subshift is dense in $\overline{TT'(X,f)}$.
        \end{theorem}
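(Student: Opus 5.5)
The plan is to transfer the Pavlov--Schmieding results for $\overline{TT'}$ of a mixing SFT to the mixing Smale space $(X,f)$ via a finite-to-one symbolic presentation. We work directly in $S(X,f)$ and use three features of the ambient hyperbolic structure: expansivity, shadowing, and specification for mixing Smale spaces.

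\textbf{Step 1 (density of totally transitive periodic-free approximants, and of Sturmian conjugates).} Let $Y\in TT'(X,f)$ and $\eps>0$. Choose a long $\delta$-pseudo-orbit segment in $Y$ that is $\eps$-dense in $Y$. Using total transitivity of $Y$ and mixing of $X$, close it up with gaps of two coprime lengths $p$ and $p+1$. Fix a Sturmian sequence $s\in\{0,1\}^{\Z}$. Concatenate the $Y$-block with a gap of length $p$ when $s_n=0$ and of length $p+1$ when $s_n=1$; this gives a bi-infinite pseudo-orbit. By shadowing, it is traced by a unique point $x$, and expansivity gives uniqueness. Set $Z=\overline{\{f^n x\}}$. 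The coding map sending $f^n x$ to the pseudo-orbit label is continuous by expansivity and local product structure. Since the blocks are chosen rigidly, $Z$ is conjugate to a substitution-type tower over the Sturmian shift. To obtain an actual Sturmian conjugate, I would instead use the Sturmian shift as a skeleton: take two fixed orbit segments $u_0,u_1$ of $X$, of coprime lengths, that stay $\eps$-close to $Y$ along long portions, and shadow the concatenation $u_{s_0}u_{s_1}\cdots$. Then $Z$ is the image of the Sturmian shift under a "constant-block substitution" map. A Sturmian sequence with substituted blocks is again Sturmian-conjugate after recoding, because a $2$-block morphism of coprime lengths gives a Kakutani tower over an irrational rotation, which is still a Sturmian system for a suitable rotation. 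Hausdorff closeness $d_H(Z,Y)<\eps$ follows because every shadowing orbit stays near $Y$, and every point of $Y$ is approached by the dense segment. Both $u_0$ and $u_1$ must contain that dense segment.

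\textbf{Step 2 ($G_\delta$ properties).} Minimality, unique ergodicity, zero entropy, and weak mixing of the unique measure are each $G_\delta$ in $S(X,f)$. For minimality, use the standard expression: for every $m$ there is $N$ such that every orbit segment of length $N$ is $1/m$-dense, and this is open in the Hausdorff metric. Unique ergodicity is the intersection over a countable dense set of continuous functions $g$ of the open conditions "Birkhoff averages of $g$ over length $N$ vary by $<1/m$". Zero entropy uses $h=\inf_n \frac1n\log N(n,\eps_0)$ with $\eps_0$ an expansive constant, together with upper semicontinuity of $Y\mapsto N(n,\eps_0)$. Weak mixing uses the spectral characterization: there is no eigenvalue $e^{2\pi i\theta}$, uniformly over $\theta$, in terms of $\frac1N\sum|\int g\circ f^n\, \overline{g}\,d\mu|$ for $g$ in a countable family. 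Under unique ergodicity these become uniform orbit averages, hence open-type conditions. Intersected with the unique ergodicity set, this is $G_\delta$ relative to $\overline{TT'}$.

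\textbf{Step 3 (density).} Sturmian systems are minimal, uniquely ergodic, and of zero entropy, so by Step 1 the first three $G_\delta$ sets are dense; Baire then gives residuality. Sturmian measures are not weakly mixing, since they have rotation eigenvalues. So for weak mixing I would instead use, in Step 1, a skeleton given by a weakly mixing uniquely ergodic minimal subshift with positive-length block coding, for instance a Chacon-like shift. I expect this to be the main obstacle: one must ensure that weak mixing survives the block-substitution/tower construction. My approach is to pick block lengths $p$ and $p+1$ and verify, via the eigenvalue equation on the tower, that any eigenvalue would have to be an eigenvalue of the base twisted by lengths. A Chacon-type base with two coprime return times rules this out. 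Finally, topological weak mixing follows from measure-theoretic weak mixing plus unique ergodicity and minimality, since the only invariant measure has full support.
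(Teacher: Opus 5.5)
Your proposal has genuine gaps on the density side. Most seriously, density of $\mathrm{WM}(X,f)$ is never proved. You flag it yourself as the main obstacle, and you only assert that a tower with heights $p,p+1$ over a Chacon-type base is weakly mixing. That is not automatic. A tower over a weakly mixing base can acquire eigenvalues; already for constant height $p$, every $p$-th root of unity is one. So you would have to carry out an actual rank-one eigenvalue argument. For example, $B_{n+1}=B_nB_nCB_n$ gives $\lambda^{|B_n|}\to 1$ and $\lambda^{|B_n|+|C|}\to 1$, hence $\lambda^{|C|}=1$, and $p$ must then be chosen so that this forces $\lambda=1$.

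Step 1 also has two unproved links. (a) The claim that a tower with heights $p,p+1$ over a Sturmian shift is again Sturmian-conjugate is not justified by ``a Kakutani tower over an irrational rotation is still Sturmian''; that is false as a general principle. It holds here only because the height function is constant on the cylinders $[0],[1]$, hence cohomologous to a constant over the rotation, and coprimality excludes rational eigenvalues. This needs an argument. (b) Conjugacy of $Z$ with the tower requires the shadowing map to be injective, i.e.\ distinct tower points must produce pseudo-orbits more than $2\eps$ apart at some time. This is a recognizability/separation property you do not establish, and it is not automatic, since you make $u_0$ and $u_1$ share a long common segment. Saying the inverse coding map is continuous ``by expansivity'' presupposes it.

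The missing idea is the one you announce in your first sentence and then never use: the symbolic cover. The paper shows that $\pi_*$ maps $\overline{TT'(\Sigma,\sigma)}$ continuously onto $\overline{TT'(X,f)}$. Hence the dense families from Pavlov--Schmieding push forward to dense families in $\overline{TT'(X,f)}$: the conjugacy class of a Sturmian subshift (their Theorem 6.4), and the uniquely ergodic subshifts with weakly mixing measure (their Theorem 6.8). Factors preserve minimality and zero entropy, factors of uniquely ergodic systems are uniquely ergodic, and push-forwards of weakly mixing measures are weakly mixing. This gives every density statement with no construction at all. To upgrade ``finite-to-one factor of a Sturmian shift'' to ``conjugate to a Sturmian shift'', the paper uses Durand's result: a factor of a Sturmian shift that is not conjugate to it is a factor of its maximal equicontinuous factor, an irrational rotation, which is impossible for an infinite expansive system. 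The same fact would also repair (b) in your own construction, since $Z$ is an infinite expansive factor of the tower.

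Your $G_\delta$ arguments for minimality, unique ergodicity and zero entropy are fine. For weak mixing, the conditions are not open in $S(X,f)$ as you suggest, because $\mu_Y$ is only defined on $\mathrm{UE}(X,f)$. What works is continuity of $Y\mapsto\mu_Y$ on $\mathrm{UE}(X,f)$, together with existence of the Ces\`aro limit. This gives a $G_\delta$ inside the $G_\delta$ set $\mathrm{UE}(X,f)$.
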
      

        The paper is organized into five sections. In Section \ref{sec:DefinitionSetting} we collect the definitions needed for the arguments presented throughout the article, and we precisely define the setting of this work. Section \ref{sec:AllSubsystems} treats the space of all subsystems $S(X,f)$, where we make essential use of shadowing and expansivity of the ambient space to prove Theorem \ref{thm:Ssummary}. Section \ref{section:toptransitive} deals with the space $\overline{T'(X,f)}$ of infinite transitive subsystems and focuses on the proof of Theorem \ref{thm:Tsummary}. Genericity of the topological properties is obtained by suitable open-set characterizations of these properties and by analyzing their behavior under the finite-to-one factor map from a subshift of finite type onto the ambient space. Complexity and slow entropy results require some symbolic tools that we develop there. In Section \ref{sec:totallytransitive} we focus on the space $\overline{TT'(X,f)}$ of infinite totally transitive subsystems, where we first establish results on weakly mixing measures and then use them to obtain the genericity results for subsystems stated in Theorem \ref{thm:TTsummary}.

        In each of the sections, the existence of a symbolic cover given by a Markov partition is essential. The induced map from the space of subshifts of this cover onto $S(X,f)$ allows us to transfer results proved in \cite{PavlovSchmieding} for the symbolic case to the Smale space setting. However, a $G_\delta$ characterization of a dynamical property is not preserved \emph{a priori}, since the symbolic cover is not necessarily injective. The same problem occurs with some of the properties we study, which require intrinsic arguments to obtain the desired results. 

        \section*{Acknowledgments}

        The author would like to thank Scott Schmieding for proposing this problem, and for the helpful discussions and comments that arose during the preparation of this work.

    \section{Definitions, setting and preliminaries}\label{sec:DefinitionSetting}
        \subsection{Definition of a Smale space and related properties}
        \begin{definition}[Cf.~\cite{Ruelle} or~\cite{PutnamMemoir}] A Smale space, also known as an $\mathscr{L}$-hyperbolic homeomorphism or a hyperbolic homeomorphism, is a dynamical system $(X,f)$ consisting of a homeomorphism $f\colon X \to X$ on a compact metric space $(X,d)$ with the following properties:

            There exist constants $\eps_X>0$, $\lambda\in(0,1)$ and a continuous map $[\cdot\,,\cdot]\colon \Delta_{\eps_X}\to X$ on $\Delta_{\eps_X}\coloneq \{(x,y)\in X\times X \mid d(x,y)<\eps_X\}$ that satisfies the sets of axioms of a Local Product Structure \textup{(LPS)}:
            \begin{enumerate}[label=(LPS\arabic*),ref=(LPS\arabic*), font=\upshape]
                \item \label{LPS1} $[x,x]=x$ for every $x\in X$;
                \item \label{LPS2} $[x,[y,z]]=[x,z]$ and $[[x,y],z]=[x,z]$ for $x,y,z\in X$ whenever both sides are defined;
                \item \label{LPS3} $[f(x),f(y)]=f([x,y])$ for $x,y\in X$ whenever both sides are defined;
            \end{enumerate}
             and the axioms of Uniform Contraction and Expansion \textup{(UCE)}:
            \begin{enumerate}[label=(UCE\arabic*),ref=UCE\arabic*, font=\upshape]
                \item[(UC)] $d(f^n(y),f^n(z))\leq \lambda^n d(y,z)$ for $n>0$ if $[y,x]=x=[z,x]$ for some $x\in X$;
                \item[(UE)] $d(f^{-n}(y),f^{-n}(z))\leq \lambda^n d(y,z)$ for $n>0$ if $[x,y]=x=[x,z]$ for some $x\in X$.
            \end{enumerate}
        \end{definition}

        For $0<\delta\leq \eps_X$ define the local stable and local unstable sets at $x$ of size $\delta$ by
            \[X^s(x,\delta)\coloneq \{y \in X \mid d(x,y)<\delta,\,[x,y]=y\},\]
            \[X^u(x,\delta)\coloneq \{y\in X \mid d(x,y)<\delta,\,[y,x]=y\}.\]
        The global stable and unstable sets at $x$ are defined as usual by 
        \[X^s(x)\coloneq \{y\in X \mid d(f^n(x),f^n(y))\to 0,\, n\in\mathbb{N}\},\] 
        \[X^u(x)\coloneq \{y\in X \mid d(f^{-n}(x),f^{-n}(y))\to 0,\, n\in\mathbb{N}\}.\]
        The set of axioms \textup{(LPS)} implies, for each $x\in X$, the existence of a homeomorphism from $X^s(x,\delta)\times X^u(x,\delta)$ to a neighborhood of $x$ for $\delta$ independent of $x$ and small enough. Hence we can locally describe points in terms of stable and unstable coordinates. 
        Additionally, the set of axioms \textup{(UCE)} guarantees uniform contraction along stable sets and uniform expansion along unstable sets.

        The definition presented here is a special case of a type of system known as topologically Anosov, a topic treated extensively in \cite{AokiHiraide}. Under the more general assumptions of a topologically Anosov system, it can be shown that Smale spaces exhibit the expansivity property and the shadowing property (or pseudo-orbit tracing property POTP). Since both are frequently used in our proofs, we recall their definitions below. 
         
        \begin{definition}[Expansivity]
            A topological dynamical system $(X,f)$ on a compact metric space $(X,d)$ is expansive if there exists $c_f>0$ such that
            \[d(f^n(x),f^n(y))\leq c_f \text{ for every }n\in\mathbb{Z} \Longrightarrow x=y.\]  
        \end{definition}

        \begin{definition}[Shadowing Property]
            Let $S$ be a finite or infinite interval of $\mathbb{Z}$. Given $\eps>0$ there exists $\alpha>0$ so that every $\alpha$-pseudo-orbit $\{x_k\}_{k\in S}$ in $X$ is $\eps$-shadowed by a point $x\in X$, i.e.,
                \[d(f(x_k),x_{k+1})<\alpha \,\forall k, k+1\in S \Longrightarrow \exists x\in X \text{ so that } d(f^k(x),x_k)<\eps \,\forall k\in S.\]
        \end{definition}

        These two essential properties of a Smale space allow the construction of the primary tool used to prove the main results, namely, the existence of Markov partitions of arbitrarily small diameter, and of subshifts of finite type (SFT) that code the Smale space according to the selected Markov partition. For the definition of a Markov partition and a proof of the construction of such partitions, see \cite{AokiHiraide} for topologically Anosov maps or \cite{Bowen} for the differentiable setting.

        We need the following definition for the complexity results in Section \ref{section:toptransitive}.
        \begin{definition}\label{def:suresolving}
            Let $(X,f)$ and $(Y,g)$ be Smale spaces and let $\pi:X\to Y$ be a map that intertwines the dynamics of $(X,f)$ and $(Y,g)$, i.e., $\pi\circ f=g\circ \pi$. We say that $\pi$ is $s$-resolving (or $u$-resolving) if, for any $x\in X$, its restriction to $X^s(x)$ (or $X^u(x)$, respectively) is injective.
        \end{definition}

        \subsection{Definitions in topological dynamics and ergodic theory}
        We use some standard terminology from topological dynamics and ergodic theory throughout the article. Although it is probably familiar to the reader, we recall the relevant concepts here for completeness.
        
        First, we define irreducibility, a term we use interchangeably with topological transitivity throughout the text. 

        \begin{definition}\label{def:toptrans}
            A topological dynamical system $(X,f)$ is topologically transitive (or irreducible) if for every pair of nonempty open sets $U$ and $V$ there exists $n>0$ such that $f^n(U)\cap V\neq \emptyset$.
        \end{definition}

        The system $(X,f)$ is said to be point transitive if there exists a transitive point. That is, there exists $x\in X$ such that $\{f^n(x)\}_{n\in\N}$ is dense in $X$.
        This definition of point transitivity differs from the definition of topological transitivity given above. However, as proved in \cite{TopTrans}, both definitions are equivalent whenever $X$ is perfect and non-meager. In Section \ref{section:toptransitive}, we consider subsystems satisfying these properties, thus we use both definitions interchangeably.

        In our setting, we will mainly consider irreducible systems. However, we require the stronger assumption of topological mixing in Section \ref{sec:totallytransitive}.
        \begin{definition}\label{def:topmixing}
             A topological dynamical system $(X,f)$ is topologically mixing if for every pair of open sets $U,V\neq\emptyset$, there exists $N\in\N$ such that $f^n(U)\cap V\neq\emptyset$ for every $n\geq N$.
        \end{definition}

        A topologically transitive Smale space $(X,f)$ admits a spectral decomposition: $X$ is the union of $p$ disjoint compact sets $\{X_k\}_{k=1}^p$, which are cyclically permuted by $f$, i.e., $f(X_k)=X_{k+1}$, and such that $f^p\left|_{X_k}\right.$ is topologically mixing. A proof of this fact in the differentiable setting can be found in \cite{Bowen}.
        If $(X,f)$ is topologically mixing, the spectral decomposition is trivial and $p=1$. This is the connection between transitivity and mixing in the Smale space setting.

        There is a third important definition that lies between transitivity and mixing, which is central in Section \ref{sec:totallytransitive}: total transitivity. 
        \begin{definition}
            Let $(X,f)$ be a dynamical system. We say that $(X,f)$ is totally transitive if $(X,f^n)$ is transitive for every $n\in\N$.
        \end{definition}

        The orbit of $x\in X$ is defined by $O(x)\coloneq\{f^n(x) \mid n\in \Z\}$, its forward orbit is $O^+(x)\coloneq\{f^n(x) \mid n\in\N_0\}$ and its backward orbit is $O^-(x)\coloneq \{f^{-n}(x): n\in\N\}$. The $\alpha$-limit of $x\in X$ is defined by $\alpha(x)\coloneq \bigcap_{n\in\N}\overline{O^-(f^{-n}(x))}$ and its $\omega$-limit by $\omega(x)\coloneq\bigcap_{n\in\N}\overline{O^+(f^n(x))}$. The set of periodic points of $(X,f)$ is denoted by $\Per(X)$ if the map $f$ is implied. A point that is not periodic is called aperiodic.

        A factor map between topological dynamical systems $\pi\colon(X,f)\to (Y,g)$ is a continuous and surjective map that intertwines the dynamics of $X$ and $Y$, i.e., $\pi\circ f=g\circ\pi$. If such a $\pi$ exists, we say that $(Y,g)$ is a factor of $(X,f)$, or equivalently, that $(X,f)$ is an extension of $(Y,g)$. A factor map $\pi\colon X\to Y$ is a topological conjugacy if $\pi$ is also a homeomorphism. If a conjugacy between $(X,f)$ and $(Y,g)$ exists, we say that $(X,f)$ and $(Y,g)$ are topologically conjugate. 

        A factor map $\pi\colon X\to Y$ is finite-to-one if $|\pi^{-1}(y)|<\infty$ for every $y\in Y$. In our setting, finite-to-one maps are bounded-to-one, i.e., $\sup_{y\in Y}|\pi^{-1}(y)|<\infty$, and we use both notions indistinguishably. The map $\pi$ is called almost one-to-one if the set $\{y\in Y \mid |\pi^{-1}(y)|=1\}$ is a $G_\delta$ dense subset of $Y$.

        Given a dynamical system $(X,f)$ on a compact metric space $(X,d)$, the $n$-th Bowen metric $d_n$ on $X$ is defined by $d_n(x,y)\coloneq\max_{0\leq j<n}d(f^j(x),f^j(y))$.

        For $n\in\N$ and $r>0$ fixed, we say that a set $E\subseteq X$ is an $(n,r)$-separated set of $X$ if $d_n(x,y)>r$ for every distinct pair $x,y\in E$. If $X$ is compact, the maximal cardinality of an $(n,r)$-separated set is well defined and denoted by $S_X(n,r)$.

        The topological entropy of a dynamical system $(X,f)$ is defined by 
            \[h(f,X)=\lim_{\eps\to 0^+}\limsup_{n\to\infty}\dfrac{\log S_X(n,\eps)}{n}.\] 
        If $(X,f)$ is expansive, there exists $\eps\in(0,c_f)$ such that 
            \[h(f,X)=\limsup_{n\to\infty}\dfrac{\log S_X(n,\eps)}{n}.\]

        A measurable dynamical system $(X,\mathcal{B},f,\mu)$ is a compact metric space $X$ together with its Borel $\sigma$-algebra $\mathcal{B}$, a measurable function $f\colon X\to X$ and an $f$-invariant Borel probability measure $\mu$. For a triple $(X,\mathcal{B},f)$ as before, we define $\mathcal{M}_f(X)$ as the space of $f$-invariant Borel probability measures on $(X,\mathcal{B})$. This space is compact and metrizable with the weak-\textasteriskcentered\ topology.
        \begin{definition}
            A measurable dynamical system $(X,\mathcal{B},f,\mu)$ is weakly mixing if, for any measurable sets $A,B\in\mathcal{B}$, 
                \[\lim_{n\to\infty}\dfrac{1}{n}\sum_{j=0}^{n-1}\left|\mu(f^{-j}(A)\cap B)-\mu(A)\mu(B)\right|=0.\]
        \end{definition}

        \begin{definition}
            Let $(X,\mathcal{B},f,\mu)$ be a measurable dynamical system. The measure $\mu$ is called periodic if it is supported on a periodic orbit.
        \end{definition}

        \subsection{Definitions in symbolic dynamics}

        Let $m\geq 2$ be a fixed integer and consider the finite set $\mathcal{A}=\{0,1,\ldots,m-1\}\subseteq \N_0$, which we call the alphabet. Consider the set of sequences \(\Sigma_m\coloneq\{x=(x_n)_{n\in\Z} \mid x_n\in\mathcal{A}\;\forall n\in \Z\}\) with the metric given by
                    \[d(x,y)=2^{-\max\{i \in \N \mid x_j=y_j \forall |j|\leq i\}}.\]
        This metric induces the product topology on $\mathcal{A}^\Z$, which implies that $(\Sigma_m,d)$ is a compact metric space. 
        We define the left-shift map $\sigma\colon\Sigma_m\to\Sigma_m$, or simply the shift map, by $(\sigma(x))_{n}=x_{n+1}$, which is a homeomorphism. The invertible topological dynamical system $(\Sigma_m,\sigma)$ is called the full shift in $m$ symbols. 

        A subshift $(\Sigma,\sigma)$ is a subsystem of $(\Sigma_m,\sigma)$. Namely, $\Sigma$ is a closed subset of $\Sigma_m$ and it is $\sigma$-invariant, i.e., $\sigma(\Sigma)=\Sigma$.

        Denote by $\mathcal{A}^n$ the set of $n$-strings with symbols in $\mathcal{A}$, and by $\mathcal{A}^*$ the free monoid of all finite strings, i.e., $\mathcal{A}^*\coloneq\bigsqcup_{n\geq 0}\mathcal{A}^n$. A word $w$ is an element of $\mathcal{A}^*$, and its length, denoted by $|w|$, is $n$ if $w\in\mathcal{A}^n$.
        Given a word $w$, we can define a cylinder set associated to it by \(C_w=\{x\in\Sigma_m \mid  x_{[0,|w|)}=w\},\) where $x_{[0,n)}$ denotes the $n$-string $x_0x_1\ldots x_{n-1}$. The cylinder sets and their images and preimages under $\sigma$ form a basis for the topology.

        Given a subshift $(\Sigma,\sigma)$, we say that a word $w$ is $\Sigma$-admissible if $C_w\cap\Sigma\neq\emptyset$. A word $w$ is $\Sigma$-forbidden if it is not $\Sigma$-admissible. The language of $\Sigma$, denoted by $\mathcal{L}(\Sigma)$, is the set of all $\Sigma$-admissible words, i.e.,
            \[\mathcal{L}(\Sigma)=\left\{w\in\mathcal{A}^* \mid \exists x\in \Sigma \text{ such that } x_{[0,|w|)}=w\right\}.\] 
        The $n$-th language of $\Sigma$, denoted by $\mathcal{L}_n(\Sigma)$, is the subset of all $\Sigma$-admissible words of length $n$, i.e., $\mathcal{L}_n(\Sigma)=\mathcal{L}(\Sigma)\cap \mathcal{A}^n$.

        \begin{definition}[Word complexity]
         Let $(\Sigma,\sigma)$ be a subshift. Its word complexity function is defined by
            \[C(\Sigma,\cdot)\colon \N\to\N,\quad C(\Sigma,n)\coloneq |\mathcal{L}_n(\Sigma)|.\]
        \end{definition}
        
        Consider a set of words $\mathcal{F}\subseteq \mathcal{A}^*$. We can define a subshift $\Sigma_\mathcal{F}$ as the subset of $\Sigma_m$ whose set of forbidden words is exactly $\mathcal{F}$. Every subshift can be obtained in this way with a countable $\mathcal{F}$. We say that $\Sigma$ is a subshift of finite type (SFT) if there exists a finite set of words $\mathcal{F}\subseteq \mathcal{A}^*$ such that $\Sigma=\Sigma_\mathcal{F}$.

        The $n$-th Rauzy graph of a subshift $\Sigma$ is the directed graph $\Gamma(\Sigma,n)$ with vertex set $\mathcal{L}_{n-1}(\Sigma)$, and directed edges from $w_1\cdots w_{n-1}$ to $w_2\cdots w_n$ for every $w_1\ldots w_n\in \mathcal{L}_n(\Sigma)$.

        The $n$-step SFT approximation of a subshift $\Sigma$, denoted by $\Sigma^{(n)}$, is the edge shift generated by the Rauzy graph $\Gamma(\Sigma,n)$. In other words, it is the space of all bi-infinite walks in $\Gamma(\Sigma,n)$. In the Hausdorff metric defined in Subsection \ref{subsec:Preliminaries}, $d_\mathcal{H}\left(\Sigma,\Sigma^{(n)}\right)<2^{-n}$.

        Now we consider two subshifts $X$ and $Y$ over alphabets $\mathcal{A}$ and $\mathcal{B}$ respectively. Let $a, m\in\Z$ be such that $-m\leq a$ and $\phi\colon\mathcal{L}_{m+a+1}(X)\to \mathcal{B}$ be a block map, i.e., a function, also known as local rule, that maps $X$-admissible words of length $m+a+1$ to a symbol in the alphabet $\mathcal{B}$. The map $\varphi\colon X\to \mathcal{B}^{\Z}$ defined by $y=\varphi(x)$, where
            \[y_i=\phi(x_{i-m}x_{i-m+1}\cdots x_{i+a})=\phi\left(x_{[i-m,i+a]}\right),\]
        is called a sliding block code with memory $m$ and anticipation $a$ induced by $\phi$. If $\varphi(X)\subseteq Y$, we write $\varphi\colon X\to Y$.

        This type of map between subshifts satisfies $\varphi\circ \sigma_X=\sigma_Y\circ\varphi$, where $\sigma_X$ and $\sigma_Y$ denote the shift maps in $X$ and $Y$ respectively. Therefore, if $\varphi$ is also surjective, a sliding block code $\varphi\colon X\to Y$ is a factor map. By the Curtis--Lyndon--Hedlund theorem, sliding block codes describe all possible factor maps between subshifts.

        \subsection{Space of subsystems and setting}\label{subsec:Preliminaries}
        Let $(X,f)$ be a topological dynamical system with $X$ a compact metric space and $f\colon X\to X$ a continuous function. Define the space 
            \[S(X,f)\coloneq \{Y\subseteq X \mid Y \text{ is compact and } f(Y)=Y\}\]
        of all subsystems of $(X,f)$.
        We endow this space with the Hausdorff metric, given by
            \[d_\mathcal{H}(Y,Y')\coloneq \max\left\{\sup_{y\in Y}d(y,Y'),\sup_{y'\in Y'}d(y',Y)\right\}\]
        for $Y,Y'$ compact $f$-invariant subsets of $X$. The compactness of the metric space $X$ implies that $(S(X,f),d_\mathcal{H})$ is compact as well. 
        
        Define the Hausdorff ball centered at $Y$ of radius $r$ by \[B_\mathcal{H}(Y,r)\coloneq\{Y'\in S(X,f) \mid d_\mathcal{H}(Y,Y')<r\}.\]

        If $\pi\colon (X,f)\to (Y,g)$ is a factor map between topological dynamical systems with $X,Y$ compact metric spaces, then $\pi$ induces a map $\pi_*\colon S(X,f)\to S(Y,g)$ defined by $Z\mapsto \pi(Z)$.

        \begin{proposition}\label{prop:contsurjpi}
            The map $\pi_*\colon S(X,f)\to S(Y,g)$, $Z\mapsto \pi(Z)$, induced by a topological semiconjugacy $\pi\colon (X,f)\to (Y,g)$ between compact metric spaces, is well-defined, continuous in the Hausdorff metric, and surjective.
        \end{proposition}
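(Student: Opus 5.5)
The plan is to verify the three claims in turn: well-definedness, then surjectivity, then continuity. Only compactness of $X$ and $Y$, continuity of $\pi$, and the relation $\pi\circ f=g\circ\pi$ are needed.

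\emph{Well-definedness.} Let $Z\in S(X,f)$. Then $\pi(Z)$ is compact, being the continuous image of a compact set. It is also $g$-invariant, since
\[g(\pi(Z))=\pi(f(Z))=\pi(Z).\]
Hence $\pi(Z)\in S(Y,g)$.

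\emph{Surjectivity.} Let $W\in S(Y,g)$ and put $Z=\pi^{-1}(W)$, which is closed and hence compact.
\begin{enumerate}
\item Since $\pi$ is surjective, $\pi(Z)=W$.
\item From $\pi\circ f=g\circ\pi$ we get $f(Z)\subseteq \pi^{-1}(g(W))=Z$, so $Z$ is forward invariant but possibly $f(Z)\neq Z$ if $f$ is not a homeomorphism.
\end{enumerate}
To repair the second point, take the eventual image $Z_\infty=\bigcap_{n\ge0}f^n(Z)$. This is a decreasing intersection of nonempty compact sets, so it is compact and nonempty. The standard compactness argument gives $f(Z_\infty)=Z_\infty$.

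It remains to check $\pi(Z_\infty)=W$. For each $n$,
\[\pi(f^n(Z))=g^n(\pi(Z))=g^n(W)=W.\]
So for $w\in W$ the sets $f^n(Z)\cap\pi^{-1}(w)$ form a decreasing sequence of nonempty compact sets. Their intersection is nonempty, and any point in it lies in $Z_\infty$ and maps to $w$. (In the paper's setting $f$ is a homeomorphism, so $Z$ is already invariant and this step is trivial.)

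\emph{Continuity.} Since $X$ is compact, $\pi$ is uniformly continuous. Given $\eps>0$, choose $\delta>0$ with $d(x,x')<\delta\Rightarrow d(\pi x,\pi x')<\eps$. Suppose $d_\mathcal{H}(Z,Z')<\delta$. Then every $z\in Z$ has some $z'\in Z'$ with $d(z,z')<\delta$, so $d(\pi z,\pi z')<\eps$. The symmetric statement holds with the roles of $Z$ and $Z'$ exchanged. Therefore $d_\mathcal{H}(\pi(Z),\pi(Z'))\le\eps$.

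The only step needing care is invariance of the preimage in the surjectivity argument, which the eventual-image construction handles.
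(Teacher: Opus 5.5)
Your proof is correct. For well-definedness and continuity it is the same argument as the paper's: compactness of $\pi(Z)$, $g(\pi(Z))=\pi(f(Z))$, and uniform continuity of $\pi$ together with attainment of distances to compact sets.

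The difference is in surjectivity. The paper takes $\pi^{-1}(A)$ directly and asserts $f(\pi^{-1}(A))=\pi^{-1}(g(A))=\pi^{-1}(A)$. In general only the inclusion $f(\pi^{-1}(A))\subseteq\pi^{-1}(A)$ holds. The reverse inclusion uses that $f$ and $g$ are homeomorphisms, since then $\pi(f^{-1}(x))=g^{-1}(\pi(x))\in g^{-1}(A)=A$. That holds in every application in the paper, but not for the proposition as literally stated for continuous maps. Your eventual-image construction $Z_\infty=\bigcap_{n\ge 0}f^n(\pi^{-1}(W))$, combined with the nested nonempty compact sets $f^n(\pi^{-1}(W))\cap\pi^{-1}(w)$, proves the statement in that full generality. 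It costs a standard compactness argument, whereas the paper's route is a one-liner that silently relies on invertibility.

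One small correction to your parenthetical remark: $f$ being a homeomorphism is not by itself enough to make $\pi^{-1}(W)$ invariant; you also need $g$ injective. For instance, take $f=\sigma$ on the two-sided full shift on $\{0,1\}$, $g$ the one-sided shift, $\pi(x)=x_{[0,\infty)}$, and $W=\{1^k0^\infty : k\ge 0\}\cup\{1^\infty\}$. Then $g(W)=W$, but a point $y$ with $y_{[-1,\infty)}=010^\infty$ lies in $\pi^{-1}(W)$ and not in $\sigma(\pi^{-1}(W))$. In the paper both maps are homeomorphisms, and in any case your main argument does not depend on this remark.
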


        \begin{proof}
            We first prove that the map $\pi_*$ is well defined. If $Z\in S(X,f)$, that is, $Z$ is compact and $f(Z)=Z$, then $\pi(Z)$ is a compact subset of $Y$ by continuity of $\pi$. The semiconjugacy property gives $g(\pi(Z))=\pi(f(Z))=\pi(Z)$, so $\pi(Z)$ is $g$-invariant. Therefore, $\pi_*(Z)=\pi(Z)\in S(Y,g)$.

            In a similar fashion, we see that if $A\in S(Y,g)$, then $\pi^{-1}(A)\in S(X,f)$: $A$ is compact so $\pi^{-1}(A)$ is a closed subset of $X$ and hence compact. In addition, the semiconjugacy gives $f(\pi^{-1}(A))=\pi^{-1}(g(A))=\pi^{-1}(A)$. 
            This proves, in particular, that $\pi_*$ is surjective.

            Now we prove that $\pi_*$ is continuous: let $\eps>0$ and $\delta>0$ be such that $d_X(x,y)<\delta$ implies $d_Y(\pi(x),\pi(y))<\eps/2$, where $d_X$ and $d_Y$ denote the metrics in $X$ and $Y$ respectively. 
            Consider two arbitrary sets $A,B\in S(X,f)$ such that $d_\mathcal{H}(A,B)<\delta$. In particular, 
            \[\sup_{a\in A}d_X(a,B)<\delta, \qquad \sup_{b\in B}d_X(b,A)<\delta.\] 
            
            For any $a\in A$, the compactness of $B$ allows us to choose $b=b(a)\in B$ such that $d_X(a,B)=d_X(a,b(a))<\delta$. Thus, we obtain $d_Y(\pi(a),\pi(B))<\eps/2$. Since $a$ was arbitrary, we have
            \[\sup_{w\in \pi(A)}d_Y(w,\pi(B))=\sup_{a\in A}d_Y(\pi(a),\pi(B))\leq\dfrac{\eps}{2}<\eps.\] 
            
            Analogously, $\sup_{w\in\pi(B)}d_Y(w,\pi(A))<\eps$, which implies $d_\mathcal{H}(\pi_*(A),\pi_*(B))<\eps$. This concludes the proof.
        \end{proof}
        
        From now on we fix a topologically transitive Smale space $(X,f)$, a Markov partition of it with small diameter, and the corresponding SFT, which we denote by $(\Sigma,\sigma)$ or just $\Sigma$ if the left-shift $\sigma$ is implied. Then there exists a factor map $\pi\colon\Sigma\to X$ which is finite-to-one, almost one-to-one, and intertwines the dynamics of $(X,f)$ and $(\Sigma,\sigma)$ by $\pi\circ\sigma=f\circ\pi$. Both spaces $S(X,f)$ and $S(\Sigma,\sigma)$ are compact with these metrics. 
         
        The Hausdorff metric in $S(\Sigma,\sigma)$ can be explicitly written in terms of sizes of languages, as in \cite{PavlovSchmieding}: for $X, Y$ subshifts, $d_\mathcal{H}(X,Y)=2^{-\inf\{n \mid \mathcal{L}_n(X)\neq \mathcal{L}_n(Y)\}}.$

        From now on, $(Y,g)$ will always denote a compact metric space $Y$, which is usually a subset of $X$ or $\Sigma$, together with a homeomorphism $g\colon Y\to Y$ that is usually the restriction of the hyperbolic homeomorphism $f$ to $Y$ or the restriction of the left-shift $\sigma$. Note that $Y\in S(X,f)$ does not, in general, inherit the structure o a  Smale space.

        By Proposition \ref{prop:contsurjpi}, the factor map $\pi$ induces a map $\pi_*\colon S(\Sigma,\sigma)\to S(X,f)$ defined by $Z\mapsto \pi(Z)$. We refer to this map as the induced map from now on.

        To study typical subsystems in $S(X,f)$ and other relevant subspaces of it, we first introduce the notion of genericity. Since the framework is topological, a property $P$ is called generic (or typical) in a space $S$ if the set of subsystems satisfying $P$ is residual, i.e., if it contains a dense $G_\delta$ subset of $S$ with respect to the Hausdorff metric topology.
        
        Several results on the genericity of subshifts over finite alphabets from \cite{PavlovSchmieding} are used throughout this article. The proof strategy in almost every main result consists of two steps:
        First, given a generic property $P$ in $S(\Sigma,\sigma)$ or a related subspace, we analyze the image under $\pi_*$ of a subsystem satisfying $P$. The resulting property is typically weaker than $P$ and, by Proposition \ref{prop:contsurjpi}, it holds on a dense subset of $S(X,f)$.
        Second, we characterize this property in terms of $G_\delta$ sets. Unlike the first step, the $G_\delta$ requirement does not follow automatically unless $\pi_*$ is open, which is not true in general. Hence, we employ alternative formulations of the properties and problem-specific arguments to establish their residuality.

    \section{The space of all subsystems \texorpdfstring{$S(X,f)$}{S(X,f)}}\label{sec:AllSubsystems}

        The first family of subsystems we consider is that of NMC subsystems. The definition is based on the analogous dynamical property of No Middle Cycles (NMC) for subshifts introduced in \cite{PavlovSchmieding}. This property is defined in terms of the structure of their Rauzy graphs, which graphically represent valid transitions of words of a fixed length. In general, an irreducible component of such a graph has the form of a barbell, possibly with additional weight along the bar between its two ends, as in Figure \ref{fig:barbell}. 

        \begin{figure}[htbp]
        \centering
        \begin{minipage}{0.45\textwidth}
            \centering
                        \begin{tikzpicture}[
                x=0.75pt,
                y=0.75pt,
                yscale=-1,
                xscale=1,
                line width=0.75pt,
                >=stealth,
                scale=0.3
            ]


            \coordinate (L1) at (100.62,90.93);
            \coordinate (L2) at (189.28,66.86);
            \coordinate (L3) at (213.36,155.52);
            \coordinate (L4) at (124.89,179.21);

            \coordinate (P0) at (213.36,155.52);
            \coordinate (P1) at (289.38,195.14);
            \coordinate (P2) at (365.39,234.75);
            \coordinate (P3) at (441.41,274.37);
            \coordinate (P4) at (517.42,313.98);

            \coordinate (R1) at (517.42,313.98);
            \coordinate (R2) at (601.86,306.62);
            \coordinate (R3) at (577.36,384.49);


            \draw[->]
                (107.07,83.17)
                .. controls (124.87,59.43) and (156.64,50.77) ..
                (183.81,62.26);

            \draw[->]
                (195.79,71.12)
                .. controls (219.31,89.46) and (228.51,120.28) ..
                (218.30,146.68);

            \draw[->]
                (208.52,162.98)
                .. controls (190.79,185.03) and (161.26,193.56) ..
                (135.39,184.16);

            \draw[->]
                (118.32,174.52)
                .. controls (95.06,156.28) and (86.40,125.14) ..
                (97.27,98.48);


            \draw[->, shorten <=2pt, shorten >=2pt]
                (P0) -- (P1);

            \draw[->, shorten <=2pt, shorten >=2pt]
                (P1) arc[start angle=207.525,
                        end angle=387.525,
                        radius=42.856];

            \draw[->, shorten <=2pt, shorten >=2pt]
                (P2) arc[start angle=27.525,
                        end angle=207.525,
                        radius=42.856];

            \draw[->, shorten <=2pt, shorten >=2pt]
                (P2) -- (P3);

            \draw[->, shorten <=2pt, shorten >=2pt]
                (P3) -- (P4);


            \draw[->]
                (521.46,307.57)
                .. controls (528.60,297.67)
                            and (539.36,290.35) ..
                (552.28,287.79)
                .. controls (567.91,284.69)
                            and (583.28,289.24) ..
                (594.51,298.85);

            \draw[->]
                (606.48,313.98)
                .. controls (610.64,322.06)
                            and (612.65,331.39) ..
                (611.82,341.12)
                .. controls (610.32,358.51)
                            and (600.09,373.05) ..
                (585.83,380.80);

            \draw[->]
                (571.34,385.96)
                .. controls (561.78,387.78)
                            and (551.60,386.85) ..
                (542.01,382.67)
                .. controls (518.50,372.41)
                            and (506.85,346.26) ..
                (514.15,322.31);


            \fill (L1) circle[radius=3pt];
            \fill (L2) circle[radius=3pt];
            \fill (L3) circle[radius=3pt];
            \fill (L4) circle[radius=3pt];

            \fill (P1) circle[radius=3pt];
            \fill (P2) circle[radius=3pt];
            \fill (P3) circle[radius=3pt];

            \fill (R1) circle[radius=3pt];
            \fill (R2) circle[radius=3pt];
            \fill (R3) circle[radius=3pt];

            \end{tikzpicture}
            \caption{Barbell}\label{fig:barbell}
        \end{minipage}
        \begin{minipage}{0.45\textwidth}
            \centering
                \begin{tikzpicture}[
                        x=0.75pt,
                        y=0.75pt,
                        yscale=-1,
                        xscale=1,
                        line width=0.75pt,
                        >=stealth,
                        scale=0.3
                    ]


                    \coordinate (L1) at (100.62,90.93);
                    \coordinate (L2) at (189.28,66.86);
                    \coordinate (L3) at (213.36,155.52);
                    \coordinate (L4) at (124.89,179.21);

                    \coordinate (P0) at (213.36,155.52);
                    \coordinate (P1) at (289.38,195.14);
                    \coordinate (P2) at (365.39,234.75);
                    \coordinate (P3) at (441.41,274.37);
                    \coordinate (P4) at (517.42,313.98);

                    \coordinate (R1) at (517.42,313.98);
                    \coordinate (R2) at (601.86,306.62);
                    \coordinate (R3) at (577.36,384.49);


                    \draw[->]
                        (107.07,83.17)
                        .. controls (124.87,59.43) and (156.64,50.77) ..
                        (183.81,62.26);

                    \draw[->]
                        (195.79,71.12)
                        .. controls (219.31,89.46) and (228.51,120.28) ..
                        (218.30,146.68);

                    \draw[->]
                        (208.52,162.98)
                        .. controls (190.79,185.03) and (161.26,193.56) ..
                        (135.39,184.16);

                    \draw[->]
                        (118.32,174.52)
                        .. controls (95.06,156.28) and (86.40,125.14) ..
                        (97.27,98.48);


                    \draw[->, shorten <=2pt, shorten >=2pt] (P0) -- (P1);
                    \draw[->, shorten <=2pt, shorten >=2pt] (P1) -- (P2);
                    \draw[->, shorten <=2pt, shorten >=2pt] (P2) -- (P3);
                    \draw[->, shorten <=2pt, shorten >=2pt] (P3) -- (P4);


                    \draw[->]
                        (521.46,307.57)
                        .. controls (528.60,297.67)
                                    and (539.36,290.35) ..
                        (552.28,287.79)
                        .. controls (567.91,284.69)
                                    and (583.28,289.24) ..
                        (594.51,298.85);

                    \draw[->]
                        (606.48,313.98)
                        .. controls (610.64,322.06)
                                    and (612.65,331.39) ..
                        (611.82,341.12)
                        .. controls (610.32,358.51)
                                    and (600.09,373.05) ..
                        (585.83,380.80);

                    \draw[->]
                        (571.34,385.96)
                        .. controls (561.78,387.78)
                                    and (551.60,386.85) ..
                        (542.01,382.67)
                        .. controls (518.50,372.41)
                                    and (506.85,346.26) ..
                        (514.15,322.31);


                    \fill (L1) circle[radius=3pt];
                    \fill (L2) circle[radius=3pt];
                    \fill (L3) circle[radius=3pt];
                    \fill (L4) circle[radius=3pt];

                    \fill (P1) circle[radius=3pt];
                    \fill (P2) circle[radius=3pt];
                    \fill (P3) circle[radius=3pt];

                    \fill (R1) circle[radius=3pt];
                    \fill (R2) circle[radius=3pt];
                    \fill (R3) circle[radius=3pt];

            \end{tikzpicture}
            \caption{Simple barbell}\label{fig:simplebarbell}
        \end{minipage}
        \end{figure}

        The weights correspond to strongly connected components of the graph, and the bars connecting them correspond to its transient components. The NMC property requires every irreducible component of the Rauzy graph of a sufficiently large index to present one of the following structures: it must be a simple barbell, i.e., it contains weights only at its two ends, which must be simple cycles, connected by a simple directed path, as in Figure \ref{fig:simplebarbell} (our terminology differs from that of \cite{PavlovSchmieding}, which uses empty barbell instead of simple barbell); or an isolated simple cycle. Therefore, the condensation graph of the Rauzy graph only has sinks or sources, or equivalently, the only possible strongly connected components are sinks or sources. 
        Dynamically, this means that every aperiodic orbit represented by a simple barbell is asymptotic in forward and backward time to the periodic orbits associated with its two terminal cycles. Thus, every aperiodic orbit is biasymptotic to two different periodic orbits.

        The following definition of an NMC subsystem generalizes this property to the broader class of topological dynamical systems on metric spaces. Despite the absence of Rauzy graphs in the general setting, we preserve the terminology.

        \begin{definition}\label{def:NMC}
            A topological dynamical system $(Y,g)$ is called an NMC system if the following conditions hold: 
                \begin{enumerate}[font=\upshape]
                    \item There exist finitely many orbits in $Y$,
                    \item every aperiodic point is biasymptotic to two different periodic orbits in $Y$, and
                    \item $\alpha(y)\neq \omega(y')$ for every pair of aperiodic points $y,y'\in Y$.
                \end{enumerate} 
        \end{definition}

        Because the definition of an NMC subsystem is given in terms of dynamical objects that are preserved under topological conjugacy, a system that is topologically conjugate to an NMC subsystem is also NMC.

        The goal of this section is to prove the following result.

        \begin{theorem}\label{thm:NMC}
            Let $(X,f)$ be an irreducible Smale space. The family of NMC subsystems is discrete and dense in $S(X,f)$.
        \end{theorem}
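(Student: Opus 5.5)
The proof has two halves: density, which I would get by a direct shadowing construction in $X$ rather than purely through $\pi_*$, and discreteness, for which I would show that every NMC subsystem $Y$ is an isolated point of $S(X,f)$.

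\emph{Density.} Fix $Y\in S(X,f)$ and $\eps>0$. By Proposition \ref{prop:contsurjpi}, $\pi^{-1}(Y)\in S(\Sigma,\sigma)$. By the density of NMC subshifts in $S(\Sigma,\sigma)$ from \cite{PavlovSchmieding}, there is an NMC subshift $W$ close to $\pi^{-1}(Y)$. Continuity of $\pi_*$ then makes $\pi(W)$ close to $Y$.

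The image $\pi(W)$ has finitely many orbits. Images of aperiodic points stay aperiodic: if $\pi(z)$ had period $p$, then the finite fibre $\pi^{-1}(\pi(z))$ would be $\sigma^p$-invariant, so $z$ would be periodic. Images of barbell orbits remain biasymptotic to the images of the terminal cycles.

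Two defects can still appear, because $\pi$ may identify distinct periodic orbits:
\begin{enumerate}
\item a connecting orbit may become biasymptotic to the same periodic orbit at both ends;
\item some periodic orbit may become both an $\alpha$-limit and an $\omega$-limit, violating condition (3) of Definition \ref{def:NMC}.
\end{enumerate}

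I would repair these in $X$ using shadowing and expansivity. For each offending periodic orbit $O$, pick a distinct periodic orbit $O'$ that is Hausdorff-close to $O$. To get it, shadow a periodic pseudo-orbit that winds many times around $O$ and then makes one small jump. Such jumps exist because $X$ is irreducible and periodic points are dense, so there are periodic points near $O$ off $O$.

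Next, re-route the connecting orbits. Each connecting orbit ending at $O$ is replaced by the shadowing orbit of the pseudo-orbit that follows the old connection and then winds around $O'$ forever. Similarly on the source side, so that every periodic orbit serves only as a source or only as a sink. All modifications are as small as we like in the Hausdorff metric. Expansivity guarantees that the new connecting orbits are genuinely aperiodic and biasymptotic to the prescribed orbits. The result is an NMC subsystem within $\eps$ of $Y$.

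\emph{Discreteness.} Let $Y$ be NMC, and form the directed graph whose vertices are the periodic orbits of $Y$ and whose edges are the aperiodic orbits of $Y$. By (2) and (3) this graph is bipartite from sources to sinks, so it has no directed paths of length at least $2$. I claim there is $\delta>0$ such that the maximal invariant set in the $\delta$-neighbourhood $U$ of $Y$ is $Y$ itself.

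Take $x$ whose full orbit stays in $U$. Choosing nearest points in $Y$ gives a $\beta$-pseudo-orbit in $Y$, where $\beta\to0$ as $\delta\to0$. Such a pseudo-orbit can only do one of two things:
\begin{itemize}
\item stay near a single periodic orbit forever; or
\item stay near a source $P$, then travel once near an aperiodic orbit from $P$ to a sink $Q$, then stay near $Q$.
\end{itemize}
The reason is that, away from small neighbourhoods of the periodic orbits, the finitely many connecting orbits are uniformly separated. Also, no orbit leaves a neighbourhood of a sink, since sinks are not $\alpha$-limits. Hence the pseudo-orbit stays, at every time, within a small distance of a single true orbit of $Y$. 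By expansivity, with $\delta$ chosen small relative to $c_f$, $x$ lies on that orbit, so $x\in Y$.

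Now let $Y'\in S(X,f)$ with $d_\mathcal{H}(Y,Y')<\delta$. Then $Y'\subseteq Y$, so $Y'$ is a union of orbits of $Y$. Pick a point on each aperiodic orbit of $Y$ at a time when it is far from all periodic orbits; only that orbit passes near this point. Hausdorff closeness therefore forces $Y'$ to contain every aperiodic orbit, and hence, by taking closures, every periodic orbit. Thus $Y'=Y$, so $Y$ is isolated, and the NMC family is discrete and open.

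\emph{Main obstacle.} I expect the hardest step to be the isolation argument, specifically the uniform classification of $\beta$-pseudo-orbits in $Y$ and obtaining the constants uniformly near the periodic orbits. My first approach there is to use local product structure, since near a hyperbolic periodic orbit a nearby orbit that stays close forever must lie on its local stable or unstable set. The density repair step is more routine, but it needs care to keep the modified orbits pairwise distinct.
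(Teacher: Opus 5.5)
\textbf{Gap in the density half.} The step that fails is ``pick a distinct periodic orbit $O'$ that is Hausdorff-close to $O$''. Once the required accuracy is small, no such $O'$ exists.

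To see this, let $O=O(o)$. Points of $O$ are uniformly separated and $f$ is uniformly continuous. Hence any invariant set lying in a sufficiently small neighbourhood of $O$ has the following property: for each of its points $x$ there is an index $i$ (depending on $x$) with $d(f^n(x),f^n(f^i(o)))<c_f$ for all $n\in\Z$. Expansivity then gives $x=f^i(o)$.

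Put differently, a single periodic orbit is itself an NMC subsystem, so it is isolated in $S(X,f)$. That is exactly what your own discreteness argument proves when applied to $Y=O$.

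Your suggested construction runs into the same obstruction. Take a periodic pseudo-orbit that winds around $O$ and makes one small jump.
\begin{itemize}
\item If it stays in a small neighbourhood of $O$, the orbit shadowing it is $O$ again.
\item Otherwise, after the jump it must follow a genuine orbit that leaves the neighbourhood, and then $O'$ is not Hausdorff-close to $O$.
\end{itemize}
Density of periodic points gives periodic \emph{points} near $O$, not periodic \emph{orbits} near $O$. So the local re-routing ``follow the old connection, then wind around $O'$'' is unavailable. The claim that all modifications are arbitrarily small in $d_{\mathcal H}$ is therefore unsupported.

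\textbf{What is missing.} Any new periodic orbit has to be Hausdorff-close to a \emph{union} of orbits of $\pi(W)$, i.e.\ it must shadow a closed loop of connections.
\begin{itemize}
\item For defect (1) this is possible. Shadow a long segment of the homoclinic orbit, closed up along $O$. The result is a periodic orbit close to $\overline{O(y)}$, as in the first step of Lemma \ref{lem:NMCdense}.
\item For defect (2) there may be no loop through $O$ at all, e.g.\ only connections $P\to O\to Q$. Then no periodic orbit within the $\eps$-neighbourhood of $\pi(W)$ can serve as a new sink near $O$. Instead the connections must be concatenated, so that $O$ stops being a limit set and is approximated by a long stretch of a new connecting orbit.
\end{itemize}
This is why the paper works with the graph $G$, whose vertices are periodic orbits and whose edges are heteroclinic orbits. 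For every edge it shadows either a simple cycle in a sink or source SCC, or a barbell that extends the edge forward into a sink SCC and backward into a source SCC.

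\textbf{The discreteness half.} This part is a legitimate alternative to the paper's route. The paper instead proves that every $Z\in\pi_*^{-1}(Y)$ is an NMC subshift, invokes Pavlov--Schmieding's discreteness, and concludes by a compactness argument. To make your intrinsic argument rigorous, use the following fact: for each $\rho>0$, the set $Y\setminus B(\Per(Y),\rho)$ is a finite set of isolated points of $Y$. This supplies the uniform separation and the pseudo-orbit classification your argument needs.
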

        
        The proof of Theorem \ref{thm:NMC} is divided into several steps. Recall $\pi\colon(\Sigma,\sigma)\to (X,f)$ from the previous section. First, we describe the type of system obtained when applying $\pi_*$ to a subshift in $S(\Sigma,\sigma)$ with the NMC property. 
        
        A point $\pi(x)\in X$ is periodic if and only if $x\in \Sigma$ is periodic, because $\pi$ is finite-to-one. Thus, if $Z\in S(\Sigma,\sigma)$ is an NMC subshift, then $\pi_*(Z)$ contains finitely many periodic orbits. In addition, every aperiodic point in $\pi_*(Z)$ is biasymptotic to two periodic orbits by the continuity of $\pi$. However, both backward and forward limiting orbits need not be distinct. For instance, the coding of the cat map in \cite{BrinStuck} maps the closure of the heteroclinic orbit of $x=1^\infty.3^\infty$ to the closure of the homoclinic orbit of $q=\left(\frac{2}{\sqrt{5}},\frac{3\sqrt{5}-5}{10}\right)\in\mathbb{T}^2$.

        This implies, a priori, that the image under $\pi_*$ of an NMC subshift in $S(\Sigma,\sigma)$ may fail to satisfy the second condition in Definition \ref{def:NMC}.
        It is important to note that an NMC subshift has finitely many orbits, which is a direct consequence of Lemma 3.5 in \cite{PavlovSchmieding}, and so the image under $\pi_*$ of such a system must have finitely many orbits as well. 

        Density of the systems that are images of NMC subshifts is an immediate consequence of Proposition \ref{prop:contsurjpi} and Theorem 3.6 in \cite{PavlovSchmieding}.  If such systems can be approximated by NMC subsystems, this establishes the first part of the theorem. The approximation argument we use relies heavily on the shadowing property of the ambient space $X$.

        \begin{lemma}\label{lem:NMCdense}
            Let $(X,f)$ be an irreducible Smale space and $Y\in S(X,f)$ the image under $\pi_*$ of a subshift in $S(\Sigma,\sigma)$ with the NMC property. Then, for every sufficiently small $\eps\in (0,c_f)$ there exists an NMC subsystem $Y''\in S(X,f)$ such that $d_{\mathcal{H}}(Y,Y'')<\eps$.
        \end{lemma}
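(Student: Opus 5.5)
Start from the structure of $Y=\pi(Z)$ with $Z$ an NMC subshift. $Y$ has finitely many orbits: periodic orbits $P_1,\dots,P_k$ and finitely many aperiodic orbits $O(y_1),\dots,O(y_m)$. Each $y_j$ satisfies $\alpha(y_j)=P_{a(j)}$ and $\omega(y_j)=P_{b(j)}$. Condition (3) of Definition \ref{def:NMC} holds automatically for $Y$. Indeed, in $Z$ every terminal cycle of a barbell is either a source or a sink, and $\pi$ is finite-to-one, so a periodic orbit of $Y$ lifts to periodic orbits of $Z$. The problem is that these lifts can be different periodic orbits with the same image, and that this can also break (3). So the defects we must remove are exactly the aperiodic orbits with $a(j)=b(j)$ (homoclinic orbits), together with any periodic orbit that is both an $\alpha$-limit and an $\omega$-limit.

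The plan is to replace each offending periodic orbit by nearby distinct periodic orbits, so that incoming and outgoing connections land on different orbits. Fix $\eps$ small, and let $\alpha$ be the shadowing constant for $\eps/4$. Let $P$ be a periodic orbit of period $p$ that is an $\omega$-limit of some $y_j$ or $y_{j'}$ and also an $\alpha$-limit. Since $X$ is an irreducible Smale space, periodic points are dense, and more usefully we can build distinct periodic orbits near $P$ by shadowing. Choose a point $q\in P$ and a point $z$ on a homoclinic or heteroclinic connection passing close to $P$. Then shadow the periodic pseudo-orbit that goes around $P$ $N$ times, follows one excursion, and returns. This gives a periodic orbit $P'$ that is $\eps/4$-close to $P\cup$(one excursion) in the Hausdorff sense.

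Simpler is to take the periodic pseudo-orbit ``$P$ repeated $N$ times, with one fixed small perturbation''. Irreducibility and shadowing give infinitely many distinct periodic orbits $\eps/4$-shadowing $P$ itself: take $\alpha$-chains from $q$ to $q$ through a small neighbourhood, which exist by transitivity in the local product neighbourhood. By expansivity with $\eps<c_f$, these are genuinely new orbits disjoint from $Y$.

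Next, rebuild the connecting orbits. For each aperiodic $y_j$ with $\omega(y_j)=P$, form the pseudo-orbit that follows $O(y_j)$ and then, once within $\alpha$ of $P$, switches to the new orbit $P^{\mathrm{out}}$. For incoming connections with $\alpha(y_j)=P$, switch instead to a second new orbit $P^{\mathrm{in}}\ne P^{\mathrm{out}}$ in backward time. A two-sided infinite $\alpha$-pseudo-orbit is $\eps/4$-shadowed by a unique point, unique by expansivity. By (UC)/(UE) and the local product structure, the shadowing point is forward asymptotic to the shadowing periodic orbit and backward asymptotic likewise. So it is biasymptotic to the intended periodic orbits. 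Let $Y''$ be the union of the $P_i$ (or their replacements) and these shadowing orbits. It is closed, since there are finitely many orbits each with limits inside the set, and invariant. It is within $\eps$ of $Y$ in $d_{\mathcal H}$, because every piece is $\eps/4$-close to a piece of $Y$, and every piece of $Y$ is approximated. The choice $P^{\mathrm{in}}\neq P^{\mathrm{out}}$ makes conditions (2) and (3) hold, and (1) is clear.

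The main obstacle is guaranteeing that the replacement periodic orbits remain distinct, and that no orbit in the new set is an $\alpha$-limit and an $\omega$-limit simultaneously. There are two dangers. First, the shadowing orbits for different pseudo-orbits could coincide. Second, a replacement orbit could end up close enough to $P$ that expansivity collapses it onto $P$. I would handle both by choosing the replacement orbits with pairwise distinct minimal periods, say two large distinct primes times $p$. The periods of the periodic shadowing points are controlled because shadowing a periodic pseudo-orbit of period $n$ yields, by uniqueness of the shadowing point, a point of period dividing $n$. To exclude a smaller period we keep track of the excursion, which is $\eps$-far from $P$ and is visited once per cycle. I expect this period-control argument to be the delicate step.
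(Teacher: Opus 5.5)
There is a genuine gap in the central step: the replacement periodic orbits $P^{\mathrm{in}}\neq P^{\mathrm{out}}$ near an offending orbit $P$ need not exist. (Your opening claim that condition (3) holds automatically for $Y$ is also false, as you concede two sentences later.) Expansivity works against you here. An orbit that $\eps/4$-shadows $P$ \emph{is} $P$, and for $\eps$ small relative to $P$ no other orbit even stays in the $\eps/4$-neighbourhood of $P$. So your ``simpler'' construction only returns $P$, and ``by expansivity \dots\ these are genuinely new orbits'' has it backwards. Any new periodic orbit in the $\eps$-neighbourhood of $Y$ must leave $P$ along an aperiodic orbit of $Y$ and come back. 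That requires a cycle through $P$ in the graph whose vertices are the periodic orbits of $Y$ and whose edges are its aperiodic orbits.

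If $P$ sits in the middle of a chain, there is no such cycle. Take $Y=Q\cup P\cup R\cup O(y_1)\cup O(y_2)$, with $y_1$ heteroclinic from $Q$ to $P$ and $y_2$ heteroclinic from $P$ to $R$. Such a $Y$ is $\pi_*(Z)$ for an NMC subshift $Z$ whenever $P$ has two distinct periodic lifts, one used as a sink and one as a source of $Z$, as in the cat-map coding mentioned before the lemma. This $Y$ violates (3). The chain recurrent set of $f|_Y$ is $Q\cup P\cup R$. So for small $\eps$, every periodic orbit in the $\eps$-neighbourhood of $Y$ projects to a periodic pseudo-orbit of $Y$ that stays near $Q\cup P\cup R$, and by expansivity it equals $Q$, $P$ or $R$. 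Hence $P^{\mathrm{in}}$ and $P^{\mathrm{out}}$ do not exist in the regime of the lemma. Your period-counting device cannot help either, since there is no admissible excursion to count.

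The missing idea is to \emph{route through} $P$ rather than split it. Shadow the concatenated pseudo-orbit consisting of a tail of $Q$, a segment of $y_1$, one period of $P$, a segment of $y_2$, and a tail of $R$. Then $P$ stops being a limit set at all; in the example, $Y''=Q\cup R\cup\overline{O(z)}$ with $z$ biasymptotic to $Q$ and $R$. Done systematically, this is the paper's proof, in two steps:
\begin{enumerate}
\item Replace each homoclinic orbit by a single periodic orbit shadowing ``excursion plus one period of its limit orbit''.
\item In the graph $G$ of the resulting $Y'$, turn each edge in a source or sink strongly connected component into a simple cycle, shadowed by a periodic orbit. Turn every other edge into a simple barbell from a source component to a sink component, shadowed by a biasymptotic orbit.
\end{enumerate}
Every $\alpha$-limit then lies near a source component and every $\omega$-limit near a sink component, which gives (2) and (3). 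Your redirection scheme can be made to work when every offending $P$ lies on a cycle of $G$, for instance for homoclinic orbits, but not in general.
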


        \begin{proof}
            We have already established some properties that $Y$ inherits as the image under $\pi_*$ of a subshift with the NMC property, such as biasymptoticity to periodic orbits, a finite number of orbits, and finitely many periodic orbits in particular. From now on, we name orbits according to their limit behavior: periodic orbits; orbits biasymptotic to a single periodic orbit, which we call homoclinic; and orbits biasymptotic to two different periodic orbits, which we call heteroclinic.

            The proof is divided into two steps. First, we construct $Y'\in B_{\mathcal{H}}(Y,\eps/2)\cap S(X,f)$ with no homoclinic orbits. Second, we construct an NMC subsystem $Y''\in B_{\mathcal{H}}(Y',\eps/2)\cap S(X,f)$.
            To achieve the first goal, we replace the homoclinic orbits in $Y$ by nearby periodic orbits, as described below.

            Consider any of the finitely many homoclinic orbits in $Y$, say $O(y)$, and a periodic point $p\in Y$ such that 
                \[\lim_{n\to\infty} d(f^n(p),f^n(y))=0.\] 
            There exists $s\in \{0,\ldots,\per(p)-1\}$ so that 
                \[\lim_{n\to-\infty}d(f^n(y),f^n(f^s(p)))=0.\] 
            Let $\alpha\in (0,\eps/2)$ be given by the shadowing property corresponding to $\eps/2$. By homoclinicity, there exist $l\leq m\in \Z$ so that $d(f^n(y),f^{n}(p))<\alpha$ for $n\geq m+1$; $d(f^{n}(y),f^{n+s}(p))<\alpha$ for $n\leq l$ and $l+s\equiv m+1 \pmod {\per(p)}$. The sequence
                \[y_n=\begin{cases}
                    f^{l+n}(y), & 0\leq n\leq m-l,\\
                    f^{l+n}(p), & m-l+1\leq n\leq m-l+\per(p)
                    \end{cases}\]
            extended periodically to $\Z$ by $y_n=y_r$ for $r\equiv n \pmod {m-l+\per(p)+1}$ is an $\alpha$-pseudo-orbit. By the shadowing property, there exists $y'\in X$ that $\eps/2$-shadows $\{y_n\}_{n\in\Z}$. We verify that $y'$ is periodic. Define $M\coloneq m-l+\per(p)+1$, and let $n\in \Z$. Then 
                \begin{align*}
                    d\big(f^n(f^M(y')),f^n(y')\big)&\leq d\big(f^n(f^M(y')),y_{n+M}\big)+d\big(y_{n+M},f^n(y')\big) \\
                    &= d\big(f^n(f^M(y')),y_{n+M}\big)+d\big(y_{n},f^n(y')\big)<\eps<c_f,
                \end{align*}
            and by expansivity $f^M(y')=y'$. We denote this periodic point by $y'=p(y)$.

            Let $\mathcal{O}_{\text{hom}}$ be the finite set of homoclinic orbits in $Y$. We replace every homoclinic orbit $O(y)\in \mathcal{O}_{\text{hom}}$ with its corresponding periodic orbit $O(p(y))$ constructed as before to form $Y'$:
                \[Y'=\left(Y\setminus \bigcup_{O(y)\in\mathcal{O}_{\text{hom}}} O(y)\right)\cup \bigcup_{O(y)\in\mathcal{O}_{\text{hom}}} O(p(y)).\]

            Observe that every orbit in the first term of the union is either periodic or heteroclinic, and every $O(p(y))$ constructed as before is periodic. Therefore, $Y'$ has finitely many periodic orbits and finitely many heteroclinic orbits. 

            Since $Y$ is closed, and we have only replaced finitely many homoclinic orbits with periodic orbits that remain at distance strictly less than $\eps/2$ from  the closure of their original orbits, $Y'$ is a finite union of closed sets. Thus, $Y'$ is closed and $f$-invariant, meaning $Y'\in S(X,f)$. By our choice of sufficiently large $m$ and $l$, it follows that $d_{\mathcal{H}}(Y,Y')<\eps/2$. 

            In the second step, we construct an NMC subsystem $Y''\in S(X,f)$ satisfying $d_{\mathcal{H}}(Y',Y'')<\eps/2$. Consider the directed multi graph $G$ associated to $Y'$ defined as follows: the vertex set $V(G)$ is the set of periodic orbits of $Y'$, and the edge set $E(G)$ is the set of heteroclinic orbits in $Y'$. For every heteroclinic orbit $O$ with $\alpha(O)=P$ and $\omega(O)=Q$, we draw an edge from $P$ to $Q$. By the structure of $Y'$, it must be $|V(G)|, |E(G)|<\infty$. Note that $G$ has no self-loops since $Y'$ has no homoclinic orbits.

            We obtain $Y''$ by shadowing concatenations of finite orbit segments according to the graph $G$. For every edge $e\in E(G)$ we select a specific path in $G$ that is either a simple cycle or a simple barbell, i.e., two simple cycles connected by a directed simple transition path, and we shadow that specific path in $X$. We have the following cases:

            \begin{enumerate}[font=\upshape]
                \item If $e$ is contained in a strongly connected component (SCC) of $G$ that is a sink or source, we consider a simple cycle $K_e$ contained in the SCC and containing $e$. 
                \item If $e$ is contained in a strongly connected component of $G$ that is neither a sink nor a source, we extend the path forward until we traverse the closest exit of the SCC, and backwards until we traverse the closest entry to the SCC. We then follow a simple path to the next SCC and repeat the procedure until we enter a sink SCC. We extend the path with a simple cycle contained in the sink. Similarly we extend backward, ending in a simple cycle contained in a source SCC. This forms a simple barbell $B_e$ containing $e$. 
                \item If $e$ is in a transient component of $G$, we extend forward and backward as before to obtain a simple barbell $B_e$.
            \end{enumerate}
            Since $|E(G)|<\infty$, we obtain finitely many simple cycles $K_e$ and barbells $B_e$.
            For each simple cycle $K_e$, we construct a periodic orbit in $X$ as follows: let $P_1e_1P_2\ldots P_s e_s P_1$ be the simple path $K_e$ with $e=e_1$. Each $e_i$ corresponds to a heteroclinic orbit $O_i$ with $\alpha(O_i)=P_i$ and $\omega(O_i)=P_{i+1}$ if $i\leq s-1$, and $\omega(O_i)=P_1$ if $i=s$.  Given $\alpha\in (0,\eps/2)$ as before in the shadowing property, we can find points $y_i\in O_i$, integers $l_i\leq m_i$ and periodic points $p_i\in P_i$ satisfying the following for $1\leq i \leq s$:
                \[d\big(f^{n}(p_i),f^{n}(y_i)\big)<\alpha \,\forall\,n\leq l_i, \quad l_i \equiv 0 \pmod {\per(p_i)},\]
                \[d\big(f^n(p_{i+1}),f^{n}(y_i)\big)<\alpha \,\forall\,n\geq m_i+1, \; m_i+1\equiv 0 \pmod {\per(p_{i+1})}\]
                if $1\leq i \leq s-1$, and
                \[d(f^n(p_1),f^n(y_s))<\alpha \,\forall\,n\geq m_s+1, \quad m_s+1\equiv 0 \pmod {\per(p_1)}.\] 
            Define $M_i\coloneq \sum_{j=1}^i (\per(p_j)+m_j-l_j+1)$. The sequence 
                \[z_n=\begin{cases}
                        f^n(p_1), & 0\leq n \leq \per(p_1)-1\\
                        f^{n+l_1-\per(p_1)}(y_1), & \per(p_1)\leq n \leq M_1-1 \\
                        f^{n-M_{i-1}}(p_i), & M_{i-1}\leq n \leq M_{i-1}+\per(p_i)-1 \\
                        f^{n+l_i-\per(p_i)-M_{i-1}}(y_i), & M_{i-1}+\per(p_i)\leq n\leq M_i-1
                       \end{cases}\]
            for $2\leq i \leq s$, extended periodically by $z_n=z_r$ if $n\equiv r \pmod {M_s}$ is an $\alpha$-pseudo-orbit. Let $z_{K_e}\in X$ be the point that $\eps/2$-shadows $\{z_n\}_{n\in\Z}$. The same argument as in the homoclinic case shows that $z_{K_e}$ is periodic. 

            For each barbell $B_e$, we construct a biasymptotic orbit in $X$ as follows: let $P_1e_1\ldots P_se_sP_1$ be the initial simple cycle of $B_e$, let $P_{s+1}e_{s+1}P_{s+2}e_{s+2}\ldots e_{r-1} P_r$ be the transition path with $P_1=P_{s+1}$, and $P_re_{r}P_{r+1}\ldots P_{t}e_t P_{t+1}$ with $P_{t+1}=P_r$ be the terminal simple cycle. Denote by $O_i$ the heteroclinic orbit corresponding to the edge $e_i$ for $1\leq i \leq t$, with $\alpha(O_i)=P_i$ and $\omega(O_i)=P_{i+1}$. By biasymptoticity, we can find points $y_i\in O_i$, periodic points $p_i\in P_i$ (where $p_s=p_1$ and $p_r=p_{t+1}$), and integers $l_i\leq m_i$ so that for $1\leq i \leq t$:
                \[d\big(f^{n}(p_i),f^{n}(y_i)\big)<\alpha \,\forall\,n\leq l_i, \quad l_i \equiv 0 \pmod {\per(p_i)},\]
                \[d\big(f^n(p_{i+1}),f^{n}(y_i)\big)<\alpha \,\forall\,n\geq m_i+1,\; m_i+1\equiv 0 \pmod {\per(p_{i+1})}\]
                for $1\leq i\leq t-1$, and
                \[d(f^n(p_r),f^n(y_t))<\alpha \,\forall\,n\geq m_t+1, \quad m_t+1\equiv 0 \pmod {\per(p_r)}.\] 
            
            Define $M_i\coloneq \sum_{j=1}^i (\per(p_j)+m_j-l_j+1)$ for $i\in \{1,\ldots,t\}$. For $2\leq i\leq t$, define the sequence
                \[z_n=\begin{cases}
                        f^n(p_1), & 0\leq n \leq \per(p_1)-1\\
                        f^{n+l_1-\per(p_1)}(y_1), & \per(p_1)\leq n \leq M_1-1 \\
                        f^{n-M_{i-1}}(p_i), & M_{i-1}\leq n \leq M_{i-1}+\per(p_i)-1 \\
                        f^{n+l_i-\per(p_i)-M_{i-1}}(y_i), & M_{i-1}+\per(p_i)\leq n\leq M_i-1
                       \end{cases}.\]      
            Extend this sequence recursively by $z_n=z_p$ for the unique $p\in \{0,\ldots,M_s\}$ so that $n\equiv p \pmod {M_s}$, if $n<0$; and $z_n=z_q$ for the unique $q\in \{0,\ldots, M_t-M_r\}$ so that $n-M_t \equiv q \pmod {M_t-M_r}$ if $n\geq M_t$. The resulting sequence is an $\alpha$-pseudo-orbit. By the shadowing property, there exists $z_{B_e}\in X$ that $\eps/2$-shadows $\{z_n\}_{n\in\Z}$. We claim that $z_{B_e}$ is biasymptotic to two periodic orbits. 
            
            Recall that the periodic $\alpha$-pseudo orbits corresponding to the simple cycles $P_1e_1\ldots P_se_sP_1$ and $P_re_r\ldots P_te_tP_r$, say $z_n^i$ and $z_n^t$, constructed as in the previous case, are $\eps/2$-shadowed by two periodic points, say $z_i$ and $z_t$ respectively. Note that $z_n=z_{n}^i$ for $n<0$ and $z_n^t=z_{n+M_r}$ for $n\geq 0$. The point $z_{B_e}$ is biasymptotic to them; specifically, $\alpha(z_{B_e})=O(z_i)$ and $\omega(z_{B_e})=O(z_t)$. To prove this, we require a further restriction on $\eps$. In a Smale space $X$, there exists $\delta\in (0,\eps_X)$ so that $V^s(x)=\{y\in X \mid d(f^n(x),f^n(y))<\delta \, \forall n\geq 0\}$ and $V^u(x)=\{y\in X \mid d(f^n(x),f^n(y))<\delta \, \forall n\leq 0\}$. Assume that $\eps<\delta$, so  
                \begin{align*} d(f^n(f^{M_r}(z_{B_e})), f^n(z_t))&\leq d(f^{n+M_r}(z_{B_e}),z_{n+M_r})+d(z_{n+M_r},f^n(z_t))\\ &<\dfrac{\eps}{2}+d(z_{n}^t,f^n(z_t))<\eps<\delta \, \forall n\geq 0.
                \end{align*}
            which implies that $f^{M_r}(z_{B_e})\in V^s(z_t)$ so $\lim_{n\to\infty} d(f^{n+M_r}(z_{B_e}),f^n(z_t))=0$. This demonstrates that $O(z_{B_e})$ is forward asymptotic to $O(z_t)\subseteq \Per(X)$. Symmetrically, $O(z_{B_e})$ is backward asymptotic to $O(z_i)\subseteq \Per(X)$. 

            We now verify that 
                \[Y''\coloneq \bigcup_{\stackrel{e\in \text{SCC(G)}}{\text{sink or source}}}O(z_{K_e})\cup \bigcup_{\stackrel{e\not\in \text{SCC(G)}}{\text{sink or source}}}O(z_{B_e})\] 
            is an NMC subsystem of $(X,f)$. This $Y''$ is clearly $f$-invariant since it is a union of orbits; it is closed, being a finite union of closed sets; and it contains finitely many orbits, which are either periodic or biasymptotic to two periodic orbits. It remains to show that the forward limiting orbit of any heteroclinic orbit does not coincide with the backward limiting orbit of any other heteroclinic orbit.

            If we require $\eps<\min\{d(O,p) \mid O \text{ heteroclinic orbit in } Y', p\in\Per(Y')\cap\overline{O}^c \}$ and $\eps<\min\left\{d(p,q) \mid p\neq q\in\Per(Y')\right\}$, then $\eps>0$ since we have finitely many elements in each set over which we take the minimum. Assume $\eps$ is small enough to satisfy these bounds as well, and consider any two heteroclinic orbits $O$ and $O'$ in $Y''$ with limits $\alpha(O)=P$ and $\omega(O')=Q$, where $P,Q\subseteq \Per(Y'')$. Let $p=z^i(O)$ and $q=z^t(O')$ be the periodic points shadowing the initial and terminal simple cycles $P_1e_1\ldots e_s P_1$ and $Q_{r'}e'_{r'}\ldots e'_{t'}Q_{r'}$ associated to $O$ and $O'$ respectively. We will show that $O(p)\neq O(q)$.

            If $q\in O(p)$, then there exists $m\in\{0,\ldots, \per(q)-1\}$ so that $f^m(q)=p$. Note that $P_i\neq Q_j$ for any $i,j$ since $P_i$ belongs to a source SCC of $G$ and $Q_j$ belongs to a sink SCC of $G$ (they coincide only if $G$ has a unique SCC, in which case every edge has an associated simple cycle, no heteroclinic orbits are constructed, and NMC holds vacuously). Denote by $p_1\in P_1$ the periodic point so that $d(p_1,p)<\eps/2$. Then,
                \begin{align*} 
                    d(f^n(p_1),z_{n+m}^t(O'))&=d(z_{n}^i(O),z_{n+m}^t(O')) \\&\leq d(z_n^i(O),f^n(p))+d(f^n(p),z_{n+m}^t(O'))\\
                    &\leq \frac{\eps}{2}+d(f^{n+m}(q),z_{n+m}^t(O'))<\eps
                \end{align*}
            for $0\leq n\leq \per(p_1)-1$, which contradicts our restriction on $\eps$: $z_{n+m}^t(O')$ is either a periodic point in $Q_j$ for some $j$ or a point in a heteroclinic orbit corresponding to an edge $e'_j$, and $p_1$ is either a different periodic point in $Y'$ or at distance at least $\eps$ from the closure of such an orbit given that $e'_j$ and $P_1$ are in different SCCs of $G$. Therefore, it must be that $O(q)\neq O(p)$, ensuring that $Y''$ is NMC.

            It is clear from the construction that $d_{\mathcal{H}}(Y',Y'')<\eps/2$, so $d_{\mathcal{H}}(Y,Y'')<\eps$. This concludes the proof of the lemma.
        \end{proof}    

        The argument above shows that the family of NMC systems is dense in $S(X,f)$. 
        It remains to prove that NMC systems are isolated in $S(X,f)$. The strategy consists of using discreteness of subshifts with the NMC property in $S(\Sigma,\sigma)$ and transferring it to $S(X,f)$ via the induced map $\pi_*$. First, we need to prove that the preimage of an NMC system has the NMC property. The characterization of NMC, independent of Rauzy graphs, that we use is given below. It can be found in Lemma 3.5 of \cite{PavlovSchmieding}.

        \begin{definition}[NMC property for subshifts]\label{def:NMCsubshift}
            A subshift $Z$ has NMC if and only if there exist finite sets $P$, $M$, $S$ of words so that every sequence in $Z$ is either of the form $p^\infty$, $s^\infty$, or $p^\infty ms^\infty$ for some $p\in P$, $m\in M$, and $s\in S$, and so that no sequence can be written both as $p^\infty$ and $s^\infty$ for some $p\in P$ and $s\in S$.
        \end{definition}

        \begin{lemma}
            Let $Y\in S(X,f)$ be an NMC subsystem. Then every $Z\in \pi_*^{-1}(Y)$ has the NMC property.
        \end{lemma}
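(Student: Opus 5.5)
The plan is to verify the characterization in Definition \ref{def:NMCsubshift} directly for $Z$ with $\pi(Z)=Y$, using that $\pi$ is finite-to-one and that $Y$ has finitely many orbits.

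\textbf{Step 1: $Z$ has finitely many orbits.} Since $Y$ has finitely many orbits, say $O(y_1),\dots,O(y_k)$, we have
\[Z\subseteq \pi^{-1}(Y)=\bigcup_{i} \pi^{-1}(O(y_i)).\]
Each fiber $\pi^{-1}(y_i)$ is finite, and $\pi^{-1}(O(y_i))=\bigcup_n \sigma^n(\pi^{-1}(y_i))$. Hence $\pi^{-1}(Y)$, and therefore $Z$, consists of finitely many $\sigma$-orbits. Moreover, $x\in Z$ is periodic if and only if $\pi(x)$ is periodic, since $\pi$ is finite-to-one.

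\textbf{Step 2: aperiodic points of $Z$ are biasymptotic to periodic orbits.} Let $x\in Z$ be aperiodic. Since $Z$ has finitely many orbits, $\omega(x)$ is a closed invariant union of finitely many orbits that does not contain $O(x)$. A standard argument then gives that $\omega(x)$ is a single periodic orbit. In a compact system with finitely many orbits, every minimal subset is a periodic orbit. If $\omega(x)$ contained an aperiodic orbit, one can induct on the finite partial order of orbits under ``$O'\subseteq\overline{O}$''.

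The cleaner route is symbolic. A subshift with finitely many orbits is countable, so each point has the form $u^\infty w v^\infty$; this is the standard structure of countable subshifts with finitely many orbits. I would cite it or argue it by a pigeonhole argument on the Rauzy graphs. Note that the image $\pi(x)$ is asymptotic to the $\pi$-images of those periodic points. Hence $Z$ consists of periodic points and points of the form $p^\infty m s^\infty$.

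\textbf{Step 3: the separation condition.} Take $P$ to be the set of words $p$ arising as backward tails of aperiodic points, together with words for periodic points that are not the backward tail of anything. Similarly, take $S$ to be the set of forward-tail words. Put isolated periodic orbits in $P$, say.

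We must show that no periodic orbit of $Z$ is both a backward limit of an aperiodic $x$ and a forward limit of an aperiodic $x'$. Suppose $\alpha(x)=\omega(x')=O(q)$ in $Z$. Applying $\pi$ gives $\alpha(\pi(x))=O(\pi(q))=\omega(\pi(x'))$. Since $\pi(x)$ and $\pi(x')$ are aperiodic points of $Y$, this contradicts condition (3) of Definition \ref{def:NMC}. Here we use that $\pi$ maps $\alpha$- and $\omega$-limits onto the corresponding limits, which holds by continuity and compactness: $\pi(\omega(x))=\omega(\pi(x))$.

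\textbf{Main obstacle.} The main obstacle is Step 2. One must rigorously get the form $p^\infty m s^\infty$ with \emph{finite} sets $P$, $M$, $S$. In particular, one must rule out an aperiodic point of $Z$ whose $\omega$-limit contains another aperiodic orbit, which is a ``middle cycle''-type chain.

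I would handle this via $Y$. Suppose $\omega(x)\supseteq O(x'')$ with $x''$ aperiodic. Then $\omega(\pi(x))\supseteq O(\pi(x''))$, with $\pi(x'')$ aperiodic. But condition (2) of Definition \ref{def:NMC} forces $\omega(\pi(x))$ to be a periodic orbit, which is a contradiction. Hence $\omega(x)$ consists only of periodic points. Since $\omega(x)$ is a finite union of periodic orbits and is connected under the dynamics (it is chain transitive), it is a single periodic orbit.

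Symbolic convergence to $s^\infty$ then follows: if $d(\sigma^n x, \sigma^n q)\to0$, the tail coincides with $s^\infty$ eventually. Finiteness of $P$, $M$, $S$ follows from finiteness of orbits.
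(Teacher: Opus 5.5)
Your proof is correct, but its key finiteness step takes a different and much shorter route than the paper's.

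\textbf{How the paper proceeds.} The paper never counts the orbits of $Z$. It obtains finiteness of $\Per(Z)$ from finite fibres. It then shows, as you do, that $\omega(z)\subseteq\Per(Z)$ via condition (2) for $Y$, and that $\omega(z)$ is a single periodic orbit by chain transitivity. It rules out $\alpha(z)=\omega(z)$ by projecting to $Y$. Most of the proof is then a compactness argument that minimal transition words are bounded: unbounded transitions $p^\infty.w_nq^\infty$ would produce, in the limit, aperiodic $x,z\in Z$ with $\omega(x)=O(q')=\alpha(z)$, contradicting condition (3) for $Y$.

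\textbf{How your route differs.} Your Step 1, $\pi^{-1}(O(y))=\bigcup_{w\in\pi^{-1}(y)}O(w)$, uses condition (1) for $Y$ directly to show that $Z$ has finitely many orbits. After that, finiteness of $P$, $M$, $S$ is immediate, and the entire compactness argument becomes unnecessary. You obtain the separation condition from (3) exactly as the paper does. What the paper's longer route buys is that it uses condition (1) only through finiteness of $\Per(Y)$.

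\textbf{What to delete from Step 2.} The first half of your Step 2 is false. Having finitely many orbits does not by itself make $\omega(x)$ a single periodic orbit, and it does not force points to have the form $u^\infty wv^\infty$. For example, the orbit closure of $x=0^\infty.1\,0\,1\,0^2\,1\,0^3\,1\cdots$ consists of exactly three orbits. Yet $\omega(x)$ contains the aperiodic point $0^\infty.10^\infty$, and $x$ is not eventually periodic.

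The valid justification is the one in your ``Main obstacle'' paragraph, which uses condition (2) for $Y$ and $\pi(\omega(x))=\omega(\pi(x))$. After it, you still need the compactness step from $\omega(x)=O(q)$ to $d(\sigma^n(x),O(q))\to 0$ (the paper writes it out). Only then can you conclude that the tail is eventually a shift of $q^\infty$.

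\textbf{A fix in Step 3.} Take $P$ to be the $\alpha$-limit orbits together with the periodic orbits that are neither $\alpha$- nor $\omega$-limits of any aperiodic point. As written, ``periodic points that are not the backward tail of anything'' would also include $\omega$-limit orbits. That would break the requirement that no sequence is both some $p^\infty$ and some $s^\infty$.
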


        \begin{proof}
            Let $Z$ and $Y$ be as in the statement of the lemma. The subshift $Z$ has the NMC property if and only if $Z$ contains finitely many periodic points; every aperiodic orbit is biasymptotic to two different periodic orbits; and the transition words between such periodic orbits are finite. This is just a different way of writing Definition \ref{def:NMCsubshift}.
            
            Since $\pi$ is finite-to-one, $\pi(z)\in Y$ is periodic if and only if $z\in Z$ is periodic, so the finiteness of $\Per(Z)$ is guaranteed. Now let $z\in Z$ be an aperiodic point and let $y=\pi(z)$ be its aperiodic image in $Y$. We prove that $z$ is biasymptotic to two different periodic orbits.

            Consider the sequence $\left\{\sigma^n(z)\right\}_{n\in \N}$ of forward iterates of $z$. Every accumulation point $w$ of this sequence must be a periodic point: 
            \[\sigma^{n_k}(z)\to w \Longrightarrow f^{n_k}(y)=f^{n_k}(\pi(z))=\pi(\sigma^{n_k}(z))\to \pi(w)\]
            thus $\pi(w)\in \omega(y)\subseteq \Per(Y)$, and consequently, $w\in \Per(Z)$. Furthermore, $\omega(z)\subseteq \pi^{-1}(\omega(y))$ is a finite, compact and invariant set. Since $\omega(z)$ is chain transitive, it cannot decompose into more than one periodic orbit. Hence $\omega(z)=O(p)$ for some $p\in\Per(Z)$. 
            We claim that \(z\) is forward asymptotic to \(O(p)\), i.e., $d(\sigma^n(z),O(p))\to 0$. Suppose, for a contradiction, that $d(\sigma^n(z),O(p))$ does not then to zero. Then there exists \(\varepsilon>0\) and an increasing sequence $\{n_k\}_{k\in\N}\subset\mathbb N$ such that
                \[ d(\sigma^{n_k}(z),O(p))\ge \varepsilon \text{ for every }k\in\N.\]
            By compactness of \(Z\), after passing to a subsequence we may assume that $\sigma^{n_k}(z)\to w\in Z$. Since \(n_k\to\infty\), it must be $w\in\omega(z)$. 
            Since the map $x\mapsto d(x,O(p))$ is continuous and vanishes on \(O(p)\), we obtain that 
                \[ d(\sigma^{n_k}(z),O(p))\rightarrow d(w,O(p))=0,\]
            contradicting the fact that \(d(\sigma^{n_k}(z),O(p))\ge\varepsilon\) for every \(k\). Hence \(d(\sigma^n(z),O(p))\rightarrow 0\), that is, \(z\) is forward asymptotic to the periodic orbit \(O(p)\).
            
            Similarly, $z$ is backward asymptotic to a single periodic orbit $O(q)\subseteq\pi^{-1}(\alpha(y))$.

            If $\alpha(z)=\omega(z)=O(p)$, the continuity and semiconjugacy of $\pi$ would give
                \[\alpha(\pi(z))=\pi(\alpha(z))=\pi(O(p))=\pi(\omega(z))=\omega(\pi(z)),\]
            contradicting the fact that $Y$ is an NMC subsystem. Therefore, $z$ is biasymptotic to two different periodic orbits.
            
            It remains to show that the transition words are finite, or equivalently, bounded in length.

            Assume, for the sake of contradiction, that the lengths of the transition words in $Z$ are not uniformly bounded. Then there exist periodic points $p^\infty,q^\infty \in \Per(Z)$ and a sequence \(x_n = p^\infty . w_n q^\infty \in Z\) such that $|w_n| \to \infty$ and $w_n$ neither starts with $p$ nor ends with $q$.
            Passing to a subsequence, assume $x_n \to x \in Z$. We fix the zero-coordinate at the position to the right of the dot. We first verify that the limit $x$ is aperiodic. By hypothesis, each $w_n$ is a minimal transition word, meaning it does not start with the periodic block $p$. Thus, for every $n \in \mathbb{N}$, ${x_n}_{[0, \lvert p \rvert)} \neq p$. Since this condition defines a closed set in the product topology, the limit must satisfy the same constraint, $x_{[0, \lvert p \rvert)} \neq p$. This rules out the possibility that $x = p^\infty$. Because $x$ retains the periodic left ray $p^\infty$ but is not globally periodic, it is therefore aperiodic.
            By the structural decomposition of $Z$, the point $x$ is biasymptotic to periodic orbits. Hence there exist $q' \in \mathrm{Per}(Z)$ and a finite word $w$ such that the future of $x$ satisfies \(x_{[0,\infty)} = w(q')^\infty\). Choose $w$ minimal among those representations, so that any other possible $w'r^\infty$ tail must have $|w'|\geq|w|$. After passing to a further subsequence, assume that $|w_n|>|w|+n|q'|$. By convergence to $x$, for every $m \in \mathbb{N}$ there exists $N(m)$ such that for all $n \ge N(m)$, 
                \[ {x_n}_{[|w|,\, |w|+m|q'|)}= (q')^m. \]
            Define \( k_n \coloneq \max \left\{ m \in \mathbb{N} : {x_n}_{[|w|,\, |w|+m|q'|)} = (q')^m \right\}\). 
            If $k_n$ were infinite for some $n$, it would mean that ${x_{n}}_{[0,\infty)}=w(q')^\infty$ and ${x_n}_{[0,\infty)}=w_nq^\infty$ so $O(q)=O(q')$. Hence, there exists $s$ with $|s|<|q|=|q'|$ such that $sq^\infty=(q')^\infty$. This would allow us to rewrite $x_n=p^\infty.wsq^\infty$ with $ws$ a transition word of length $|w|+|s|<|w|+|q|\leq |w_n|$, which contradicts the minimality of $w_{n}$ in the initial choice of $x_n$. We conclude that the maximum $k_n$ is well defined, and the preceding argument shows that $k_n \to \infty$.

            Define \(z_n \coloneq \sigma^{|w| + k_n|q'|}(x_n)\), which inherits ${z_n}_{[0,|q'|)}\neq q'$. By compactness, we extract a subsequence $z_n \to z\in Z$, and the left tail of $z$ satisfies $z_{(-\infty,0)} = (q')^\infty$ and $z_{[0,|q'|)}\neq q'$.
            If $z$ were periodic, its left tail $z_{(-\infty,0)} = (q')^\infty$ would force $z = (q')^\infty$. This immediately implies $z_{[0,|q'|)} = q'$. However, because $z_n \to z$, the closed condition ${z_n}_{[0,|q'|)} \neq q'$ is preserved in the limit, meaning $z_{[0,|q'|)} \neq q'$. This contradicts the construction of $z$, hence $z$ must be aperiodic.

            Thus, there exist aperiodic points $x,z \in Z$ such that \( \omega(x) = O(q') = \alpha(z)\). Since \(\omega(\pi(x)) = \pi(\omega(x))\) and \(\alpha(\pi(z)) = \pi(\alpha(z))\), we obtain \(\omega(\pi(x)) =\pi(O(q'))= \alpha(\pi(z))\). That is, we have found aperiodic points $\pi(x), \pi(z)\in Y$ whose $\omega$-limit and $\alpha$-limit, respectively, coincide. 
            This contradicts the fact that $Y$ is an NMC subsytem. Therefore, the assumption $|w_n| \to \infty$ is false, and the transition words in $Z$ are uniformly bounded.
        \end{proof}

        We have established the essential fact required to prove discreteness of NMC subsystems.

        \begin{lemma}\label{lem:NMCdiscrete}
            Every NMC subsystem $Y\in S(X,f)$ is isolated and, consequently, the family of NMC systems in $S(X,f)$ is discrete.
        \end{lemma}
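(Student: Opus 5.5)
The plan is to transfer discreteness from the symbolic side by a compactness argument in $S(\Sigma,\sigma)$, using the preceding lemma to identify the limit as an NMC subshift. Suppose, for a contradiction, that $Y$ is an NMC subsystem which is not isolated. Then there is a sequence $Y_k\in S(X,f)$ with $Y_k\neq Y$ and $d_\mathcal{H}(Y_k,Y)\to 0$. For each $k$ set $Z_k\coloneq\pi^{-1}(Y_k)$. By the proof of Proposition~\ref{prop:contsurjpi}, $Z_k\in S(\Sigma,\sigma)$, and by surjectivity of $\pi$ we have $\pi_*(Z_k)=Y_k$. Since $(S(\Sigma,\sigma),d_\mathcal{H})$ is compact, we can pass to a subsequence with $Z_k\to Z'$ for some $Z'\in S(\Sigma,\sigma)$.

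Next I identify $Z'$. Continuity of $\pi_*$ (Proposition~\ref{prop:contsurjpi}) gives
\[\pi_*(Z')=\lim_k\pi_*(Z_k)=\lim_k Y_k=Y,\]
so $Z'\in\pi_*^{-1}(Y)$. (Moreover $Z'\subseteq\pi^{-1}(Y)$, by closedness of the graph of $\pi$, although this is not needed.) By the preceding lemma, every element of $\pi_*^{-1}(Y)$ has the NMC property, so $Z'$ is an NMC subshift.

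Now I invoke isolation of NMC subshifts in $S(\Sigma,\sigma)$, which is the symbolic result of \cite{PavlovSchmieding} (the discreteness part of their Theorem~3.6). If needed, I would reprove it directly as follows. Take $n$ larger than the lengths of the words in $P$, $M$, $S$ from Definition~\ref{def:NMCsubshift}. Any subshift $W$ with $d_\mathcal{H}(W,Z')<2^{-n}$ satisfies $\mathcal{L}_n(W)=\mathcal{L}_n(Z')$, so $W\subseteq Z'^{(n)}$. Every irreducible component of the Rauzy graph $\Gamma(Z',n)$ is a simple barbell or an isolated simple cycle. Consequently every bi-infinite walk in $\Gamma(Z',n)$ is a point of $Z'$, and every edge is used by some walk in $W$. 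This forces $W=Z'$.

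Since $Z_k\to Z'$ and $Z'$ is isolated, we get $Z_k=Z'$ for all large $k$. Hence $Y_k=\pi(Z_k)=\pi(Z')=Y$ for all large $k$, contradicting $Y_k\neq Y$. Therefore $Y$ is isolated. Since every point of the family of NMC systems is isolated in $S(X,f)$, the family is discrete.

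The main obstacle is the middle step: showing that a limit of preimages of the $Y_k$ is an NMC subshift. This is exactly where the preceding lemma is needed, because $\pi_*$ is not open and closeness of $Y_k$ to $Y$ does not directly give closeness of some preimage to a \emph{fixed} preimage of $Y$. The compactness trick avoids this issue: it produces some preimage $Z'$ of $Y$ as a limit, and then the lemma, which applies to \emph{every} preimage of $Y$, shows that $Z'$ is NMC and hence isolated.
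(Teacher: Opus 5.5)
Your proof is correct and is essentially the paper's argument: you pull back the approximating $Y_k$ to $\pi^{-1}(Y_k)$, extract a Hausdorff limit $Z'$ in the compact space $S(\Sigma,\sigma)$, observe that $\pi_*(Z')=Y$ so $Z'$ is NMC by the preceding lemma, and contradict the isolation of NMC subshifts from Theorem~3.6 of \cite{PavlovSchmieding}. Your optional direct reproof of that isolation is not needed and is not right as written: it needs $n$ sufficiently large and an argument that does not rely on Rauzy components being simple barbells (they can branch), so it is simpler to rely on the citation as the paper does. For instance, with $p=0^51$ and $s=0^61$, the NMC subshift $Z'=\overline{O(p^\infty s^\infty)}$ has $(ps)^\infty\in (Z')^{(8)}\setminus Z'$, even though $8$ exceeds all the word lengths.
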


        \begin{proof}
            Let $Y$ be an NMC subsystem and denote by $\text{NMC}(\Sigma,\sigma)$ the family of subsystems of $\Sigma$ with the NMC property. By the previous lemma, $\pi_*^{-1}(Y)\subseteq \text{NMC}(\Sigma,\sigma)$, which is a discrete set according to Theorem 3.6 in \cite{PavlovSchmieding}.
        
            We proceed by contradiction. If $Y$ were not isolated, there would be a sequence of different subsystems $Y_n\neq Y$ such that $d_\mathcal{H}(Y_n,Y)\to 0$. Consider the sequence of subsystems $\pi^{-1}(Y_n)\in S(\Sigma,\sigma)$ and note that they do not belong to $\pi_*^{-1}(Y)$, because surjectivity yields $\pi(\pi^{-1}(Y_n))=Y_n\neq Y$. However, $S(\Sigma,\sigma)$ is compact, so there exists a convergent subsequence. After passing to a subsequence and relabeling, we may assume is the same sequence $(\pi^{-1}(Y_n))_n$ converging to $Z\in S(\Sigma,\sigma)$. Therefore, $\pi_*(\pi^{-1}(Y_n))\to\pi_*(Z)$, i.e., $Y_n\to \pi_*(Z)$ and thus $\pi_*(Z)=Y$. This proves that there exists $Z\in\pi_*^{-1}(Y)$ that is not isolated, a contradiction with the fact that $\text{NMC}(\Sigma)\ni Z$ is discrete.  
        \end{proof}

        We have just proved that NMC subsystems in $S(X,f)$ are isolated. We can strengthen this claim to every isolated point.

        \begin{corollary}\label{cor:isolated}
            Every isolated point of $S(X,f)$ is an NMC subsystem.
        \end{corollary}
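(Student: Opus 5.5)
The argument will use the density of NMC subsystems (Lemma \ref{lem:NMCdense}) together with the isolation of NMC subsystems (Lemma \ref{lem:NMCdiscrete}). Let $Y\in S(X,f)$ be isolated, so there is $r>0$ with $B_\mathcal{H}(Y,r)=\{Y\}$. It suffices to find an NMC subsystem inside $B_\mathcal{H}(Y,r)$. That subsystem must then equal $Y$, and since the NMC property is defined intrinsically it passes directly to $Y$.

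To produce the approximating NMC subsystem we first pass to $\Sigma$. By Proposition \ref{prop:contsurjpi}, $\pi_*$ is surjective, so we may pick $Z\in S(\Sigma,\sigma)$ with $\pi_*(Z)=Y$, for instance $Z=\pi^{-1}(Y)$. By continuity of $\pi_*$ at $Z$ there is $\eta>0$ such that $d_\mathcal{H}(Z,Z')<\eta$ implies $d_\mathcal{H}(Y,\pi_*(Z'))<r/2$. By Theorem 3.6 of \cite{PavlovSchmieding}, NMC subshifts are dense in $S(\Sigma,\sigma)$, so we can choose an NMC subshift $Z'$ with $d_\mathcal{H}(Z,Z')<\eta$. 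Then $Y_1\coloneq\pi_*(Z')$ is the image of an NMC subshift and satisfies $d_\mathcal{H}(Y,Y_1)<r/2$.

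Next we apply Lemma \ref{lem:NMCdense} to $Y_1$ with some $\eps<\min\{r/2,c_f\}$ that is small enough for the lemma. This yields an NMC subsystem $Y''$ with $d_\mathcal{H}(Y_1,Y'')<\eps$. The triangle inequality gives $d_\mathcal{H}(Y,Y'')<r$, so $Y''\in B_\mathcal{H}(Y,r)=\{Y\}$, and therefore $Y=Y''$ is NMC.

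The only point needing care is that Lemma \ref{lem:NMCdense} requires $\eps$ to be \enquote{sufficiently small} in a way that depends on $Y_1$, through the separation of its periodic orbits and similar quantities. This is harmless: the lemma only imposes upper bounds on $\eps$, so we may shrink $\eps$ below $r/2$ as well. An alternative is to note that the isolated point $Y_1$ of the first step already forces things. If $Y_1\neq Y$, then $Y_1$ lies in $B_\mathcal{H}(Y,r)$, which contradicts $B_\mathcal{H}(Y,r)=\{Y\}$. Hence $Y=Y_1$ is the image of an NMC subshift, and the two-step approximation inside the proof of Lemma \ref{lem:NMCdense} can be run with arbitrarily small $\eps$.
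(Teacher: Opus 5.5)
Your proof is correct and is the paper's argument. An isolated $Y$ has a Hausdorff ball equal to $\{Y\}$, and density of NMC subsystems puts an NMC subsystem in that ball, which must then be $Y$. You unpack that density via Theorem 3.6 of \cite{PavlovSchmieding}, continuity of $\pi_*$, and Lemma \ref{lem:NMCdense} with $\eps$ shrunk below $r/2$. Only density is used, so Lemma \ref{lem:NMCdiscrete}, which you announce at the outset, plays no role.
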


        \begin{proof}
            Let $Y\in S(X,f)$ be isolated. Since NMC subsystems are dense in $S(X,f)$, they intersect every open ball. In particular, they intersect the ball that equals the singleton $\{Y\}$. 
        \end{proof}

        Lemmas \ref{lem:NMCdense} and \ref{lem:NMCdiscrete} prove that a typical system in $S(X,f)$ is an NMC subsystem. We record some properties that a typical system in $S(X,f)$ must satisfy as a consequence of Theorem \ref{thm:NMC}.

        \begin{corollary}
            A generic subsystem in $S(X,f)$ is topologically conjugate to a countable sofic subshift, has at most linear complexity, and has zero topological entropy. Furthermore, if it contains more than one periodic orbit, it is not topologically transitive.
        \end{corollary}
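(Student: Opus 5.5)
By Theorem \ref{thm:NMC} (Lemmas \ref{lem:NMCdense} and \ref{lem:NMCdiscrete}), the family of NMC subsystems is open and dense in $S(X,f)$. An open dense set is residual. It therefore suffices to show that every NMC subsystem $Y$ has the listed properties, and I will verify each one directly from the structure of $Y$.

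\textbf{Step 1: conjugacy to a countable sofic subshift.} I would take $Z\in\pi_*^{-1}(Y)$, which is nonempty since $\pi_*$ is surjective (Proposition \ref{prop:contsurjpi}). By the lemma preceding Lemma \ref{lem:NMCdiscrete}, $Z$ has the NMC property. By Definition \ref{def:NMCsubshift}, $Z$ is the finite union of orbit closures of points $p^\infty$, $s^\infty$ and $p^\infty m s^\infty$, so $Z$ is countable. It is also sofic, being presented by the finite labeled graph of cycles $p$, $s$ joined by paths $m$.

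Next I would find a topological conjugacy $Y\to$ subshift. Since $Y$ has finitely many orbits, it is countable, hence zero-dimensional. It is also expansive, being a subsystem of the expansive $(X,f)$. An expansive homeomorphism of a zero-dimensional compact metric space is conjugate to a subshift, via a finite clopen partition of diameter below $c_f$; call the resulting coding $\phi\colon Y\to W$.

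The coded subshift $W$ again has finitely many orbits, each periodic or biasymptotic to periodic orbits. I would check soficity through the standard characterization: a subshift is sofic iff it has finitely many follower sets. Alternatively, one can argue that $W$ equals the NMC-type union of $p^\infty m s^\infty$ orbit closures and write down a finite graph presentation directly. I expect this soficity check to be the main obstacle: one has to see that a countable subshift with finitely many orbits, all biasymptotic to periodic orbits, is sofic. The argument I would try is that each aperiodic point $x\in W$ has the form $u^\infty v w^\infty$ as a sequence, because eventual asymptoticity to a periodic orbit in a subshift means the tail is eventually periodic. Then $W$ is the finite union of sets $\overline{O(u^\infty v w^\infty)}=O(u^\infty v w^\infty)\cup O(u^\infty)\cup O(w^\infty)$, and each such set is manifestly sofic via a barbell graph.

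\textbf{Step 2: complexity and entropy.} For a word count, each of the finitely many aperiodic orbits $u^\infty v w^\infty$ contributes at most $O(n)$ words of length $n$: the position of $v$ within the window gives about $n+|v|$ choices, up to the periodic phases. Periodic orbits contribute bounded numbers of words. Hence $C(W,n)=O(n)$, which is at most linear complexity, and zero entropy follows since $\log C(W,n)/n\to0$. Entropy is also a conjugacy invariant; alternatively, zero entropy holds because every invariant measure is supported on the finitely many periodic orbits, and the variational principle applies.

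\textbf{Step 3: non-transitivity.} Suppose $Y$ is transitive with transitive point $y$, so $\overline{O^+(y)}=Y$. If $y$ is periodic, then $Y=O(y)$ is a single periodic orbit. Otherwise $y$ is aperiodic, with $\alpha(y)=P\neq Q=\omega(y)$, and $Y=O(y)\cup P\cup Q$. In that case the forward orbit closure $O^+(y)\cup Q$ must still contain $P$. But points of $P$ are isolated from $O^+(y)$, since $f^n(y)\to Q$ while $y$ is aperiodic, so $P\cap\overline{O^+(y)}=\emptyset$, a contradiction. Hence a transitive NMC subsystem consists of a single periodic orbit, which is exactly the final claim.
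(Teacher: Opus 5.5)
Your proposal is correct, but for the first two claims it takes a different route from the paper. The paper works through the symbolic cover. It picks $Z\in\pi_*^{-1}(Y)$, which is an NMC subshift by the preceding lemma, and cites \cite{PavlovSchmieding} to know that $Z$ is an SFT with linear (or bounded) complexity. It obtains a subshift conjugate to $Y$ from countability and expansivity, and then gets soficity and the complexity bound because that subshift is a factor of the SFT $Z$. You argue intrinsically instead. Any subshift $W$ conjugate to $Y$ has finitely many orbits, and each aperiodic one has eventually periodic tails: asymptoticity to a periodic orbit forces this in a subshift, since a window of length at least the minimal period fixes the phase. So $W$ is a finite union of periodic orbits and barbell closures $\overline{O(u^\infty v w^\infty)}$. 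From this you read off a finite labeled-graph presentation and the explicit bound $C(W,n)\le An+B$. Your version is more self-contained: it uses only conditions (1)--(2) of Definition \ref{def:NMC} and expansivity. It needs neither the SFT classification from \cite{PavlovSchmieding} nor the factor-complexity comparison, and it describes the structure of every symbolic model explicitly. The detour through $Z$ at the start of your Step 1 is never used and can be dropped. The paper's version is shorter given the citations and fits its general strategy of transferring symbolic facts through $\pi_*$. Your Step 3 is essentially the paper's argument: $\overline{O^+(y)}=O^+(y)\cup\omega(y)$ misses some periodic orbit of $Y$. You take the missing orbit to be $\alpha(y)\neq\omega(y)$ rather than an arbitrary second periodic orbit, which gives directly that a transitive NMC subsystem is a single periodic orbit.
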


        \begin{proof}
            Let $Y\in S(X,f)$ be a generic subsystem. By Theorem \ref{thm:NMC}, generic subsystems are NMC. Thus, $Y$ is the image of a subshift $Z\in S(\Sigma,\sigma)$ with the NMC property under a continuous factor map $\pi\colon Z\to Y$.

            By \cite{PavlovSchmieding}, the subshift $Z$ is a subshift of finite type (SFT) and has linear complexity if the set of its transition words is nonempty, and bounded complexity otherwise. Let $Y'$ be a subshift conjugate to $Y$, which exists since $Y$ is countable, compact and expansive (\cite{Mane}). Since $Y$ is a factor of $Z$, it follows that $Y'$ is a symbolic factor of the SFT $Z$, which immediately implies that $Y'$ is a sofic subshift. 

            Because the word complexity of a factor is bounded by the word complexity of the extension, the sofic subshift $Y'$ inherits at most linear complexity from $Z$. Sofic subshifts with at most linear complexity have zero topological entropy, so $h(\sigma,Y') = 0$. Because topological entropy is invariant under topological conjugacy, $h(f,Y) = 0$.

            Now, assume for the sake of contradiction that $Y$ has more than one periodic orbit and is topologically transitive. Since $Y$ is a compact metric space, topological transitivity is equivalent to the existence of a point $y \in Y$ with a dense forward orbit, meaning $\overline{O^+(y)} = Y$.
            By the NMC property exhibited by $Y$, the forward orbit of every point is asymptotic to a single periodic orbit. Let $\omega(y) = O(q)$ for some $q \in \Per(Y)$. The closure of the forward orbit of $y$ is exactly its forward orbit together with its $\omega$-limit set:
                \[ Y = \overline{O^+(y)} = O^+(y) \cup O(q).\]
            By hypothesis, $Y$ contains at least two distinct periodic orbits. Let $O(p) \subseteq Y$ be a periodic orbit such that $O(p) \neq O(q)$. Since $O(p)$ and $O(q)$ are disjoint, it must be that $O(p) \subseteq O^+(y)$. However, if a periodic orbit is contained in the forward orbit of $y$, the point $y$ itself must be periodic. This implies $Y = \overline{O^+(y)} = O(p)$, which contradicts the assumption that $Y$ contains multiple distinct periodic orbits. 
            Therefore, $Y$ cannot be topologically transitive.
        \end{proof}

    \section{The space of infinite topologically transitive systems \texorpdfstring{$T'(X,f)$}{T'(X,f)}}\label{section:toptransitive}

    The results on $S(X,f)$ in the previous section illustrate the simplicity and degeneracy of a typical system in $S(X,f)$. In order to avoid such dynamical behavior, we consider the space $T(X,f)$ of topologically transitive subsystems of $(X,f)$. If we remove all finite systems, we obtain the space $T'(X,f)$ of all infinite topologically transitive subsystems:
    \[T'(X,f)\coloneq \{Y\in S(X,f) \mid |Y|=\infty,\, Y \text{ is transitive}\}.\]
    Note that the definitions of $T(X,f)$ and $T'(X,f)$ make sense for any topological dynamical system $(X,f)$.

    By Corollary \ref{cor:isolated} and the fact that an NMC subsystem is topologically transitive if and only if it is a single periodic orbit, $T'(X,f)$ can be described as the derived set of $T(X,f)$. In other words, a topologically transitive system is isolated in $S(X,f)$ if and only if it is finite. 

    Recall from Proposition \ref{prop:contsurjpi} that the map $\pi_*\colon S(\Sigma,\sigma)\to S(X,f)$ is continuous and surjective. Consequently, $\pi_*\big|_{T'(\Sigma,\sigma)}\colon T'(\Sigma,\sigma)\to S(X,f)$ is also continuous.

    \begin{lemma}\label{lem:transitivesurj}
        The image of $T'(\Sigma,\sigma)$ under the induced map $\pi_*$ is $T'(X,f)$. Furthermore, the restricted map $\pi_*\big|_{\overline{T'(\Sigma,\sigma)}}\colon\overline{T'(\Sigma,\sigma)}\to \overline{T'(X,f)}$ is well-defined, continuous, and surjective.
    \end{lemma}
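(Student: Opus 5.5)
We prove the two inclusions $\pi_*(T'(\Sigma,\sigma))\subseteq T'(X,f)$ and $T'(X,f)\subseteq \pi_*(T'(\Sigma,\sigma))$, and then pass to closures using continuity and compactness.

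\emph{Forward inclusion.} Let $Z\in T'(\Sigma,\sigma)$. Transitivity passes to factors: for nonempty open $U,V\subseteq \pi(Z)$, the sets $\pi^{-1}(U)\cap Z$ and $\pi^{-1}(V)\cap Z$ are nonempty and open in $Z$. Hence some $n>0$ satisfies $\sigma^n(\pi^{-1}(U))\cap\pi^{-1}(V)\cap Z\neq\emptyset$, and applying $\pi$ gives $f^n(U)\cap V\neq\emptyset$. Infiniteness also passes to the image. Since $\pi$ is finite-to-one, a finite $\pi(Z)$ would force $Z\subseteq\pi^{-1}(\pi(Z))$ to be finite.

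\emph{Reverse inclusion.} This is the main obstacle, because $\pi^{-1}(Y)$ need not be transitive. Let $Y\in T'(X,f)$. Since $Y$ is infinite and transitive, it is perfect: an isolated point of a transitive compact system must have a dense orbit consisting of isolated points, which forces $Y$ to be a finite periodic orbit. So $Y$ is point transitive, and we fix $y$ with $\overline{O^+(y)}=Y$. Choose any $z\in\pi^{-1}(y)$ and set $Z=\omega(z)$, or more robustly the orbit closure of $z$.

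The cleaner route is as follows. $W=\pi^{-1}(Y)$ is a subsystem with $\pi(W)=Y$. By Zorn's lemma there is a minimal closed invariant $W_0\subseteq W$ with $\pi(W_0)=Y$. The point is then to show that $W_0$ can be taken transitive. Pick $z\in\pi^{-1}(y)$. Then $\pi(\omega(z))\supseteq\omega(y)$, and $\omega(y)=Y$ because $Y$ is perfect: a transitive point of a perfect compact system is recurrent, so its $\omega$-limit is all of $Y$. Setting $Z\coloneq\omega(z)\subseteq\Sigma$, we get $\pi(Z)=\omega(y)=Y$ by compactness, since $\pi(\omega(z))=\omega(\pi(z))$. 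However, $\omega(z)$ is only internally chain transitive, not necessarily transitive. So I instead take $Z=\overline{O(z)}$, which is point transitive and satisfies $\pi(Z)=\overline{O(y)}=Y$, using continuity and compactness. $Z$ is infinite because $Y$ is, and it is transitive in the sense of Definition~\ref{def:toptrans} because it has a dense orbit. To upgrade from a dense full orbit to a dense forward orbit, replace $z$ by a limit point: the forward orbit closure $\overline{O^+(z)}$ also maps onto $\overline{O^+(y)}=Y$, but it may fail to be invariant under $\sigma^{-1}$. I expect the workable fix is the following. Choose, by compactness, a sequence $n_k\to\infty$ with $f^{n_k}(y)\to y$ (recurrence), pass to a subsequence with $\sigma^{n_k}(z)\to z'\in\pi^{-1}(y)$, and iterate this; alternatively, work in $\omega(z)$ and use that $\pi$ is finite-to-one. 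The fibre over $y$ is finite, so some $z'\in\pi^{-1}(y)$ lies in $\omega(z)$ and is recurrent along a subsequence. Then $\overline{O^+(z')}\subseteq\omega(z')$ is invariant, transitive, infinite, and maps onto $Y$. Pinning down this recurrent lift, via pigeonhole on the finite fibre $\pi^{-1}(y)$ along the return times of $y$ to itself, is the delicate step.

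\emph{Closures.} $\pi_*$ is continuous, so $\pi_*(\overline{T'(\Sigma,\sigma)})\subseteq\overline{\pi_*(T'(\Sigma,\sigma))}=\overline{T'(X,f)}$, which shows the restricted map is well defined and continuous. For surjectivity, $\overline{T'(\Sigma,\sigma)}$ is compact, so its image is compact, hence closed, and it contains $T'(X,f)$. Therefore the image equals $\overline{T'(X,f)}$.
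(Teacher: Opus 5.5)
Your forward inclusion and your closure step are correct and match the paper in substance. For the forward inclusion you pass transitivity to factors via open sets and get infiniteness from the finite fibres; the paper uses a transitive point instead. In the closure step, your compactness argument that $\pi_*(\overline{T'(\Sigma,\sigma)})$ is closed is more explicit than the paper's.

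The reverse inclusion, however, is not actually established, for two reasons.

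First, the claim that $\overline{O(z)}$ is transitive in the sense of Definition~\ref{def:toptrans} ``because it has a dense orbit'' is false. For $x=\cdots 000.1000\cdots$ in a full shift, $\overline{O(x)}=O(x)\cup\{0^\infty\}$ has a dense two-sided orbit. But $x$ is isolated, and the open set $\{x\}$ never returns to itself.

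Second, your eventual fix is the right one: pass to a forward-recurrent lift $z'$ of $y$ and take $\overline{O^+(z')}$. It targets a genuine issue. The paper takes an arbitrary lift $z$ and asserts that $\overline{\{\sigma^n(z)\mid n\in\N\}}$ is $\sigma$-invariant. This holds only if $z\in\omega(z)$, and a lift of a recurrent point need not be recurrent. But you explicitly leave the existence of such a lift as ``the delicate step''. Your sentence claiming that some $z'\in\pi^{-1}(y)\cap\omega(z)$ ``is recurrent'' does not follow from finiteness of the fibre, since a limit point of $\sigma^{n_k}(z)$ in $\pi^{-1}(y)$ need not itself be recurrent.

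The missing idea that makes your pigeonhole argument close up is that the relation $w'\in\omega(w)$ is transitive. Since $\omega(w)$ is closed and forward invariant, $w'\in\omega(w)$ implies $\omega(w')\subseteq\omega(w)$. The argument then runs as follows.
\begin{itemize}
\item Let $F=\pi^{-1}(y)$, which is finite. Because $y\in\omega(y)$, for each $w\in F$ there are $n_k\to\infty$ with $f^{n_k}(y)\to y$.
\item Any limit point of $\sigma^{n_k}(w)$ lies in $F\cap\omega(w)$. So every $w\in F$ has a successor in $F\cap\omega(w)$.
\item Iterating in the finite set $F$ gives a cycle $w_0,w_1,\ldots,w_r=w_0$ with $w_{i+1}\in\omega(w_i)$. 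Hence $w_0\in\omega(w_{r-1})\subseteq\cdots\subseteq\omega(w_0)$.
\item Put $Z=\overline{O^+(w_0)}=\omega(w_0)$. Then $\sigma(Z)=Z$, and $\pi(Z)=\overline{O^+(y)}=Y$, so $Z$ is infinite.
\item $Z$ is topologically transitive, because the forward orbit of $w_0$ meets every nonempty relatively open subset of $Z$ at arbitrarily large times.
\end{itemize}
With this inserted, and the Zorn's lemma, $\omega(z)$ and $\overline{O(z)}$ detours deleted, your argument becomes a complete proof along the paper's lines. It also repairs the paper's invariance claim.
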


    \begin{proof}
        First, we prove that $\pi_*(T'(\Sigma,\sigma))\subseteq T'(X,f)$. Let $Z\in T'(\Sigma,\sigma)$ and $z$ be a transitive point. Thus, $\{\sigma^n(z)\}_{n\in\N}\stackrel{\pi}{\longmapsto}\{f^n(\pi(z))\}_{n\in\N}\subseteq \pi(Z)$, which implies that $\overline{\{f^n(\pi(z)) \mid n\in\N\}}\subseteq \pi_*(Z)$. On the other hand, for every $y\in \pi_*(Z)$ there exist $z'\in Z$ and an increasing sequence $\{n_k\}_{k\in\N}$ such that $\pi(z')=y$ and $\sigma^{n_k}(z)\to z'$. Therefore, the semiconjugacy gives
            \[\pi\circ \sigma^{n_k}(z)=f^{n_k}(\pi(z))\to \pi(z')=y,\]
        which proves that $y\in \overline{\{f^n(\pi(z)) \mid n\in\N\}}$, and so $\pi(z)$ is a transitive point for $\pi_*(Z)$, i.e., $\pi_*(Z)\in T'(X,f)$, because $\pi$ is finite-to-one so $\pi_*(Z)$ is infinite as well.

        For the reverse inclusion $T'(X,f)\subseteq \pi_*(T'(\Sigma,\sigma))$, let $Y\in T'(X,f)$ and $y\in Y$ be a transitive point. There exists $z\in \pi^{-1}(Y)\in S(\Sigma,\sigma)$ such that $\pi(z)=y$. Define $Z=\overline{\{\sigma^n(z) \mid n\in\N\}}\subseteq \pi^{-1}(Y)$, which is clearly closed, $\sigma$-invariant, infinite and has a transitive point, so $Z\in T'(\Sigma,\sigma)$. 

        Let $y'\in Y$, which we need to prove is the image under $\pi$ of a point in $Z$. 
        If $y'$ is such that $y'=f^n(y)$ for some $n\in\N$, then $y'=\pi(\sigma^n(z))$ and so $y'\in \pi_*(Z)$. If that is not the case, it must be that $y'=\lim_{k\to\infty}f^{n_k}(y)$ with $\{n_k\}$ an increasing subsequence of $\N$. Consider the sequence $\{\sigma^{n_k}(z)\}_{k\in\N}$. By compactness of $Z$, there exists a subsequence $n_{k_j}$ such that $\sigma^{n_{k_j}}(z)\to z'\in Z$. The semiconjugacy gives that $\pi(z')=y'$, and therefore $\pi_*(Z)=Y$. This concludes the proof of the first statement.  

        For the remaining statement, $\pi_*(T'(\Sigma,\sigma))=T'(X,f)$ and the continuity of $\pi_*$ imply that $\pi_*(\overline{T'(\Sigma,\sigma)})\subseteq\overline{\pi_*(T'(\Sigma,\sigma))}=\overline{T'(X,f)}$, thus the restriction map is well-defined and continuous by Proposition \ref{prop:contsurjpi}. 
        In addition, $\pi_*(\overline{T'(\Sigma,\sigma)})$ is a closed set containing $T'(X,f)$, which implies that $\pi_*(\overline{T'(\Sigma,\sigma)})\supseteq \overline{T'(X,f)}$ by the definition of closure of a set. This concludes the lemma. 
        \end{proof}

        Genericity in this section will be stated for $\overline{T'(X,f)}$ instead of $T'(X,f)$, because the former is complete and the Baire Category Theorem applies there, whereas the latter is not known a priori to be completex. However, genericity is equivalent in both sets, which follows from the result below.

        \begin{lemma}\label{lem:TdenseGdelta}
            Let $(X,f)$ be an irreducible Smale space. Then $T'(X,f)$ is a $G_\delta$ dense set in its closure $\overline{T'(X,f)}$.
        \end{lemma}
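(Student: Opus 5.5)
Density is automatic, since $T'(X,f)$ is dense in its own closure. The content of the lemma is therefore the $G_\delta$ property. I would write $T'(X,f)$, inside $\overline{T'(X,f)}$, as a countable intersection of sets that are open relative to $\overline{T'(X,f)}$, handling transitivity and infiniteness separately.

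For transitivity, fix a countable basis $\{U_i\}_{i\in\N}$ of $X$. For a compact invariant $Y$, transitivity is equivalent to point transitivity when $Y$ has no isolated points, which is the case for infinite transitive subsystems. So I would use the standard characterization: $Y$ is transitive if and only if for every pair $i,j$ with $Y\cap U_i\neq\emptyset$ and $Y\cap U_j\neq\emptyset$ there exists $n>0$ with $f^n(Y\cap U_i)\cap U_j\neq\emptyset$.

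The clause ``$Y\cap U_i\neq\emptyset$'' is open in the Hausdorff topology, while its negation is closed. Hence the condition for a fixed pair $(i,j)$,
\[ \{Y : Y\cap U_i=\emptyset\}\cup\{Y: Y\cap \overline{U_j'}=\emptyset\}\cup\{Y : \exists n>0,\ \exists y\in Y\cap U_i,\ f^n(y)\in U_j\}, \]
needs care to make open. The trick I would use is to replace the first two clauses by closed-set conditions. Take $V_i$ with $\overline{V_i}\subseteq U_i$, and use the sets $\{Y: Y\cap\overline{V_i}=\emptyset\}$, which are open. The third set, $\{Y: Y\cap U_i\cap f^{-n}(U_j)\neq\emptyset\}$, is open. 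Ranging over all pairs of basis elements with nested closures gives a countable family of open sets, and their intersection should be exactly the transitive subsystems. This is the usual argument that transitive subsystems form a $G_\delta$ in $S(X,f)$, and I would check the equivalence carefully using compactness of $Y$.

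For infiniteness, I would prove that $T'(X,f)$ is open or $G_\delta$ in $\overline{T'(X,f)}$ by showing that its complement there, the finite transitive subsystems (single periodic orbits) lying in the closure, is countable and closed-ish. A periodic orbit in $\overline{T'(X,f)}$ is a single point of the hyperspace. Moreover, $X$ has only countably many periodic orbits, since expansivity gives finitely many of each period. So the set of $Y$ that are not a given periodic orbit $O$ is open, and intersecting over the countably many periodic orbits gives a $G_\delta$ set consisting of the subsystems that are not a single periodic orbit.

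Intersecting this with the transitive $G_\delta$ yields exactly $T'(X,f)$, since a transitive compact system is either infinite or a single periodic orbit. Restricting to $\overline{T'(X,f)}$ preserves the $G_\delta$ property.

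The main obstacle is the transitivity step: making sure the open-set characterization genuinely yields transitivity for limits and not something weaker. If the direct characterization proves awkward, I would fall back on Lemma \ref{lem:transitivesurj} together with the symbolic fact, from \cite{PavlovSchmieding}, that transitive subshifts form a $G_\delta$. But transferring a $G_\delta$ property through $\pi_*$ is not automatic, so the intrinsic argument above is preferable.
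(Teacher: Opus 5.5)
Your argument is correct, and it departs from the paper mainly in the infiniteness step. For transitivity the paper does what you do, except that it leaves each pair-condition in the form $A(i,j)^c\cup\bigcup_{m}B(i,j,m)$ (closed $\cup$ open) and uses that closed sets in a metric space are $G_\delta$. Your shrinking trick $\overline{V_i}\subseteq U_i$ instead makes each condition genuinely open; both work.

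For infiniteness, the paper relies on Section~\ref{sec:AllSubsystems}. A periodic orbit is NMC and hence isolated (Lemma~\ref{lem:NMCdiscrete}). Conversely, an isolated point is NMC (Corollary~\ref{cor:isolated}), and an NMC transitive system is a single periodic orbit. Hence $T'(X,f)=T(X,f)\cap S'(X,f)$, where $S'(X,f)$ is the closed complement of the isolated points. You instead delete the countably many periodic orbits one hyperspace point at a time, using only that an expansive homeomorphism has finitely many points of each period.

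Your route is more elementary and self-contained. It does not depend on the density of NMC subsystems, which needs shadowing, the Markov coding and \cite{PavlovSchmieding}, and it works for any expansive system. Even expansivity can be dropped: $\{Y : |Y|\le N\}$ is Hausdorff-closed for every $N$, so the infinite subsystems form a $G_\delta$ in any compact metric system. The paper's route, in exchange, identifies $T'(X,f)$ as the non-isolated transitive subsystems and sets up $S'(X,f)$, which it reuses for $TT'(X,f)$ in Section~\ref{sec:totallytransitive}.

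One small inaccuracy: $\overline{T'(X,f)}\setminus T'(X,f)$ is not made of periodic orbits. Since periodic orbits are isolated in $S(X,f)$, none of them even lies in $\overline{T'(X,f)}$, and that complement consists of non-transitive limits. The argument you actually run, intersecting the transitive $G_\delta$ with the complement of the periodic orbits, never uses that description, so nothing breaks.
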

        
        \begin{proof}
            Since $X$ is a compact metric space, it is second countable. Let $\{U_n\}_{n\in\N}$ be a countable basis of open sets for $X$. A subsystem $Y\in S(X,f)$ is topologically transitive if and only if for every $i,j\in\N$ with $Y\cap U_i\neq\emptyset$ and $Y\cap U_j\neq \emptyset$, there exists some $m\in \N$ such that 
                \[Y\cap U_i\cap f^{-m}(U_j)\neq\emptyset.\]
            For fixed indices $i,j\in\N$, consider the set $A(i,j)\coloneq \{Y\in S(X,f) \mid Y\cap U_i\neq\emptyset \text{ and } Y\cap U_j\neq\emptyset\}$, which is open in $(S(X,f),d_\mathcal{H})$. For a fixed $m\in\N$, define 
                \[ B(i,j,m)\coloneq \{Y\in S(X,f) \mid Y\in A(i,j), Y\cap U_{i}\cap f^{-m}(U_j)\neq\emptyset\}.\]
            We verify that $B(i,j,m)$ is open. Let $Y\in B(i,j,m)$ and choose $y\in Y\cap U_i\cap f^{-m}(U_j)$. Since $U_i\cap f^{-m}(U_j)$ is open in $X$, there exists $\eps>0$ such that $B(y,\eps)\subseteq U_i\cap f^{-m}(U_j)$. For any $Y'\in S(X,f)$ with $d_{\mathcal{H}}(Y,Y')<\eps$, the definition of the Hausdorff metric implies $\inf_{y'\in Y'}d(y,y')<\eps$. Because $Y'$ is closed, the infimum is attained, meaning there is $y'\in Y'$ such that $d(y,y')<\eps$. Thus, $y'\in B(y,\eps)\subseteq U_i\cap f^{-m}(U_j)$, which proves $Y'\cap U_i\cap f^{-m}(U_j)\neq\emptyset$. Since $Y'$ is $f$-invariant, this also implies $Y'\cap U_i\neq\emptyset$ and $Y'\cap U_j\neq\emptyset$, hence $Y'\in A(i,j)$. This proves that $B_{\mathcal{H}}(Y,\eps)\subseteq B(i,j,m)$, so the latter set is open.
            
            The set of all topologically transitive systems $T(X,f)$ is then characterized by:
                \[T(X,f)=\bigcap_{i,j\in\N} \left(A(i,j)^c \cup \bigcup_{m\geq 1}B(i,j,m)\right).\]
            For each pair $(i,j)$, $A(i,j)^c$ is closed and $\cup_{m\geq 1}B(i,j,m)$ is open. In a metric space, both open and closed sets are $G_\delta$. Since finite unions and countable intersections of $G_\delta$ sets are $G_\delta$, $T(X,f)$ is a $G_\delta$ set in $S(X,f)$.
            
            Now, let $\text{Iso}(S(X,f))$ denote the set of isolated points in $S(X,f)$. For each isolated point $Y$, the singleton $\{Y\}$ is open, meaning $\text{Iso}(S(X,f)) = \bigcup_{Y \in \text{Iso}S(X,f)} \{Y\}$ is an open set. Its complement,
                \[S'(X,f) = S(X,f) \setminus \text{Iso}(S(X,f)),\]
            is a closed set in $S(X,f)$, and is therefore a $G_\delta$ set. As discussed before, $T'(X,f)$ consists of the transitive systems that are not isolated points in $S(X,f)$, so we can write it as the intersection: 
                \[T'(X,f) = T(X,f) \cap S'(X,f).\]
            As the intersection of two $G_\delta$ sets, $T'(X,f)$ is a $G_\delta$ set in the ambient space $S(X,f)$. It follows immediately from the subspace topology that $T'(X,f)$ is a $G_\delta$ set relative to its closure $\overline{T'(X,f)}$. Since any set is dense in its own closure, $T'(X,f)$ is a dense $G_\delta$ set in $\overline{T'(X,f)}$.
        \end{proof}

        \begin{lemma}\label{lem:Tperfect}
            Let $(X,f)$ be an irreducible Smale space. The set $\overline{T'(X,f)}$ is a perfect subset of $S(X,f)$. 
        \end{lemma}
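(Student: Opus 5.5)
Since $\overline{T'(X,f)}$ is closed in $S(X,f)$, perfectness reduces to showing it has no isolated points. Because $T'(X,f)$ is dense in its closure, it suffices to show that every $Y\in T'(X,f)$ is a limit of elements of $T'(X,f)\setminus\{Y\}$. Indeed, if $W\in\overline{T'(X,f)}$ and $\eps>0$, the ball $B_{\mathcal H}(W,\eps/2)$ contains some $Y\in T'(X,f)$. If $Y\neq W$ we are done. Otherwise, a $Y'\in T'(X,f)\setminus\{Y\}$ with $d_{\mathcal H}(Y,Y')<\eps/2$ lies in $B_{\mathcal H}(W,\eps)\setminus\{W\}$.

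The main step is therefore: given $Y\in T'(X,f)$ and $\eps>0$, produce $Y'\in T'(X,f)$ with $Y'\neq Y$ and $d_{\mathcal H}(Y,Y')<\eps$. I would do this symbolically and push forward through $\pi_*$. By Lemma \ref{lem:transitivesurj}, pick $Z\in T'(\Sigma,\sigma)$ with $\pi_*(Z)=Y$. By uniform continuity of $\pi$ and Proposition \ref{prop:contsurjpi}, there is $N$ such that any subshift $Z'$ with $\mathcal L_N(Z')=\mathcal L_N(Z)$ satisfies $d_{\mathcal H}(\pi(Z'),Y)<\eps$. So I need an infinite transitive $Z'\subseteq\Sigma$ with the same $N$-language as $Z$ and $\pi(Z')\neq Y$.

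For the construction, note that the Rauzy graph $\Gamma(Z,N)$ is strongly connected since $Z$ is transitive, and its edge shift $Z^{(N)}$ is an irreducible SFT containing $Z$. There are two cases.

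If $Z^{(N)}$ is infinite, it has positive entropy, hence contains infinitely many periodic orbits. I would take $Z'$ to be the orbit closure of a point built by concatenating, with growing lengths, walks in $\Gamma(Z,N)$ that visit every edge, chosen so that $Z'$ is transitive and infinite with $\mathcal L_N(Z')=\mathcal L_N(Z)$. The choice must also arrange that $\pi(Z')\neq Y$. One option is to include a long periodic stretch so that $Z'$ contains a periodic orbit whose $\pi$-image is not in $Y$. This is possible because $Y$ is closed and $\Per(Z^{(N)})$ is infinite while $\pi$ is finite-to-one; the issue would arise only if $Y\supseteq\pi(\Per(Z^{(N)}))$. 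If $Y$ does contain all those points, I would instead make $Z'$ a proper subshift with $\pi(Z')\subsetneq Y$, for instance a transitive subshift avoiding some word that is frequent in $Z$ while keeping the $N$-language. Alternatively, pass to $Z^{(N)}$ itself or to $Z^{(M)}$ for larger $M$ when $Z$ is not an SFT. Since $Z$ is infinite, the subshifts $Z^{(M)}$ are infinite irreducible SFTs converging to $Z$. If $Z$ is not of finite type, $Z^{(M)}\neq Z$ for all $M$, and because $\pi$ is finite-to-one and almost one-to-one, one checks that $\pi(Z^{(M)})\neq Y$ for infinitely many $M$ via entropy or cardinality of periodic points. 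If $Z$ is an infinite irreducible SFT, then $\pi(Z)$ has positive entropy, and one can take an infinite transitive proper subshift of $Z$ (for example a mixing sub-SFT of smaller entropy built from a subgraph of a high Rauzy graph), whose image has strictly smaller entropy than $Y$ because $\pi$ preserves entropy.

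The step I expect to be the main obstacle is guaranteeing $\pi(Z')\neq Y$, since $\pi$ is not injective and distinct subshifts can have the same image. I would handle it through entropy and periodic-point counts, both of which $\pi$ preserves or controls because it is finite-to-one. Entropy separates the positive-entropy cases. In the zero-entropy case, $Z$ is not an SFT, so the SFT approximations $Z^{(M)}$ have positive entropy and yield images with positive entropy, distinct from $Y$.
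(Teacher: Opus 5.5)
Your framework is the paper's: perturb symbolically inside a small Hausdorff ball, and use that a finite-to-one $\pi$ preserves entropy to show the image is a different subsystem. The gap is at the step you flag as the main obstacle, which is asserted rather than proved. In the positive-entropy, non-SFT case, entropy preservation alone gives only $h(f,\pi(Z^{(M)}))=h(\sigma,Z^{(M)})\geq h(\sigma,Z)=h(f,Y)$. If equality held, nothing you say would rule out $\pi(Z^{(M)})=Y$, since, as you note, the proper inclusion $Z\subsetneq Z^{(M)}$ need not survive $\pi$.

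The missing ingredient is strict monotonicity: a proper subshift of an irreducible SFT has strictly smaller entropy (see \S4.4 of \cite{LindMarcus}). The paper invokes exactly this. Applied to $Z\subsetneq Z^{(M)}$, it gives $h(f,\pi(Z^{(M)}))>h(f,Y)$ for every $M$, not just infinitely many. It is also what makes ``smaller entropy'' automatic in your SFT case. The alternatives you offer do not fill the gap:
\begin{itemize}
\item Periodic-point counts are not preserved by $\pi$, which can identify periodic orbits.
\item Almost one-to-one-ness of $\pi$ on $\Sigma$ says nothing about its restrictions to subsystems.
\item The branch ``make $Z'$ proper with $\pi(Z')\subsetneq Y$'' is unjustified for the same reason, namely that properness upstairs does not pass through $\pi$. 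When $h(Z)=0$ it cannot be rescued by entropy either.
\end{itemize}
Your zero-entropy subcase, using $Z^{(M)}$, is fine as written.

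In the SFT case you still need to actually produce an infinite transitive $Z'\subsetneq Z$ with $\mathcal{L}_N(Z')=\mathcal{L}_N(Z)$. The paper takes this from Lemma 5.3 of \cite{PavlovSchmieding}. Also, ``mixing sub-SFT'' should read ``irreducible'': if $Z$ has period $p>1$, every sub-SFT of $Z$ has all periods divisible by $p$, so no mixing one exists. Since $Z^{(N)}\supseteq Z$ is always infinite, your initial two-case split is vacuous, and the periodic-stretch option can be dropped.

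With these repairs your argument is correct. The paper's organization is leaner. It picks an irreducible SFT $Z$ in the ball and a nearby proper transitive $Z'\subsetneq Z$, then compares $\pi(Z)$ with $\pi(Z')$ rather than with $Y$. This places two distinct elements of $\overline{T'(X,f)}$ in every ball, with no need for your reduction to $Y\in T'(X,f)$ or for the SFT/non-SFT split.
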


        \begin{proof}
            The set $\overline{T'(X,f)}$ is closed by definition, so it remains to prove that it has no isolated points.

            Let $Y\in\overline{T'(X,f)}$ and $\eps>0$ be fixed. By Lemma \ref{lem:transitivesurj}, $U\coloneq\pi_*^{-1}(B_\mathcal{H}(Y,\eps))\cap \overline{T'(\Sigma,\sigma)}$ is a nonempty open set in the relative topology of $\overline{T'(\Sigma,\sigma)}$. Let $Z\in U$ be an $n$-step SFT approximation of a subsystem $W\in U$, which is transitive if the subsystem $W$ is, with $n$ large enough so that $U_Z\coloneq B_{\mathcal{H}}(Z,2^{-n})\cap\overline{T'(\Sigma,\sigma)}\subseteq U$. Since this set is relatively open in $\overline{T'(\Sigma,\sigma)}$, the construction in Lemma 5.3 of \cite{PavlovSchmieding} implies that there exists $Z'\subseteq Z$ in $U_Z$ such that $Z'\neq Z$. The strict monotonicity of the topological entropy under proper inclusions for irreducible SFTs yields
                \[h(\sigma,Z)>h(\sigma,Z').\]
            Hence 
                \[\pi(Z), \pi(Z')\in\pi(U_Z)\subseteq \pi(U)=B_\mathcal{H}(Y,\eps)\cap\overline{T'(X,f)},\]
            but the topological entropy is preserved under finite-to-one maps so 
                \[h(\sigma,Z)=h(f,\pi(Z))>h(f,\pi(Z'))=h(\sigma,Z').\]
            Therefore $\pi(Z)$ and $\pi(Z')$ are two distinct elements in $B_\mathcal{H}(Y,\eps)$. Since $\eps>0$ and $Y$ are arbitrary, we conclude that $\overline{T'(X,f)}$ is perfect.
        \end{proof}
        
        \subsection{Genericity of topological properties}
        Now that we can use the Baire Category Theorem to produce residual sets in $\overline{T'(X,f)}$, we prove that minimality, unique ergodicity, and zero topological are generic. The technique to obtain a $G_\delta$ set is similar to the one presented in Lemma \ref{lem:TdenseGdelta} above, but tailored to the respective properties we focus on. 

        Minimality of a dynamical system $(Y,g)$ is often phrased in terms of transitive points: $(Y,g)$ is minimal if and only if every point $y\in Y$ has a dense positive orbit. Another classical characterization of minimality states that if $Y'\subseteq Y$ is compact and $g$-invariant, then either $Y'=\emptyset$ or $Y'=Y$. In this case, we use a variant of the definition that is valid on compact spaces, which can be stated in terms of open sets: $Y\in S(X,f)$ is minimal if and only if for every open set $U\neq \emptyset$ there exists $N\in\N$ such that $\bigcup_{n=0}^N f^{-n}(U)\supseteq Y$.
        
        A similar proof of the following result in the context of homeomorphisms of the Cantor set can be found in \cite{Hochman}.

        \begin{theorem}\label{thm:MinimalgenericinT}
            Let $(X,f)$ be an irreducible Smale space. The set of minimal subsystems $M(X,f)$ is generic in $\overline{T'(X,f)}$.
        \end{theorem}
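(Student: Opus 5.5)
We write $M(X,f)$ as a countable intersection of open sets, intersected with $\overline{T'(X,f)}$, and prove each open set is dense there. The Baire Category Theorem applies since $\overline{T'(X,f)}$ is compact.

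Fix a countable basis $\{U_i\}_{i\in\N}$ of $X$. For $i,N\in\N$ define
\[
\mathcal{O}(i,N)\coloneq\Big\{Y\in S(X,f) \;\Big|\; Y\cap \overline{U_i}=\emptyset \text{ or } Y\subseteq \bigcup_{n=0}^N f^{-n}(U_i)\Big\}.
\]
The first alternative is open in the Hausdorff topology because $\overline{U_i}$ is closed. The second is open because $Y$ is compact and sits inside an open set, so a Hausdorff neighbourhood of $Y$ also does. Hence $\mathcal{O}(i,N)$ is open, and so is $\mathcal{O}_i\coloneq\bigcup_N\mathcal{O}(i,N)$.

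Now take $Y\in\bigcap_i\mathcal{O}_i$ and a nonempty open $V\subseteq X$ with $V\cap Y\neq\emptyset$. Pick $y\in V\cap Y$ and choose $U_i$ with $y\in U_i$ and $\overline{U_i}\subseteq V$. Then $Y\cap\overline{U_i}\neq\emptyset$, so $Y\subseteq\bigcup_{n\le N}f^{-n}(U_i)\subseteq\bigcup_{n\le N} f^{-n}(V)$. By the open-set characterization this makes $Y$ minimal. Conversely, a minimal $Y$ lies in every $\mathcal{O}_i$: if $Y\cap\overline{U_i}\ne\emptyset$ but $Y\cap U_i=\emptyset$, the boundary case is a nuisance. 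To avoid it I will instead choose the basis so that for each $i$ there is a companion basis set $U_{i'}\supseteq\overline{U_i}$, and define $\mathcal{O}(i,N)$ using $U_{i'}$ in the second alternative. Then $\bigcap_i\mathcal{O}_i\cap\overline{T'(X,f)}$ is a $G_\delta$ contained in $M(X,f)$ and containing all minimal systems. So it suffices to show each $\mathcal{O}_i\cap\overline{T'(X,f)}$ is dense.

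For density, minimal systems themselves must be dense in $\overline{T'(X,f)}$. I would get this from the symbolic side. By Lemma \ref{lem:transitivesurj}, $\pi_*$ maps $\overline{T'(\Sigma,\sigma)}$ continuously onto $\overline{T'(X,f)}$. By \cite{PavlovSchmieding}, minimal (indeed infinite Toeplitz) subshifts are dense in $\overline{T'(\Sigma,\sigma)}$. The image of a minimal system under a factor map is minimal, since preimages of closed invariant subsets are closed and invariant. Finiteness-to-one keeps it infinite, so it lies in $T'(X,f)$. Hence given $Y\in\overline{T'(X,f)}$ and $\eps>0$, the set $\pi_*^{-1}(B_\mathcal{H}(Y,\eps))\cap\overline{T'(\Sigma,\sigma)}$ is nonempty and relatively open. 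It contains a minimal infinite subshift $Z$, and $\pi(Z)$ is minimal and $\eps$-close to $Y$.

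The main obstacle is confirming that the cited symbolic density of minimal systems holds in $\overline{T'(\Sigma,\sigma)}$ for the SFT $\Sigma$, rather than only for the full shift. If needed, I would prove it directly. Approximate by a transitive $n$-step SFT approximation $W$, as in Lemma \ref{lem:Tperfect}, which has positive entropy and is infinite. Then construct a Toeplitz-type minimal subshift inside $W$ that uses every word of $\mathcal{L}_n(W)$. This is done by concatenating, with a fixed periodic skeleton, a word visiting all $n$-words via connecting paths in the irreducible Rauzy graph. The resulting subshift is within $2^{-n}$ of $W$.
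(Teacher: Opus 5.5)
Your proof is correct and follows essentially the paper's route: a countable-basis open-cover characterization of minimality shows that $M(X,f)\cap\overline{T'(X,f)}$ is $G_\delta$, and density comes from pushing dense minimal subshifts of $\overline{T'(\Sigma,\sigma)}$ forward by $\pi_*$. The paper avoids your boundary issue differently: it uses the closed (hence $G_\delta$) set $\{Y : Y\cap U_j=\emptyset\}$ instead of a companion set. In your converse direction the companion must also be small enough to fit inside $V$, which your choice $\overline{U_i}\subseteq V$ does not guarantee; taking $U_i=B(x_k,r)$ with companion $B(x_k,2r)$ fixes this.

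Your worry about an SFT versus the full shift is unnecessary. Whether a subshift lies in $S(\Sigma,\sigma)$ depends only on its $L$-words, where $L$ is the step of $\Sigma$, so $S(\Sigma,\sigma)$ is clopen in the space of all subshifts over the same alphabet. Hence density statements from \cite{PavlovSchmieding} restrict directly to $\overline{T'(\Sigma,\sigma)}$, and your fallback construction is not needed.
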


        \begin{proof}
            Since $X$ is a compact metric space, it is second countable. Let $\{U_j\}_{j\in\N}$ be a countable basis of open sets for $X$. Using the previous characterization, a subsystem $Y\in S(X,f)$ is minimal if and only if for every $j\in \N$, either $Y\cap U_j=\emptyset$, or 
                \[ Y\cap U_j\neq\emptyset \Longrightarrow \exists N\in\N \text{ so that } \bigcup_{n=0}^Nf^{-n}(U_j)\supseteq Y.\]

            Fix $j\in\N$ and define the set $A(j)\coloneq \{Y\in S(X,f) \mid Y\cap U_j\neq\emptyset\}$, which is open in $(S(X,f),d_{\mathcal{H}})$. For every $N\in \N$, define 
            \[ B(j,N)\coloneq \left\{ Y\in S(X,f) \mid Y\in A(j) \text{ and } \bigcup_{n=0}^N f^{-n}(U_j)\supseteq Y\right\}.\]

            We verify that $B(j,N)$ is open in $S(X,f)$. Let $Y\in B(j,N)$, meaning $Y$ is contained in the open set $B_N\coloneq \bigcup_{n=0}^N f^{-n}(U_j)$. Since $Y$ is compact and the complement $B_N^c$ is closed, the distance $2\eps\coloneq d\left(Y,\partial B_N^c\right)$ is strictly positive (if $B_N=X$, any $\eps>0$ suffices).
            
            Consider any $Y'\in S(X,f)$ such that $d_{\mathcal{H}}(Y,Y')<\eps$. By the definition of the Hausdorff metric, $Y'$ is contained in the $\eps$-neighborhood of $Y$, denoted $Y_\eps\coloneq\{x\in X \mid d(x,Y)<\eps\}$. By our choice of $\eps$, we have $B_N \supseteq Y_\eps \supseteq Y'$. Furthermore, since $Y'\cap f^{-n}(U_j)\neq\emptyset$ for some $n\leq N$, it follows by $f$-invariance that $f^n(Y')\cap U_j=Y'\cap U_j\neq\emptyset$. Therefore, $B_{\mathcal{H}}(Y,\eps)\subseteq B(j,N)$, proving that $B(j,N)$ is open.

            The set of all minimal systems $M(X,f)$ is then characterized by
                \[M(X,f)=\bigcap_{j\in\N} \left(A(j)^c \cup \bigcup_{N\in\N}B(j,N)\right).\]

            For each $j\in \N$, $A(j)^c$ is closed and the union $\bigcup_{N\in\N}B(j,N)$ is open. Because open and closed sets in a metric space are $G_\delta$, their finite unions and countable intersections are also $G_\delta$. Therefore, $M(X,f)$ is a $G_\delta$ set in $S(X,f)$. 
            Consequently, the intersection $M(X,f)\cap \overline{T'(X,f)}$ is a $G_\delta$ set in the relative topology of $\overline{T'(X,f)}$.
            
            It remains to show the density of $M(X,f)\cap \overline{T'(X,f)}$. Note that the factor map $\pi\colon \Sigma\to X$ maps minimal systems in $\overline{T'(\Sigma,\sigma)}$ to minimal systems in $\overline{T'(X,f)}$. Since $M(\Sigma,\sigma)\cap\overline{T'(\Sigma,\sigma)}$ is dense in $\overline{T'(\Sigma,\sigma)}$, it follows by Lemma \ref{lem:transitivesurj} that $\pi_*(M(\Sigma,\sigma)\cap \overline{T'(\Sigma,\sigma)})\subseteq M(X,f)\cap \overline{T'(X,f)}$ is dense in $\overline{T'(X,f)}$ as well.
        \end{proof}

        Every subsystem $Y\in S(X,f)$ inherits the expansivity of $(X,f)$. In particular, every $Y\in M(X,f)$ is minimal and expansive. By the theorem in \cite{Mane}, a system with such properties is zero-dimensional and thus conjugate to a subshift.

        \begin{corollary}\label{cor:symbolic}
            Let $(X,f)$ be an irreducible Smale space. A typical system in $\overline{T'(X,f)}$ is conjugate to a subshift over a finite alphabet.
        \end{corollary}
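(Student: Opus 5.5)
The plan is to combine Theorem \ref{thm:MinimalgenericinT} with Mañé's theorem, which is cited just before the statement. By Theorem \ref{thm:MinimalgenericinT}, the set $M(X,f)\cap\overline{T'(X,f)}$ is residual in $\overline{T'(X,f)}$. So it suffices to show that \emph{every} minimal $Y$ in this set is conjugate to a subshift over a finite alphabet. The set of such $Y$ then contains a residual set, which is exactly what genericity requires.

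Fix such a minimal $Y$. First I check that $(Y,f|_Y)$ is expansive: if $y,y'\in Y$ satisfy $d(f^n(y),f^n(y'))\le c_f$ for all $n\in\Z$, then expansivity of $(X,f)$ gives $y=y'$. So the same constant $c_f$ works for $Y$. Second, $Y$ is a compact metric space, and $f|_Y$ is a homeomorphism because $f(Y)=Y$. Since $Y$ is minimal and expansive, Mañé's theorem \cite{Mane} shows that $Y$ is zero-dimensional.

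It remains to pass from a zero-dimensional expansive homeomorphism to a subshift on finitely many symbols. Because $Y$ is zero-dimensional and compact, I take a finite partition $\mathcal{P}=\{P_1,\dots,P_k\}$ of $Y$ into clopen sets, each of diameter less than $c_f$. Define the coding map $\phi\colon Y\to\{1,\dots,k\}^\Z$ by $\phi(y)_n=i$ whenever $f^n(y)\in P_i$. This map is continuous because the $P_i$ are clopen, and it satisfies $\phi\circ f=\sigma\circ\phi$. It is injective by expansivity: if $\phi(y)=\phi(y')$, then for every $n$ the points $f^n(y)$ and $f^n(y')$ lie in the same partition element, so $d(f^n(y),f^n(y'))<c_f$ for all $n$, forcing $y=y'$. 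As a continuous bijection from a compact space onto the closed shift-invariant set $\phi(Y)$, the map $\phi$ is a conjugacy onto a subshift over a finite alphabet.

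The only substantive input is Mañé's theorem; the rest is routine, so I expect no real obstacle. One point worth checking is whether $Y$ could be finite. In that case $Y$ is a single periodic orbit, which is trivially conjugate to a periodic subshift, so the conclusion still holds.
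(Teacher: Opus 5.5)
Your proposal is correct and follows the same route as the paper: minimal subsystems are residual in $\overline{T'(X,f)}$ by Theorem \ref{thm:MinimalgenericinT}, each subsystem inherits expansivity from $(X,f)$, and Ma\~n\'e's theorem then gives zero-dimensionality. The only difference is that you spell out the standard clopen-partition coding, which the paper compresses into ``zero-dimensional and thus conjugate to a subshift,'' and your version of that step is sound.
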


        In the previous section, we considered a family of subshifts that was generic in $S(\Sigma,\sigma)$, which motivated the definition of a family of subsystems in $S(X,f)$ with roughly the same dynamical properties as its symbolic counterpart. 
        Similarly, the family of regular Toeplitz subshifts of $\Sigma$ is generic in the space $\overline{T'(\Sigma,\sigma)}$ (Theorem 5.11 in \cite{PavlovSchmieding}). Given that a typical system in $\overline{T'(X,f)}$ is symbolic, a natural question is whether the generic subsystem is still regular Toeplitz.

        Regular Toeplitz subshifts are, in a certain way, very simple and rigid systems. They arise as the symbolic, minimal, almost one-to-one extensions of odometers that satisfy a specific regularity. We define this condition below.

        \begin{definition}
            A sequence $x\in \mathcal{A}^\Z$, with $\mathcal{A}$ a finite set, is regular Toeplitz if there exists a periodic structure $p=\{p_m\}_{m\in\N}$ with $p_m\mid p_{m+1}$ such that
                \[\bigcup_{m\in\N} \mathrm{Per}_{p_m}(x)=\Z, \quad \mathrm{Per}_{p_m}(x)\coloneq\{n\in\Z \mid x_n=x_{n+kp_m}\,\forall k\in\Z\},\]  
            and such that $\lim_{m\to\infty} D({\mathrm{Per}}_{p_m}(x))=1$, where $D$ denotes the density of a subset of $\Z$. The orbit closure $\overline{O_\sigma(x)}$ is called a regular Toeplitz subshift.
        \end{definition}

        Within the space of homeomorphisms of the Cantor set, Hochman proved that the conjugacy class of the universal odometer is generic \cite{Hochman}. In the expansive setting, such as a subshift or, more generally, a Smale space $(X,f)$, odometers cannot exist as subsystems because they are always non-expansive. 

        In the symbolic setting, it was proved in \cite{PavlovSchmieding} that regular Toeplitz subshifts that extend the universal odometer are generic in the space of infinite transitive subsystems. These systems are, in a sense, well-behaved: they are always uniquely ergodic, have zero topological entropy and are metrically isomorphic to their maximal equicontinuous factor (MEF), which in that case is the universal odometer.

        In the general case of an irreducible Smale space $(X,f)$, the regular Toeplitz subshifts that factor onto the universal odometer are dense in $\overline{T'(X,f)}$.  This is the content of the following lemma.

        \begin{lemma}\label{lem:toeplitz}
            Let $(X,f)$ be an irreducible Smale space, and let $\pi_*\colon\overline{T'(\Sigma,\sigma)}\to \overline{T'(X,f)}$ be the induced map in Lemma \ref{lem:transitivesurj}. The image under $\pi_*$ of a regular Toeplitz subshift that factors onto the universal odometer is conjugate to a regular Toeplitz subshift that factors onto the universal odometer.
        \end{lemma}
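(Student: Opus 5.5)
The plan is to pass to a symbolic model of the image, show that a chosen Toeplitz point maps to a regular Toeplitz point there, and then use finite-to-one-ness to show that the maximal equicontinuous factor does not drop. Let $Z\in\overline{T'(\Sigma,\sigma)}$ be a regular Toeplitz subshift with an almost one-to-one factor map $\psi\colon Z\to G$ onto the universal odometer $G$, and put $Y=\pi_*(Z)$.

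\textbf{Step 1: a symbolic model of $Y$.} Since $Z$ is minimal, $Y$ is minimal. Since $Z$ is infinite and $\pi$ is finite-to-one, $Y$ is infinite. As a subsystem of $X$, $Y$ is expansive, so by \cite{Mane} (as in Corollary \ref{cor:symbolic}) there is a conjugacy $\phi\colon Y\to Y'$ onto a subshift $Y'$ over a finite alphabet. The composition $\theta\coloneq\phi\circ\pi|_Z\colon Z\to Y'$ is a factor map between subshifts, so by Curtis--Lyndon--Hedlund it is a sliding block code. Let $r$ be its window radius, i.e.\ memory and anticipation at most $r$. The map $\theta$ is still bounded-to-one.

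\textbf{Step 2: $Y'$ is regular Toeplitz.} Fix a regular Toeplitz point $z\in Z$ with periodic structure $(p_m)$, and set $y'=\theta(z)$. If $[n-r,n+r]\subseteq\mathrm{Per}_{p_m}(z)$, then $y'_{n+kp_m}$ is computed from the same block as $y'_n$, so $n\in\mathrm{Per}_{p_m}(y')$.
\begin{itemize}
\item Since the sets $\mathrm{Per}_{p_m}(z)$ increase in $m$ and exhaust $\Z$, every finite window eventually lies in one of them. Hence $\bigcup_m\mathrm{Per}_{p_m}(y')=\Z$.
\item The complement of $\mathrm{Per}_{p_m}(y')$ is contained in the $r$-neighbourhood of the complement of $\mathrm{Per}_{p_m}(z)$. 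Its density is therefore at most $(2r+1)\bigl(1-D(\mathrm{Per}_{p_m}(z))\bigr)\to0$.
\end{itemize}
So $y'$ is regular Toeplitz. By minimality, $Y'=\overline{O_\sigma(y')}$. Moreover $y'$ is not periodic, because $Y'$ is infinite.

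\textbf{Step 3: the maximal equicontinuous factor of $Y'$ is $G$.} This is the step I expect to be the main obstacle, since the essential periods of $y'$ could a priori be coarser than those of $z$. Let $H$ be the maximal equicontinuous factor of the Toeplitz subshift $Y'$. Then $H$ is an odometer, and the factor map $\eta\colon Y'\to H$ is almost one-to-one. Since $G$ is the maximal equicontinuous factor of $Z$, the composition $\eta\circ\theta$ factors through $\psi$ via a group epimorphism $\rho\colon G\to H$ (after normalising base points).

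Suppose $\rho$ is not injective. Then $K=\ker\rho$ is a nontrivial closed subgroup of $G=\prod_p\Z_p$. The group $G$ is torsion-free, so $K$ is infinite.

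Now pick $h\in H$ with $\eta^{-1}(h)=\{y_0\}$. We have $\theta^{-1}(y_0)=\theta^{-1}\eta^{-1}(h)=\psi^{-1}(\rho^{-1}(h))$. The set $\rho^{-1}(h)$ is a coset of $K$, hence infinite, and $\psi$ is surjective. Therefore $\theta^{-1}(y_0)$ is infinite, which contradicts that $\theta$ is finite-to-one.

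Hence $\rho$ is an isomorphism, and $Y'$ factors onto the universal odometer. Since $\phi$ is a conjugacy, $Y=\pi_*(Z)$ is conjugate to the regular Toeplitz subshift $Y'$, which factors onto the universal odometer, as claimed.
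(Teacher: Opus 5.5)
Your proof is correct and follows essentially the same route as the paper. You conjugate $\pi_*(Z)$ to a subshift via Mañé's theorem, obtain regularity from the radius-$r$ block code and the $(2r+1)\bigl(1-D(\mathrm{Per}_{p_m}(z))\bigr)$ density estimate, and rule out a drop of the odometer with the same argument: a singleton fiber, finite-to-one-ness, and torsion-freeness of the kernel. The only difference is cosmetic: you get the odometer map $G\to H$ from the universal property of the maximal equicontinuous factor of the already-Toeplitz subshift $Y'$, whereas the paper obtains the odometer factor $G_2$ from Theorem 3.1 of Downarowicz--Durand.
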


        \begin{proof}
            We first prove that the factor of a Toeplitz subshift is a Toeplitz subshift, obtained as an extension of the universal odometer. We then prove that the image under $\pi$ of a regular Toeplitz sequence is also regular Toeplitz.
        
            Every factor of a regular Toeplitz subshift $Z\in S(\Sigma,\sigma)$ inherits the minimality of $Z$ and the expansivity of $(X,f)$. The theorem in \cite{Mane} implies  that $\pi_*(Z)$ is conjugate to a subshift $Y$. Let $\phi\colon\pi_*(Z)\to Y$ be such a conjugacy. 
            
            Every Toeplitz subshift is an almost one-to-one extension of an odometer, which in this case we denote by $(G_1,\tau_1)$. By Theorem 3.1 in \cite{DownDurand}, there exists a topological factor $(G_2,\tau_2)$ of $(G_1,\tau_1)$ such that $(Y,\sigma)$ is an almost one-to-one extension of $(G_2,\tau_2)$, i.e., the following diagram commutes: 
            
            \begin{center}
                \begin{tikzcd}
                {(Z,\sigma)} \arrow[rr, "\pi"] \arrow[dd, "p_Z"']  &  & {(\pi_*(Z),f)} \arrow[rr, "\phi"] &  & {(Y,\sigma)} \arrow[dd, "p_Y", dashed] \\
                                                                &  &                                   &  &                                        \\
                {(G_1,\tau_1)} \arrow[rrrr, "\tilde{\pi}", dashed] &  &                                   &  & {(G_2,\tau_2)}                        
                \end{tikzcd}
            \end{center}

            Since $(G_2,\tau_2)$ is a topological factor of $(G_1,\tau_1)$, it is also an odometer. Since $Y$ is minimal, an almost one-to-one extension of an odometer, and symbolic, it is a Toeplitz subshift.

            Observe that, in this case, we can assume that $(G_1,\tau_1)$ is the universal odometer. The induced surjective group homomorphism $\tilde{\pi}$ must be finite-to-one: the fiber of any $g_2\in G_2$ is a coset of $\ker(\tilde{\pi})$, so all fibers of $\tilde{\pi}$ have the same cardinality. Let $g_2\in G_2$ be a point with a unique preimage under $p_Y$, say $p_Y^{-1}(g_2)=\{y\}$. Since $\phi\circ\pi$ is finite-to-one, $(p_Y\circ\phi\circ\pi)^{-1}(g_2)$ is a finite set. If $\tilde{\pi}^{-1}(g_2)$ were infinite, then $(\tilde{\pi}\circ p_Z)^{-1}(g_2)$ would be infinite too, but the commutativity of the diagram implies that 
            $(p_Y\circ\phi\circ\pi)^{-1}(g_2)=(\tilde{\pi}\circ p_Z)^{-1}(g_2)$, a contradiction.
            This also proves that $\tilde{\pi}^{-1}(0)=\ker(\tilde{\pi})$ is a finite subgroup of the torsion-free universal odometer $(G_1,\tau_1)$, so it must be trivial. This proves that $\tilde{\pi}$ is a group isomorphism and, consequently, $(G_2,\tau_2)$ is the universal odometer.

            Now, we need to prove that $Y$ is a regular Toeplitz subshift. Recall the conjugacy $\phi\circ\pi\colon Z\to Y$ between subshifts. By the Curtis-Lyndon-Hedlund theorem, there exist $r\geq 0$ and a sliding block code $\varphi:\mathcal{L}_{2r+1}(Z)\to \mathcal{L}_1(Y)$ defining $\phi\circ\pi$ by $\phi\circ\pi(z)_i=\varphi(z_{i-r},\ldots,z_{i+r})$. Also, since $z\in Z$ is a regular Toeplitz sequence, there exists a periodic structure $p=\{p_i\}_{i\in\N}$ with $p_i\mid p_{i+1}$ such that 
            \[\bigcup_{m\geq 1} \Per_{p_m}(z)=\Z, \quad \text{ where } \Per_{p_m}(z)\coloneq \{n\in\Z \mid z_{n}=z_{n+kp_m} \,\forall k\in\Z\},\]
            and such that $\lim_{m\to\infty}D(\Per_{p_m}(z))=1$.

            Note that $z$ is aperiodic and the factor map is finite-to-one, so $\phi\circ\pi(z)\eqcolon y$ is also aperiodic. First we verify that $y$ is $p$-periodic:
            
            Let $n\in \Z$. By definition of regularity of $z$, for every $j\in [n-r,n+r]$ there exists $N_j$ such that for all $m\geq N_j$, $z_j=z_{j+kp_m} \forall k\in \Z$. Set $N_n=\max \{N_j \mid j\in [n-r,n+r]\}$. Then for $m>N_n$ 
                \[ z_{[n-r,n+r]}=z_{[n-r+kp_m,n+r+kp_m]} \, \forall k\in\Z.\]
            Applying the sliding block code of window size $2r+1$, we obtain, for every $m>N_n$
                \[ \phi\circ\pi(z)_n=\varphi\left(z_{[n-r,n+r]}\right)=\varphi\left(z_{[n-r+kp_m,n+r+kp_m]}\right)=\phi\circ\pi(z)_{n+kp_m} \,\forall k\in \Z.\]
            Therefore, $n\in \bigcup_{i\geq 1}\Per_{p_i}(y)$ and hence $\Z=\bigcup_{i\geq 1}\Per_{p_i}(y)$.

            It remains to show that $\lim_{m\to\infty}D(\Per_{p_m}(y))=1$. The previous argument proves that for $p_m\in p$ fixed
                \[ Q_m\coloneq \{n\in\Z \mid [n-r,n+r]\subseteq \Per_{p_m}(z)\}\subseteq \Per_{p_m}(y),\]

            which translates to the inequality of upper densities $\overline{D}(Q_m)\leq D(\Per_{p_m}(y))$, where the bar is omitted from the term on the right-hand side  because such a limit always exists by Theorem 2.8 in \cite{DownarowiczKasjan}.

            We estimate $\overline{D}(Q_{m})$, the upper density of $Q_m$, by studying its complement. Note that $n\in Q_{m}^c$ if and only if $\exists j\in [-r,r]$ such that $j+n\not\in \Per_{p_m}(z)$, i.e.,
                \[ Q_m^c\subseteq \bigcup_{j=-r}^r \big(\left(\Per_{p_m}(z)\right)^c-j\big) \Rightarrow |Q_m^c \cap [-N,N]|\leq \sum_{j=-r}^r |(\Per_{p_m}(z))^c\cap [-N,N]|\] 
            for every $N\in \N$. Dividing by $2N+1$ and taking $\liminf_{N\to\infty}$ gives
                \[1-D(\Per_{p_m}(y))\leq 1-\overline{D}(Q_m)=\underline{D}(Q_m^c)\leq (2r+1)(1-D(\Per_{p_m}(z))),\]
            which implies that
                \[1-(2r+1)+(2r+1)D(\Per_{p_m}(z))\leq D(\Per_{p_m}(y)).\]
            Letting $m\to\infty$ and using the fact that $\lim_{m\to\infty} D\left(\Per_{p_m}(z)\right)=1$, we conclude that $\lim_{m\to\infty} D(\Per_{p_m}(y))=1$.             
        \end{proof}

        The theorem below immediately follows from Lemma \ref{lem:transitivesurj}, Lemma \ref{lem:toeplitz} above and Theorem 5.15 in \cite{PavlovSchmieding}.
        
        \begin{theorem}\label{thm:Toeplitzdense}
            Let $(X,f)$ be an irreducible Smale space. The family of subsystems in $S(X,f)$ that are conjugate to a regular Toeplitz subshift that factors onto the universal odometer is dense in $\overline{T'(X,f)}$.
        \end{theorem}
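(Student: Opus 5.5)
The plan is to transfer density from the symbolic cover through the continuous surjection of Lemma \ref{lem:transitivesurj}, using Lemma \ref{lem:toeplitz} to identify what the transferred objects are. Let $\mathcal{R}(\Sigma)\subseteq \overline{T'(\Sigma,\sigma)}$ denote the family of regular Toeplitz subshifts of $\Sigma$ that factor onto the universal odometer. Let $\mathcal{R}(X)\subseteq S(X,f)$ denote the family of subsystems conjugate to such a subshift.

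\textbf{Step 1: density upstairs.} Invoke Theorem 5.15 of \cite{PavlovSchmieding}. It gives that $\mathcal{R}(\Sigma)$ is dense in $\overline{T'(\Sigma,\sigma)}$; here $\Sigma$ is the irreducible SFT coding $(X,f)$, which is exactly the symbolic setting of that theorem. If the cited result is stated only for full shifts or for mixing SFTs, I would first check that its proof, which builds Toeplitz approximants inside an SFT approximation $Z^{(n)}$ of a transitive subshift, only uses that approximation and works inside any irreducible SFT. This verification is the only point where I expect any friction.

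\textbf{Step 2: image lands in the right family.} By Lemma \ref{lem:toeplitz}, $\pi_*(\mathcal{R}(\Sigma))\subseteq \mathcal{R}(X)$. Each such image is also infinite and transitive by Lemma \ref{lem:transitivesurj}, since $\pi$ is finite-to-one. Hence $\pi_*(\mathcal{R}(\Sigma))\subseteq \mathcal{R}(X)\cap T'(X,f)$.

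\textbf{Step 3: push density forward.} Let $Y\in\overline{T'(X,f)}$ and $\eps>0$. By Lemma \ref{lem:transitivesurj} there is $W\in\overline{T'(\Sigma,\sigma)}$ with $\pi_*(W)=Y$. By continuity of $\pi_*$ (Proposition \ref{prop:contsurjpi}) there is $\delta>0$ such that $d_\mathcal{H}(W,W')<\delta$ implies $d_\mathcal{H}(Y,\pi_*(W'))<\eps$. By Step 1, choose $W'\in\mathcal{R}(\Sigma)\cap B_\mathcal{H}(W,\delta)$. Then $\pi_*(W')\in \mathcal{R}(X)\cap B_\mathcal{H}(Y,\eps)$. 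This is the general fact that a continuous surjection maps dense sets onto dense sets, and it proves the theorem.
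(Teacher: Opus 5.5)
Your proposal is correct and is essentially the paper's argument. The paper deduces the theorem immediately from Theorem 5.15 of \cite{PavlovSchmieding} (density upstairs), Lemma~\ref{lem:toeplitz} (images under $\pi_*$ are conjugate to regular Toeplitz extensions of the universal odometer), and Lemma~\ref{lem:transitivesurj} (continuous surjection $\overline{T'(\Sigma,\sigma)}\to\overline{T'(X,f)}$), exactly as in your Steps 1--3; it applies the cited symbolic result to the irreducible SFT $\Sigma$ without further comment, so the check you flag in Step 1 is the same implicit input the paper relies on.
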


        We call a dynamical system $(Y,g)$ uniquely ergodic if there exists a unique $g$-invariant Borel probability measure on $Y$. Denote by 
            \[\text{UE}(X,f)\coloneq \{Y\in S(X,f) \mid Y \text{ is uniquely ergodic}\}\]
        the set of subsystems of $(X,f)$ that are uniquely ergodic. We prove that $\text{UE}(X,f)$ is generic in $\overline{T'(X,f)}$. 

        The following proof relies on the standard fact that if $(Y,g)$ is uniquely ergodic with $g$-invariant measure $\mu$, then the ergodic averages $S_N \phi\coloneq \tfrac{1}{N}\sum_{j=0}^{N-1}\phi\circ g^j$ (corresponding to $g$) converge uniformly to $\int_Y \phi\,d\mu$ for every $\phi$ in the space of continuous real-valued functions $C(Y,\mathbb{R})$. 
        
        \begin{theorem}\label{thm:UEgenericinT}
            Let $(X,f)$ be an irreducible Smale space. The family of uniquely ergodic subsystems $\mathrm{UE}(X,f)$ is residual in $\overline{T'(X,f)}$.  
        \end{theorem}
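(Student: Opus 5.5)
Following the template of Theorems \ref{thm:MinimalgenericinT} and \ref{thm:Toeplitzdense}, the proof splits into a $G_\delta$ characterization of unique ergodicity and a density statement. For the $G_\delta$ part, fix a countable dense set $\{\phi_k\}_{k\in\N}\subseteq C(X,\mathbb{R})$; restrictions to any $Y\in S(X,f)$ are dense in $C(Y,\mathbb{R})$ by Tietze extension and uniform approximation. We use the characterization that $Y$ is uniquely ergodic if and only if, for every $k$, the ergodic averages $S_N\phi_k$ (for $f|_Y$) converge uniformly on $Y$ to a constant. Equivalently, for every $k,\ell\in\N$ there is $N$ with
\[
\sup_{y,y'\in Y}\bigl|S_N\phi_k(y)-S_N\phi_k(y')\bigr|<\tfrac{1}{\ell}.
\]
Necessity is the standard fact quoted before the theorem. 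For sufficiency, note that for any invariant $\mu$ we have $\int\phi_k\,d\mu=\int S_N\phi_k\,d\mu$, which lies in the interval $[\min_Y S_N\phi_k,\max_Y S_N\phi_k]$. This interval has length less than $1/\ell$, so any two invariant measures give integrals of $\phi_k$ differing by less than $1/\ell$ for all $\ell$. They therefore agree on a dense set of functions and coincide.

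Define $U(k,\ell,N)$ as the set of $Y\in S(X,f)$ satisfying the strict inequality above. Each $U(k,\ell,N)$ is open in $S(X,f)$. Indeed, $S_N\phi_k$ is uniformly continuous on $X$, and if the supremum over $Y$ is $<1/\ell-2\eta$, then every $Y'$ with $d_\mathcal{H}(Y,Y')<\delta$ lies in the $\delta$-neighborhood of $Y$. For suitable $\delta$, the values of $S_N\phi_k$ on $Y'$ are then within $\eta$ of values on $Y$, so the supremum over $Y'$ is $<1/\ell$. Thus
\[
\mathrm{UE}(X,f)=\bigcap_{k,\ell}\bigcup_N U(k,\ell,N)
\]
is $G_\delta$ in $S(X,f)$. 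It follows that $\mathrm{UE}(X,f)\cap\overline{T'(X,f)}$ is $G_\delta$ in the relative topology.

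Density comes from Theorem \ref{thm:Toeplitzdense}. Subsystems conjugate to regular Toeplitz subshifts are dense in $\overline{T'(X,f)}$, and regular Toeplitz subshifts are uniquely ergodic. Unique ergodicity is a conjugacy invariant, so these subsystems lie in $\mathrm{UE}(X,f)$. Alternatively, one can argue directly that $\pi_*$ maps uniquely ergodic subshifts to uniquely ergodic systems: any invariant measure on $\pi(Z)$ lifts to an invariant measure on $Z$, so a factor of a uniquely ergodic system is uniquely ergodic. One then combines this with the density of such subshifts in $\overline{T'(\Sigma,\sigma)}$ and the surjectivity in Lemma \ref{lem:transitivesurj}. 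Since $\overline{T'(X,f)}$ is compact, the Baire Category Theorem gives residuality.

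The main obstacle is the sufficiency direction of the characterization. One must check that controlling the oscillation of $S_N\phi_k$ for a single $N$ (rather than a limit) already forces all invariant measures to agree on $\phi_k$. The invariance argument $\int\phi_k\,d\mu=\int S_N\phi_k\,d\mu$ above handles this cleanly. The remaining steps are routine.
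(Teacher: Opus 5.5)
Your proposal is correct and follows the paper's proof essentially step for step. It uses the same $G_\delta$ description $\bigcap_{k,\ell}\bigcup_N U(k,\ell,N)$ via oscillation of the ergodic averages of a countable dense family, the same uniform-continuity argument for openness, and the same density input from Theorem~\ref{thm:Toeplitzdense} together with unique ergodicity of regular Toeplitz subshifts. Your observation that $\int\phi_k\,d\mu=\int S_N\phi_k\,d\mu$ forces all invariant measures to agree once a single $N$ has small oscillation is a welcome addition: the paper states the characterization with $\lim_{N\to\infty}$ and passes to the union over $N$ without comment.
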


        \begin{proof}
             Consider a countable collection $\{g_i\}_{i\in\N}$ of continuous functions that is dense in $C(X,\mathbb{R})$. Denote by $S_N g_i(x)$ the ergodic average at $x$ corresponding to $f$. By a standard density argument and Theorem 6.19 in \cite{Walters}, a subsystem $Y\in S(X,f)$ is uniquely ergodic if and only if
                \[\lim_{N\to\infty} \sup_{x\neq y \in Y}\left|S_Ng_i(x)-S_Ng_i(y)\right|=0 \quad \forall i\in \N.\]
            Equivalently, if we define
                \[O(i,n,N)\coloneq \left\{Y\in S(X,f) \mid \sup_{x\neq y \in Y}\left|S_Ng_i(x)-S_Ng_i(y)\right|<\dfrac{1}{n}\right\},\]
            then we can describe uniquely ergodic systems as
                \[\text{UE}(X,f)=\bigcap_{i\in\N}\bigcap_{n\in\N}\bigcup_{N\in\N} O(i,n,N).\]
            
            The set $O(i,n,N)$ is open in the Hausdorff metric: if $Y\in O(i,n,N)$, let 
                \[3\eta=\dfrac{1}{n}-\sup_{x\neq y \in Y}\left|S_Ng_i(x)-S_Ng_i(y)\right|.\] 
                
            Consider $\eps>0$ such that, for any $x,x'\in X$, $\left|g_i\circ f^k(x)-g_i\circ f^k(x')\right|<\eta$ for every $k\in \{0, 1,\ldots, N-1\}$ whenever $d(x,x')<\eps$. We verify that $B_\mathcal{H}(Y,\eps)\subseteq O(i,n,N)$.

            Let $Y'\in B_{\mathcal{H}}(Y,\eps)$. For every pair of points $x',y'\in Y'$ we can find $x,y\in Y$ so that $d(y,y'), d(x,x')<\eps$. Thus,
                \begin{align*}
                    \left|S_N g_i(x')-S_N g_i(y')\right|&\leq \left|S_N g_i(x')-S_N g_i(x)\right|+\left|S_Ng_i(x)-S_N g_i(y)\right|\\ &+\left|S_Ng_i(y)-S_Ng_i(y')\right| 
                    \leq \eta+\left|S_Ng_i(x)-S_N g_i(y)\right|+\eta \\
                    &<3\eta+\sup_{x\neq y\in Y}\left|S_Ng_i(x)-S_N g_i(y)\right|=\dfrac{1}{n},
                \end{align*}
            and, consequently, $Y'\in O(i,n,N)$.
            It follows that $\text{UE}(X,f)$ is a $G_\delta$ set of $S(X,f)$, which also implies that $\text{UE}(X,f)\cap \overline{T'(X,f)}$ is a $G_\delta$ set in the relative topology of $\overline{T'(X,f)}$.
            
            Regular Toeplitz subshifts are always uniquely ergodic (Theorem 2.5 in \cite{DownarowiczKasjan}). Since this property is invariant under topological conjugacy, the family of subsystems that was proved dense in $\overline{T'(X,f)}$ in Theorem \ref{thm:Toeplitzdense} satisfies unique ergodicity. Therefore, $\text{UE}(X,f)$ is also dense in $\overline{T'(X,f)}$.
        \end{proof}

        The family of subsystems of $(X,f)$ having zero topological entropy, which we denote by $\text{ZE}(X,f)$, turns out to be generic as well. A thorough treatment of topological entropy in the compact metric setting can be found in \cite{Walters}.

        In order to prove genericity of such a subsystem, we need to borrow tools from the measure-theoretical realm. The following result has an elementary proof that we provide for the sake of completeness.

        \begin{lemma}\label{lem:entropyuppersemi}
            Let $(X,f)$ be an irreducible Smale space. The entropy function $h\colon S(X,f)\to \mathbb{R}$, $Y\mapsto h(f,Y)$ is upper-semicontinuous.
        \end{lemma}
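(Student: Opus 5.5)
We need to show that for every $Y\in S(X,f)$ and every $\eta>0$ there is $\delta>0$ such that $d_\mathcal{H}(Y,Y')<\delta$ implies $h(f,Y')<h(f,Y)+\eta$. The plan is to exploit expansivity of the ambient Smale space, which gives a single scale at which entropy is computed for all subsystems simultaneously, and then to compare separated sets in $Y'$ with open covers of a neighborhood of $Y$.

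\emph{Step 1: one scale for all subsystems.} Fix $\eps\in(0,c_f/4)$. Every $Y'\in S(X,f)$ is expansive with constant $c_f$. I would use the spanning-set (or open-cover) formulation: for an expansive system, any finite open cover with diameter below $c_f$ is a generator. Hence $h(f,Y')=\lim_n \frac1n\log N_n(Y')$, where $N_n(Y')$ is the minimal number of elements of $\mathcal{U}^{(n)}\coloneq\bigvee_{j=0}^{n-1}f^{-j}\mathcal{U}$ needed to cover $Y'$, for a fixed finite open cover $\mathcal{U}$ of $X$ with diameter less than $\eps$. The limit exists by subadditivity, so $h(f,Y')=\inf_n \frac1n\log N_n(Y')$. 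This infimum formula is the key point.

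\emph{Step 2: the upper bound is open in $Y$.} Given $\eta>0$, choose $n$ with $\frac1n\log N_n(Y)<h(f,Y)+\eta$. Let $\mathcal{V}\subseteq\mathcal{U}^{(n)}$ be a minimal subcover of $Y$, and let $V=\bigcup\mathcal{V}$, which is an open set containing the compact set $Y$. Then there is $\delta>0$ such that the $\delta$-neighborhood of $Y$ lies in $V$. If $d_\mathcal{H}(Y,Y')<\delta$, then $Y'\subseteq V$, so $N_n(Y')\le N_n(Y)$. Therefore
\[
h(f,Y')\le \frac1n\log N_n(Y')\le \frac1n\log N_n(Y)<h(f,Y)+\eta,
\]
which gives upper semicontinuity. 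This is the same neighborhood argument used for $B(j,N)$ in Theorem \ref{thm:MinimalgenericinT}.

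\emph{Main obstacle.} The delicate point is Step 1: justifying that a single cover $\mathcal{U}$ of small diameter computes $h(f|_{Y'})$ exactly for every subsystem $Y'$, rather than only up to a limit $\eps\to0$ that could depend on $Y'$. I would invoke the standard theorem (Walters, Theorem 7.11) that for an expansive homeomorphism with constant $c$, any open cover with diameter $<c$ is a generator, and hence $h(f,Y')=h(f|_{Y'},\mathcal{U}|_{Y'})$. The restriction $\mathcal{U}|_{Y'}$ is a cover of $Y'$ by relatively open sets of diameter $<c_f$, so the theorem applies uniformly in $Y'$.

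Alternatively, one could use the $(n,\eps)$-separated-set formula recalled in the paper together with the comparison $S_{Y'}(n,2\eps)\le$ (spanning number of $Y$ at scale $\eps$) for $Y'$ Hausdorff-close to $Y$. However, that route requires $n$-dependent closeness, and the infimum via subadditivity is much cleaner.
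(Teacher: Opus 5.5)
Your proof is correct, but it takes a different route from the paper.

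\textbf{The paper's route.} The argument there is measure-theoretic. Given $Y_n\to Y$, it uses the variational principle to pick invariant measures $\mu_{n_j}$ supported on $Y_{n_j}$ with nearly maximal entropy. It passes to a weak-$*$ limit $\mu$ and checks that $\supp\mu\subseteq Y$. It then concludes from upper semicontinuity of $\mu\mapsto h_\mu$ for expansive maps, together with the variational principle on $Y$.

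\textbf{Your route.} You stay entirely at the topological level. Expansivity makes one fixed open cover $\mathcal{U}$ of diameter below $c_f$ a generator for every subsystem at once. Subadditivity then writes $h(f,\cdot)$ as $\inf_n\frac1n\log N_n(\cdot)$. Each map $Y\mapsto N_n(Y)$ is upper semicontinuous, because a minimal subcover of $Y$ still covers every Hausdorff-close $Y'$. An infimum of upper semicontinuous functions is upper semicontinuous, which finishes the proof.

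\textbf{Comparison.} Both proofs use expansivity at essentially the same point: upper semicontinuity of $\mu\mapsto h_\mu$ for expansive maps is itself proved via small-diameter generating partitions. Both arguments also work verbatim for any expansive homeomorphism of a compact metric space.
\begin{itemize}
\item Your version is more elementary: it needs no variational principle, no weak-$*$ compactness, and no semicontinuity of metric entropy.
\item It gives explicit neighborhoods.
\item It displays directly the $G_\delta$ structure used in Theorem~\ref{thm:zeroentropy}, since $\{Y : h(f,Y)<1/k\}=\bigcup_n\{Y : N_n(Y)<e^{n/k}\}$ is visibly a union of open sets.
\end{itemize}
The paper's version is shorter once the standard black boxes are granted. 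Its support argument for weak-$*$ limits of measures carried by $Y_n$ is also reused later, in the continuity of $Y\mapsto\mu_Y$.
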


        \begin{proof}
            Upper-semicontinuity of $h$ is equivalent to the following statement: if $Y_n\to Y$ in the Hausdorff metric, then $\limsup_{n\to\infty}h(f,Y_n)\leq h(f,Y)$. 

            Let $\{Y_n\}_{n\in\N}\subseteq S(X,f)$ converge to some $Y\in S(X,f)$. For $n\in\N$ fixed, the variational principle for the entropy says that
                \[h(f,Y_n)=\sup\{h_{\mu_n} \mid \mu_n \,f\text{-invariant Borel probability, }\supp\mu_n\subseteq Y_n\},\]
            where $h_\mu$ denotes the metric entropy of the measure $\mu$. 

            Consider an increasing subsequence $n_j$ such that 
                \[\lim_{j\to\infty}h(f,Y_{n_j})=\limsup_{n\to\infty}h(f,Y_n).\] 
            For each $n_j$, the variational principle allow us to choose $\mu_{n_j}$ supported on $Y_{n_j}$ so that according to the variational principle $h_{\mu_{n_j}}>h(f,Y_{n_j})-\frac{1}{n_j}$. The weak-\textasteriskcentered\ compactness of the space of $f$-invariant measures on $X$ provides a subsequence convergent to some $f$-invariant $\mu$. Without loss of generality, denote this subsequence by $\mu_{n_j}$.

            The map $f$ is expansive, so the metric entropy (defined in the space of $f$-invariant Borel probability measures) is upper-semicontinuous in the weak-\textasteriskcentered\ topology. Consequently, $\limsup_{j\to\infty}h_{\mu_{n_j}}\leq h_\mu$.

            The limiting measure $\mu$ is supported on $Y$: let $y_0\in Y^c$ and $r>0$ such that $B(y_0,r)\subseteq Y^c$. Let $J\in\N$ be so that $d(Y_{n_j},Y)<\frac{r}{2}$ for every $j\geq J$. It must be that $B(y_0,r/2)\subseteq Y_{n_j}^c$ for every $j\geq J$. Let $g\in C(X,[0,1])$ such that $g\equiv 1$ on $B(y_0,r/4)$ and $g\equiv 0$ on $B(y_0,r/2)^c$. Then, $(\supp g)\cap Y_{n_j}=\emptyset$ and 
            
                \[0=\int g\,d\mu_{n_j}\xrightarrow{j\to\infty}\int g\,d\mu \,\Longrightarrow\, 0=\int g\,d\mu \geq \mu(B(y_0,r/4)),\]
            which proves that $y_0\in (\supp \mu)^c$.

            By the variational principle, $h_\mu\leq h(f,Y)$ and 
                \[\limsup_{n\to\infty}h(f,Y_n)=\lim_{j\to\infty}h(f,Y_{n_j})=\lim_{j\to\infty}h_{\mu_{n_j}}\leq h_\mu\leq h(f,Y),\]
            as desired.
        \end{proof}

        \begin{theorem}\label{thm:zeroentropy}
            Let $(X,f)$ be an irreducible Smale space. The family of subsystems with zero topological entropy $\mathrm{ZE}(X,f)$ is residual in $\overline{T'(X,f)}$.
        \end{theorem}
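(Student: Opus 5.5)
The proof has two parts: show that $\mathrm{ZE}(X,f)$ is a $G_\delta$ in $S(X,f)$ using the upper-semicontinuity of Lemma \ref{lem:entropyuppersemi}, and show that $\mathrm{ZE}(X,f)\cap\overline{T'(X,f)}$ is dense using the Toeplitz density of Theorem \ref{thm:Toeplitzdense}. Residuality in $\overline{T'(X,f)}$ then follows.

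\textbf{The $G_\delta$ part.} For each $k\in\N$ define
\[
E_k\coloneq\{Y\in S(X,f)\mid h(f,Y)<1/k\}.
\]
Since $h$ is upper-semicontinuous on $S(X,f)$ by Lemma \ref{lem:entropyuppersemi}, each sublevel set $E_k$ is open in the Hausdorff metric. Entropy is nonnegative, so
\[
\mathrm{ZE}(X,f)=\bigcap_{k\in\N}E_k,
\]
which is a $G_\delta$ set in $S(X,f)$. Intersecting with the closed set $\overline{T'(X,f)}$ gives a relative $G_\delta$ in $\overline{T'(X,f)}$.

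\textbf{The density part.} By Theorem \ref{thm:Toeplitzdense}, subsystems conjugate to regular Toeplitz subshifts are dense in $\overline{T'(X,f)}$. Regular Toeplitz subshifts have zero topological entropy. One way to see this: they are uniquely ergodic, and the unique measure is isomorphic to the odometer, which has zero measure-theoretic entropy. The variational principle then gives zero topological entropy. Topological entropy is a conjugacy invariant, so every subsystem in this dense family lies in $\mathrm{ZE}(X,f)$.

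Alternatively, one can transfer directly from the symbolic side. Use the fact that finite-to-one factor maps preserve entropy, as already used in Lemma \ref{lem:Tperfect}. Together with density of zero-entropy subshifts in $\overline{T'(\Sigma,\sigma)}$ and the surjectivity and continuity of $\pi_*$ from Lemma \ref{lem:transitivesurj}, this also shows that $\pi_*$ maps a dense set of zero-entropy subshifts onto a dense subset of $\overline{T'(X,f)}$ consisting of zero-entropy subsystems.

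\textbf{Conclusion and main obstacle.} Baire's theorem applies in the compact space $\overline{T'(X,f)}$, so the dense relative $G_\delta$ set $\mathrm{ZE}(X,f)\cap\overline{T'(X,f)}$ is residual. The only nontrivial step is the openness of the sets $E_k$. This relies on upper-semicontinuity, which in turn depends on the expansivity of $f$ through the upper-semicontinuity of the measure-theoretic entropy map; that input is already supplied by Lemma \ref{lem:entropyuppersemi}. Everything else is routine.
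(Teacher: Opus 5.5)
Your proposal is correct and follows the paper's own proof: you get the $G_\delta$ part from the openness of the sublevel sets $\{Y : h(f,Y)<1/k\}$, which comes from the upper-semicontinuity in Lemma \ref{lem:entropyuppersemi}, and you get density from the dense Toeplitz-conjugate family of Theorem \ref{thm:Toeplitzdense}. Your justification that regular Toeplitz subshifts have zero entropy (unique ergodicity, measure-theoretic isomorphism with the odometer, then the variational principle) and your alternative symbolic-transfer argument are both sound; they fill in details the paper only asserts.
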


        \begin{proof}
            The density of $\text{ZE}(X,f)$ in $\overline{T'(X,f)}$ is a direct consequence of Theorem \ref{thm:Toeplitzdense}, since every regular Toeplitz subshift has zero topological entropy. 

            In order to prove that $\text{ZE}(X,f)$ is a $G_\delta$ set, we use Lemma \ref{lem:entropyuppersemi}: the sets $\{Y\in S(X,f) \mid h(Y)<\frac{1}{n}\}$ are open in the Hausdorff metric for each $n\in\N$, which follows from the upper-semicontinuity of $h\colon S(X,f)\to \mathbb{R}$. Then, we can write
                \[\text{ZE}(X,f)=\bigcap_{n\in\N} \left\{Y\in S(X,f)\mid h(Y)<\frac{1}{n} \right\},\]
            which proves that $\text{ZE}(X,f)$ is a $G_\delta$ set in $S(X,f)$. Consequently, $\text{ZE}(X,f)$ is a $G_\delta$ set in the induced topology on $\overline{T'(X,f)}$.
        \end{proof}

        \subsection{Symbolic complexity and slow entropy}

            We have just established genericity of subsystems with zero topological entropy within the locus of $\overline{T'(X,f)}$. This result combined with Corollary \ref{cor:symbolic} allows us to ask how fast the complexity of a typical subsystem grows. Recall that, when $Y$ is a subshift, we can compute the topological entropy by
                \[h(\sigma,Y)=\lim_{n\to\infty}\dfrac{\log(C(Y,n))}{n},\] 
            where $C(Y,n)$ denotes the complexity function for $Y$, i.e., $C(Y,n)$ is the total number of admissible words of length $n$ of $Y$. Theorem \ref{thm:zeroentropy} says that the growth rate of $C(Y,n)$ is, generically, subexponential, namely, $\lim_{n\to\infty}\log C(Y,n)/n=0$. In this section, we show that a typical subsystem in $\overline{T'(X,f)}$ exhibits wildly oscillating complexity. We make this idea precise throughout the section. 

            The idea of a wildly oscillating complexity function previously appeared in \cite{PavlovSchmieding}, in Proposition 5.23. We give the content of this proposition below, rephrased in our setting.
            \begin{theorem}[Proposition 5.23, \cite{PavlovSchmieding}]
              For a generic subshift $Z\in\overline{T'(\Sigma,\sigma)}$, where $\Sigma$ is an SFT, the following holds:
            \begin{itemize}
                \item For all $\gamma>1$, there is a subsequence $n_k$ so that ${\displaystyle\lim_{k\to\infty}}\frac{C(Z,{n_k})}{n_k}=\gamma$,
                \item For all $\gamma\geq 1$, there is a subsequence $n_k$ so that ${\displaystyle\lim_{k\to\infty}}\frac{\log(C(Z,n_k))}{\log n_k}=\gamma$,
                \item For all $\gamma\in (0,1)$, there is a subsequence $n_k$ so that ${\displaystyle\lim_{k\to\infty}}\frac{\log\log(C(Z,n_k))}{\log n_k}=\gamma$.
            \end{itemize}  
            \end{theorem}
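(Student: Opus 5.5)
The plan is to prove each of the three items separately, in the same two steps used throughout the paper: first show that the relevant family of subshifts is a $G_\delta$ subset of $\overline{T'(\Sigma,\sigma)}$, then show that it is dense there. The $G_\delta$ step is soft. It rests on the fact, recalled in Subsection~\ref{subsec:Preliminaries}, that in $S(\Sigma,\sigma)$ one has $d_\mathcal{H}(Z,Z')<2^{-n}$ exactly when $\mathcal{L}_n(Z)=\mathcal{L}_n(Z')$. Hence $Z\mapsto C(Z,n)$ is locally constant for each fixed $n$. For a rational $q>1$ and $k,N\in\N$ set
\[ U(q,k,N)\coloneq\Bigl\{Z\in\overline{T'(\Sigma,\sigma)} \bigm| \exists\, n\geq N \text{ with } \Bigl|\tfrac{C(Z,n)}{n}-q\Bigr|<\tfrac1k\Bigr\}. \]
This set is a union of sets on which $C(\cdot,n)$ is constant, so it is open. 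The intersection over $q\in\mathbb{Q}\cap(1,\infty)$ and $k,N\in\N$ is therefore a $G_\delta$ set.

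A subshift in this intersection has every rational $q>1$ as an accumulation point of $C(Z,n)/n$. The set of accumulation points of a real sequence is closed, so every real $\gamma\geq 1$, and in particular every $\gamma>1$, is a subsequential limit; a diagonal choice of $n_k$ gives the required subsequence. The same argument, with $\log C(Z,n)/\log n$ and rationals $q\geq1$, handles the second item. With $\log\log C(Z,n)/\log n$ and rationals $q\in(0,1)$ it handles the third. The three residual sets are then intersected.

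For density, I fix $Z\in\overline{T'(\Sigma,\sigma)}$ and $m\in\N$, one target value $q$, and $N$. I must produce $Z'\in\overline{T'(\Sigma,\sigma)}$ with $\mathcal{L}_m(Z')=\mathcal{L}_m(Z)$ and some $n\geq N$ at which the relevant ratio is within $1/k$ of $q$. By the density results already available (Theorem~\ref{thm:MinimalgenericinT} and Lemma~5.3 of \cite{PavlovSchmieding}, applied through $m$-step SFT approximations), I may assume $Z$ is an irreducible SFT $\Sigma_Z$ of positive entropy. More precisely, I assume it is the $m$-step approximation of a transitive subshift and agrees with that subshift at level $m$.

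Inside $\Sigma_Z$ I build $Z'$ as the orbit closure of a single point with two regimes. First, for each word of length $m$ of $\Sigma_Z$ I insert a connecting block, using irreducibility, so that the language at level $m$ is preserved. Second, I add very long stretches of a low-complexity pattern, for instance a long power $u^{L}$ of one cycle of $\Sigma_Z$, or a Sturmian-type coding along two cycles, which is possible because $\Sigma_Z$ has positive entropy. These stretches are glued in a Toeplitz-like hierarchical way, so that $Z'$ is minimal and hence lies in $T'(\Sigma,\sigma)$.

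The resulting $Z'$ has the following behaviour. At moderate lengths $n_0\geq N$, $C(Z',n_0)$ is large compared with the target scale: $C(Z',n_0)/n_0\gg q$, and likewise for the log and log-log ratios. At a much larger length $n_1$, only the low-complexity stretches contribute new words, and the ratio has dropped below $q$. The key intermediate claim is that on the window $[n_0,n_1]$ the first difference $s(n)=C(Z',n+1)-C(Z',n)$ is bounded by a constant $K$ that depends only on the number of gluing blocks, not on $n$. Granting this claim, the ratio $C(Z',n)/n$ moves by at most $O(K/n)$ per step, so a discrete intermediate value argument yields some $n\in[n_0,n_1]$ with $|C(Z',n)/n-q|<1/k$. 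The log and log-log ratios change even more slowly once $C(Z',n)$ has polynomial or subexponential size, so the same argument applies; there the scale $n_0$ is chosen so that the high-complexity regime gives $\log C\approx n^{\gamma'}$ with $\gamma'>q$.

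The main obstacle is this last construction. The difficulty is to keep simultaneously $\mathcal{L}_m(Z')=\mathcal{L}_m(Z)$, transitivity or minimality, and the bounded first difference on a long window. The linear scale $\gamma$ close to $1$ is the most delicate case, since it forces $s(n)$ to be essentially $1$ on most of the window. My first attempt would take the long stretches to be Sturmian codings, which have $s(n)=1$, and choose the hierarchy lengths growing superexponentially. Then all the finitely many gluing contexts only add a bounded number of words per length, and they stop contributing new words beyond a length much smaller than $n_1$.
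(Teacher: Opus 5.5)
The paper does not prove this statement. It is quoted from \cite{PavlovSchmieding} and used as a black box, so your argument has to stand on its own.

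Your $G_\delta$ half is correct. $C(\cdot,n)$ is locally constant on $S(\Sigma,\sigma)$, so the sets $U(q,k,N)$ are open. Closedness of the set of subsequential limits then passes from rational to real targets.

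The gap is the density half, which is the whole content of the proposition. You reduce it to an unproved ``key intermediate claim'' about a construction you do not specify, and two things fail as written.

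First, nothing in your two ingredients produces high complexity at a prescribed scale $n_0\ge N$. The connecting blocks are $C(\Sigma_Z,m)$ words of length bounded independently of $q$ and $N$, and the remaining stretches have low complexity. So neither $C(Z',n_0)/n_0\gg q$ nor $\log C(Z',n_0)\approx n_0^{\gamma'}$ with $\gamma'$ near $1$ follows. To get them you must deliberately insert an enormous family of distinct $\Sigma_Z$-words at scale $n_0$. That is where the exponential growth of $\mathcal{L}_n(\Sigma_Z)$ has to be used.

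Second, your justification for $s(n)\le K$ is wrong. $C(Z',n)$ counts distinct words regardless of frequency, so making higher levels of the hierarchy superexponentially sparse does not stop them from contributing. A single new gluing context can contribute about $n$ new words of length $n$. An infinite Toeplitz-like hierarchy has infinitely many such contexts unless each level exactly reproduces the earlier ones. For $q$ near $1$ this is fatal, since you need $C(Z',n_1)<qn_1$.

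The bounded increment is also unnecessary. The missing idea is to decouple the two regimes. For infinite $Z$ one has $n+1\le C(Z,n)\le C(Z,n+1)\le|\mathcal{A}|\,C(Z,n)$, and therefore
\[
\frac{C(Z,n+1)}{n+1}\ge\frac{n}{n+1}\cdot\frac{C(Z,n)}{n}.
\]
So the linear ratio cannot jump downward across $\gamma$. The increments of $\log C(Z,n)/\log n$ and $\log\log C(Z,n)/\log n$ tend to $0$.

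Hence all three items follow from two conditions: $\liminf_n C(Z,n)/n=1$ and $\limsup_n\log\log C(Z,n)/\log n=1$. Start at a large $n$ where the ratio exceeds $\gamma$, and stop at the first later $n'$ where it drops below $\gamma$. Both conditions are $G_\delta$ by your own argument.

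Density of the second condition is free. For $m$ large, the $m$-step approximation $Z^{(m)}\subseteq\Sigma$ is an irreducible SFT of entropy $h>0$, so $C(Z^{(m)},n)\ge e^{hn}$.

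Density of the first condition needs only one low-complexity construction:
\begin{enumerate}
\item Pick a vertex of $\Gamma(Z,m)$ of out-degree at least two.
\item Pick closed walks $u,v$ at that vertex that leave along different edges, with $u$ traversing every edge.
\item Let $Z'$ be the orbit closure of the image of a Sturmian sequence under $a\mapsto u$, $b\mapsto v$.
\end{enumerate}
Then $Z'\subseteq Z^{(m)}$ is minimal and $\mathcal{L}_m(Z')=\mathcal{L}_m(Z)$. Since $uv\neq vu$, Cassaigne's characterization of quasi-Sturmian sequences gives $C(Z',n)=n+c$ for all large $n$.
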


            Generically, the word complexity of a subshift can grow at any linear rate $\gamma n$, at any polynomial scale $n^\gamma$, and at any stretched-exponential scale $e^{n^\gamma}$, as long as we restrict $n$ to a subsequence. We see a similar behavior for the typical subsystem in $\overline{T'(X,f)}$. Note that all these scales are subexponential.

            Our first result in this direction allows us to bound the symbolic complexity of a subshift in $S(X,f)$ by the complexity of an extension of that subshift in $S(\Sigma,\sigma)$.
            
            \begin{proposition}\label{prop:complexitybound}
                Let $\pi\colon \Sigma\to X$ be a finite-to-one, almost one-to-one factor map between an irreducible SFT $(\Sigma,\sigma)$ and an irreducible Smale space $(X,f)$. Let $Z\in \mathrm{M}(X,f)$ and $Y\in\pi_*^{-1}(Z)\subseteq S(\Sigma,\sigma)$ be subsystems of $X$ and $\Sigma$ respectively. Then, for every subshift $Z'$ in the conjugacy class of $Z$ there exist $M>0$ and $k=k(Z,Z',Y,\pi)$ such that 
                    \[M^{-1}\cdot C(Z',n-k)\leq C(Y,n)\leq M \cdot C(Z',n+k)\]
                for all $n$ sufficiently large.  
            \end{proposition}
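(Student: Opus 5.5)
Let $\psi\colon Z\to Z'$ be a conjugacy, which exists because $Z$ is minimal and expansive by \cite{Mane}. Then $\Phi\coloneq\psi\circ\pi|_Y\colon Y\to Z'$ is a factor map between subshifts. By the Curtis--Lyndon--Hedlund theorem it is a sliding block code with some memory and anticipation bounded by $r\ge 0$. The plan is to compare the languages of $Y$ and $Z'$ through $\Phi$ in both directions. For the upper bound we use that $\Phi$ is bounded-to-one, since $\pi$ is. For the lower bound we use surjectivity of $\Phi$. I will take $k=2r$, possibly enlarged by a bounded amount coming from the lower bound.

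\emph{Lower bound.} Since $\Phi$ is surjective, every word $v\in\mathcal{L}_{n-2r}(Z')$ is the image under the block map of some word $u\in\mathcal{L}_{n}(Y)$. The induced map $\mathcal{L}_n(Y)\to\mathcal{L}_{n-2r}(Z')$ is therefore onto. This gives $C(Z',n-2r)\le C(Y,n)$, and $M\ge1$ suffices here.

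\emph{Upper bound.} This is the main obstacle. A block code can collapse many words, and that is only controlled because $\pi$ is finite-to-one. I would first try to show that the block map $\mathcal{L}_{n}(Y)\to\mathcal{L}_{n-2r}(Z')$ is uniformly bounded-to-one, meaning each image word has at most $M$ preimages for all large $n$.

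The natural tool is the standard fact that a finite-to-one factor map from an irreducible SFT has no diamonds. Here $Y$ is merely a subshift of $\Sigma$, so I would work with $\pi\colon\Sigma\to X$ directly. Because $\psi$ is a conjugacy between $Z$ and the subshift $Z'$, the $Z'$-word $v$ of length $n-2r$ determines the point's position up to a fixed precision $\delta$ along $n-O(1)$ consecutive iterates. By expansivity and the Markov partition structure, points of $\Sigma$ whose images are $\delta$-close on a long window $[0,n]$ can only have boundedly many distinct codings in the central part of the window. Outside the central part, the codings have at most $|\mathcal{A}|^{2k_0}$ choices at the two ends, where $k_0$ is an expansivity or synchronization constant.

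Concretely, the no-diamond property (equivalently, the bounded-to-one property of $\pi$ with bound $D=\sup|\pi^{-1}(x)|$) should give the following: two $\Sigma$-words of length $n$ that agree on their first and last $k_0$ symbols and whose images are shadowing-close must coincide. Counting then yields
\[
C(Y,n)\le D\,|\mathcal{A}|^{2k_0}\,C(Z',n+2k_0).
\]
This enlarges $k$ and sets $M=D|\mathcal{A}|^{2k_0}$.

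The delicate point is making the "close images imply equal central coding" step rigorous, via a compactness argument on diamonds in $\Sigma$. I would argue by contradiction. Suppose there are unboundedly many preimage words with the same boundary symbols. Passing to a limit produces either a diamond in $\Sigma$ or more than $D$ preimages of a single point, contradicting finite-to-oneness of $\pi$.
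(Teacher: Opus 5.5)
Your proposal is correct, but it takes a genuinely different route from the paper.

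\textbf{The paper's route.} The paper never splices words inside $\Sigma$. It invokes Putnam's lifting theorem to factor $\pi$ through $u$-resolving maps $\alpha,\beta$ and an $s$-resolving map $\tilde\pi$. It then shows that $\alpha^{-1}(Y)$ and $\tilde\pi(\alpha^{-1}(Y))$ are totally disconnected, hence conjugate to subshifts $A$, $B$. Finally it applies three times a lemma: a left- or right-closing, bounded-to-one code between \emph{arbitrary} subshifts satisfies $C(Y,n-k)\le C(X,n)\le M\,C(Y,n+k)$. The closing hypothesis drives that lemma. In the non-SFT subshifts $A$, $B$, $Y$ one cannot splice words into a common context, so the contradiction has to be extracted from limit points.

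\textbf{Your route, and a cleaner finish.} Your key observation is that the splicing can be done in the ambient SFT $\Sigma$, even though $Y$ is not of finite type, with expansivity of $f$ turning closeness of images into equality. The cleanest finish is direct, and makes both your final compactness paragraph and the appeal to the no-diamond property unnecessary.

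Suppose $u_1,\dots,u_{D+1}\in\mathcal{L}_n(Y)$ are distinct, share their first and last $k_0$ symbols, and have the same block image. Insert them between the tails of a point of $Y$ realizing $u_1$. The resulting points of $\Sigma$ agree off $[k_0,n-k_0)$. Let $\rho$ be a continuity radius of $\pi$.
\begin{itemize}
\item At times outside $[k_0-\rho,n-k_0+\rho)$, their images are close because the codings agree on a long window.
\item At times inside that interval, each spliced point has image close to that of a point of $Y$ realizing the same word. Those points of $Y$ have close images because their $Z'$-words agree and $\psi^{-1}$ is uniformly continuous.
\end{itemize}
Take $k_0$ at least the step of $\Sigma$, at least $2\rho$, and at least $\rho+r+R$, where $R$ is a continuity radius of $\psi^{-1}$. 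Then expansivity yields $D+1$ distinct points in a single fiber of $\pi$, a contradiction.

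This gives exactly your constant $D|\mathcal{A}|^{2k_0}$. In fact it shows that the block map $\mathcal{L}_n(Y)\to\mathcal{L}_{n-2r}(Z')$ is itself at most $D|\mathcal{A}|^{2k_0}$-to-one, so you do not need to enlarge the window.

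If you prefer the sharper claim that two such words must coincide, you need the no-diamond property for a map onto a Smale space. That property holds: factor maps of Smale spaces commute with the brackets at small scales, and irreducibility of $\Sigma$ then lets you swap a diamond block independently infinitely often. But it is not the Lind--Marcus statement, which concerns codes into shift spaces.

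\textbf{What each route buys.} Yours is elementary. It uses only that $\Sigma$ is an SFT, that $\pi$ is bounded-to-one, and that $f$ is expansive: no lifting theorem, and no almost one-to-one or irreducibility hypotheses. It also produces explicit constants, with $M=1$ on the lower side. The paper's route yields a free-standing complexity lemma for closing codes between arbitrary subshifts, usable when no ambient SFT is available for splicing.
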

            
            In order to prove the proposition, we first establish some auxiliary results concerning subshifts. Let $\pi:X\to Y$ be a surjective sliding block code between two-sided subshifts $X$ and $Y$, with memory $m$ and anticipation $a$, and let $\sigma$ denote the left-shift on both spaces. 

            Assume, moreover, that $\pi$ is left-closing; that is, 
                \[\pi(x)=\pi(x')\quad \text{and} \quad x_{[1,\infty)}=x'_{[1,\infty) } \,\Longrightarrow\, x=x'.\] 
                
            Equivalently, $\pi$ is injective on global stable sets. We also assume that the fibers of $\pi$ are uniformly bounded: there exists $M\geq 1$ such that $\left|\pi^{-1}(y)\right|\leq M$ for every $y\in Y$. 

            For a subshift $Z$, let $\mathcal{L}_n(Z)\coloneq \{z_{[0,n-1]} \mid z\in Z\}$ denote its language of words of length $n$, so that $C(Z,n)=\left|\mathcal{L}_n(Z)\right|$. 

            \begin{lemma}\label{lem:complexityboundsubshift}
                Let $\pi\colon X\to Y$ be a surjective sliding block code between subshifts, with memory $m$ and anticipation $a$. Suppose that $\pi$ is left-closing and that $\left|\pi^{-1}(y)\right|\leq M$ for every $y\in Y$. Then, there exists $k\in\N_0$ such that, for all sufficiently large $n$,
                    \[C(Y,n-k)\leq C(X,n)\leq M\cdot C(Y,n+k).\]
                
                    If $\pi$ is right-closing instead, the same estimate holds.
            \end{lemma}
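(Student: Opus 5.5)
The lower bound is a counting statement about the block map; the upper bound is where left-closing enters. Let $\Phi$ be the block map of $\pi$, with window $m+a+1$. Then every $Y$-word of length $n-(m+a)$ is $\Phi(u)$ for some $X$-word $u$ of length $n$. Indeed, any $y\in Y$ has a preimage $x$ by surjectivity, and $y_{[0,n-m-a)}$ is determined by $x_{[-m,n-m)}$, up to a shift of indices. So $u\mapsto \Phi(u)$ maps $\mathcal{L}_n(X)$ onto $\mathcal{L}_{n-m-a}(Y)$. This gives $C(Y,n-k)\le C(X,n)$ for any $k\ge m+a$.

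For the upper bound I will use the left-closing hypothesis through its standard uniform version. If $\pi$ is left-closing, there is a delay $D\ge 0$ with the following property: if $x,x'\in X$ satisfy $x_{[1,\infty)}$ agreeing with $x'_{[1,\infty)}$ only up to being arbitrary, $x_{(-\infty,0]}$ and $x'_{(-\infty,0]}$ arbitrary, but $\pi(x)_{[-D,D]}=\pi(x')_{[-D,D]}$ and $x_{[1,D]}=x'_{[1,D]}$... I will instead state it in the form I need. There is $D$ such that any two $X$-words of length $2D+1$ with the same image under $\Phi$ and the same last symbol agree in their central symbol. This is the standard compactness reformulation of left-closing, cf. Lind--Marcus, Proposition 8.1.9. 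I prove it by contradiction with a compactness argument. Take a sequence of counterexamples with $D\to\infty$, shift them, and pass to limits. This produces $x\ne x'$ with $\pi(x)=\pi(x')$ and $x_{[j,\infty)}=x'_{[j,\infty)}$ for some $j$. That contradicts left-closing after applying $\sigma^{j-1}$.

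Iterating the delay property from right to left, an $X$-word of length $n$ is determined, in its first $n-2D$ symbols, by two pieces of data. The first is its image under $\Phi$ extended slightly, which is a $Y$-word of length roughly $n+k$ if I read the word inside a slightly longer $X$-word. The second is a suffix of bounded length $2D$. Thus the number of $X$-words of length $n-2D$ is at most $C(Y,n+k')\cdot |\mathcal{A}|^{2D}$. This yields $C(X,n)\le M'\,C(Y,n+k)$ with the constant $M'=|\mathcal{A}|^{2D}$.

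The constant $M'$ obtained this way is not the fiber bound $M$ that the statement asks for, and this refinement is the main obstacle. To get exactly $M$, I refine the count using bounded fibers. Fix a $Y$-word $w$ of length $n+k$. Suppose more than $M$ distinct central $X$-words of length $n$ lie over $w$. For large $n$, a compactness argument then produces more than $M$ distinct points of $X$ in one fiber $\pi^{-1}(y)$. To carry this out, I first use the delay property to show that distinct $X$-words over $w$ remain distinct on a bounded tail window. Then I extend each of them to a full point over a common extension $y$ of $w$. The common extension is possible because the words are determined by their image together with the tail. If this step proves delicate, I will accept a constant $M\cdot|\mathcal{A}|^{2D}$, since Proposition~\ref{prop:complexitybound} only needs some constant. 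The right-closing case follows by the symmetric argument, reversing the direction of time, or equivalently by applying the left-closing case to the transposed shifts.
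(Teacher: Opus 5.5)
Your lower bound is correct and is the paper's argument. The upper bound has two gaps.

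First, the delay property you state does not hold for general subshifts. You claim that two $X$-words of length $2D+1$ with the same $\Phi$-image and the same \emph{last symbol} have the same central symbol, and your compactness sketch does not establish it. If you center the counterexamples, the single shared symbol escapes to $+\infty$, so the limits need not be right-asymptotic. If you shift to the right end, the images agree only on a left ray.

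Here is a concrete counterexample. Let $X$ be the closure of all bi-infinite concatenations of blocks $a^jcd$ and $b^jce$ ($j\geq 1$), and let $\pi$ be the $1$-block code sending $a,b\mapsto\alpha$ and fixing $c,d,e$. A terminating $\alpha$-run has its letter fixed by the image symbol ($d$ or $e$) two places after it. A run extending to $+\infty$ is fixed by $x_{[1,\infty)}$. Hence $\pi$ is left-closing and at most $2$-to-$1$. Yet $a^{2D}c$ and $b^{2D}c$ violate your property for every $D$; their centered limits are $a^\infty\neq b^\infty$, with the same image.

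The one-symbol version does hold for a $1$-block code on a $1$-step SFT. Here, though, $X$ is an arbitrary subshift, and in the application $A$ and $B$ are minimal. The statement that compactness actually gives is for points and requires agreement on a block: there is $D$ such that $\pi(x)_{[-D,D]}=\pi(x')_{[-D,D]}$ and $x_{[1,D]}=x'_{[1,D]}$ imply $x_0=x'_0$. With that version your right-to-left iteration works and yields $C(X,n)\leq|\mathcal{A}|^{D}\,C(Y,n+2D)$.

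Second, the passage to the constant $M$, which is what the lemma asserts, fails as described. Points realizing different $X$-words over a fixed $Y$-word $w$ have images agreeing only on the window of $w$, so there is no ``common extension $y$''. Keeping the margin $k$ fixed and letting $n\to\infty$ does not help either. The image agreement to the right of the tail window where the words differ stays bounded, so no limit places $M+1$ distinct points in one fiber.

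The missing idea is to send the margin, not $n$, to infinity. Suppose that for every $K$ some $v_K\in\mathcal{L}_{n_K+2K}(Y)$ admits $M+1$ distinct words $x^{K,i}_{[0,n_K-1]}$ with $\pi(x^{K,i})_{[-K,n_K+K-1]}=v_K$. For $K\geq D$, the corrected delay property forces these words to differ pairwise within their last $D$ coordinates. After applying $\sigma^{n_K-1}$, the images agree on $[-K,K]$. Passing to limits, with the differing positions frozen along a subsequence, gives $M+1$ distinct points in one fiber, a contradiction.

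This is in substance the paper's proof. Its Case~1 (the last disagreement lies far from the right end, contradicting left-closing) plays the role of your delay property. Its Case~2 is the fiber argument above, and it depends essentially on the $K$ extra image symbols to the \emph{right} of the word. Your fallback constant $M|\mathcal{A}|^{2D}$ would suffice for Proposition~\ref{prop:complexitybound}, which only needs some constant, but it does not prove the lemma as stated.
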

            
            \begin{proof}
                We first prove the lower bound. For $n\geq m+a+1$, every word $w\in \mathcal{L}_n(X)$ determines, through the local rule defining $\pi$, an output word of length $n-m-a$. Thus, $\pi$ induces a map $\phi_n\colon \mathcal{L}_{n}(X)\to\mathcal{L}_{n-m-a}(Y)$. This map is surjective. Indeed, given $v\in \mathcal{L}_{n-m-a}(Y)$, choose $y\in Y$ so that $y_{[m,n-a-1]}=v$. Since $\pi$ is surjective, there exists $x\in X$ such that $\pi(x)=y$, and then $x_{[0,n-1]}$ is mapped by $\phi_n$ to $v$. Hence
                    \[C(Y,n-m-a)\leq C(X,n).\]
                Setting $k_1=m+a$, we obtain $C(Y,n-k_1)\leq C(X,n)$ for every $n>k_1$.

                We now prove the upper bound. For an integer $K\geq 0$, $n\geq 1$ and $v\in\mathcal{L}_{n+2K}(Y)$ define 
                    \[E_n(v)\coloneq\left\{w\in \mathcal{L}_n(X) \mid \exists x\in X \text{ such that }x_{[0,n-1]}=w,\, \pi(x)_{[-K,n+K-1]}=v\right\}.\]

                We claim that there exists $K\geq 0$ such that $\left|E_n(v)\right|\leq M$ for every $n\geq 1$ and every $v\in\mathcal{L}_{n+2K}(Y)$.  
                
                Suppose, toward a contradiction, that no such $K$ exists. Then, for every $K\geq 0$, there exists an integer $n_K \geq 1$ and $v_K\in\mathcal{L}_{n_K+2K}(Y)$ such that $\left|E_{n_K}(v_K)\right|\geq M+1$. Choose distinct words $w^{K,1}, \ldots, w^{K,M+1}\in E_{n_K}(v_K)$. For each $i\in\{1,\ldots, M+1\}$, choose $x^{K,i}\in X$ satisfying
                    \[x^{K,i}_{[0,n_K-1]}=w^{K,i} \quad \text{and} \quad \pi\left(x^{K,i}\right)_{[-K,n_K+K-1]}=v_K.\]

                Because the words $\{w^{K,i}\}_{i=1}^{M+1}$ are mutually distinct, for every pair $i\neq j$ we may define the last coordinate at which they differ by 
                    \[c_{K,i,j}\coloneq\max\left\{r\in \{0,\ldots, n_K-1\} \mid x^{K,i}_r\neq x^{K,j}_r\right\}.\]  
                Hence
                    \[x^{K,i}_{c_{K,i,j}}\neq x^{K,j}_{c_{K,i,j}}, \qquad x^{K,i}_{[c_{K,i,j}+1,n_K-1]}=x^{K,j}_{[c_{K,i,j}+1,n_K-1]}.\]

                Consider the distances from this last differing coordinate to the right endpoint of the word $d_{K,i,j}\coloneq n_K-1-c_{K,i,j}$. We distinguish two cases.

                \begin{enumerate}[label=\fbox{Case \arabic*}, wide=0pt]
                    \item $\limsup_{K\to\infty} d_{K,i,j}=\infty$ for at least one pair $i\neq j$.
                    
                        Passing to a subsequence, we may assume that $d_{K,i,j}\to\infty$. For this pair, define
                            \[z^{K,i}=\sigma^{c_{K,i,j}}(x^{K,i}),\qquad z^{K,j}=\sigma^{c_{K,i,j}}(x^{K,j}).\]

                        By definition, $z^{K,i}_0\neq z^{K,j}_0$ and $z^{K,i}_{[1,d_{K,i,j}]}=z^{K,j}_{[1,d_{K,i,j}]}$. Furthermore, 
                            \[\pi(x^{K,i})_{[-K,n_K+K-1]}=v_K=\pi(x^{K,j})_{[-K,n_K+K-1]}.\] 
                            
                        Since $\pi$ commutes with the shift, the images $\pi(z^{K,i})$ and $\pi(z^{K,j})$ agree on the interval $[-K-c_{K,i,j},n_K+K-1-c_{K,i,j}]=[-K-c_{K,i,j},K+d_{K,i,j}]$. Since $c_{K,i,j}\geq 0$, the left endpoint is at most $-K$. Moreover, $K+d_{K,i,j}\geq K$. Therefore, $\pi(z^{K,i})_{[-K,K]}=\pi(z^{K,j})_{[-K,K]}$.

                        By compactness of $X\times X$, after passing to a further subsequence we may assume that $z^{K,i}\to z^i$ and $z^{K,j}\to z^{j}$. Since $d_{K,i,j}\to\infty$, the right endpoint of the interval on which $z^{K,i}$ and $z^{K,j}$ agree to the right tends to $\infty$. Hence $z^i_{[1,\infty)}=z^j_{[1,\infty)}$.
                         
                        At the same time, $z^{K,i}_0\neq z^{K,j}_0$ for every $K$. Since the alphabet is finite and convergence in a subshift implies eventual agreement on every fixed coordinate, it follows that $z^i_0\neq z^j_0$. Thus $z^j\neq z^i$.
                        Moreover, the images $\pi\left(z^{K,i}\right)$ and $\pi\left(z^{K,j}\right)$ agree on $[-K,K]$, and these intervals exhaust $\Z$. Therefore, $\pi(z^i)=\pi(z^j)$. 
                        
                        We have obtained two distinct points $z^i$, $z^j$ satisfying $z^j_{[1,\infty)}=z^i_{[1,\infty)}$ and $\pi(z^i)=\pi(z^j)$, contradicting the fact that $\pi$ is left-closing.

                    \item Suppose that $\limsup_{K\to\infty}d_{K,i,j}<\infty$ for every pair $i\neq j$. 
                    
                        Since there are only finitely many pairs $i\neq j$, and each $d_{K,i,j}$ takes values in $\N_0$, we may pass to a subsequence such that, for every pair $i\neq j$, $d_{K,i,j}$ is constant along the subsequence. 
                        
                        Define $y^{K,i}=\sigma^{n_K-1}(x^{K,i})$ for $i\in\{1,\ldots,M+1\}$. By compactness of $X^{M+1}$, after passing to a further subsequence we may assume that $y^{K,i}\to y^i$ for every $i\in\{1,\ldots, M+1\}$.

                        For each $i\neq j$, $x^{K,i}_{n_K-1-d_{K,i,j}}\neq x^{K,j}_{n_K-1-d_{K,i,j}}$, and therefore $y^{K,i}_{-d_{K,i,j}}\neq y^{K,j}_{-d_{K,i,j}}$. Passing to the limit gives $y^i \neq y^j$ whenever $i\neq j$. Thus, $y^1, \ldots, y^{M+1}$ are pairwise distinct.
                        
                        We now consider their images. Since 
                        \[\pi(x^{K,i})_{[-K,n_K+K-1]}=\pi(x^{K,j})_{[-K,n_K+K-1]},\]  
                        after shifting by $n_K-1$, the images $\pi(y^{K,i})$ and $\pi(y^{K,j})$ agree on $[-K-(n_K-1),K]$. Since $n_K\geq 1$, this interval contains $[-K,K]$. Hence, 
                        \[\pi(y^{K,i})_{[-K,K]}=\pi(y^{K,j})_{[-K,K]}.\] 
                        Passing to the limit, we obtain $\pi(y^i)=\pi(y^j)$ for every $i,j$.

                        Thus, we have found $M+1$ pairwise distinct points $y^1, \ldots, y^{M+1}$ that belong to the same fiber of $\pi$. This contradicts the assumption that $\left|\pi^{-1}(y)\right|\leq M$ for every $y\in Y$.
                \end{enumerate}

                Both cases yield a contradiction. Therefore, there exists $K\geq 0$ such that, for every $n\geq 1$ and $v\in\mathcal{L}_{n+2K}(Y)$, we have $\left|E_n(v)\right|\leq M$. 
                
                Fix such a $K$. For every word $w\in\mathcal{L}_n(X)$, choose $x\in X$ such that $x_{[0,n-1]}=w$. Then, setting $v=\pi(x)_{[-K,n+K-1]}$, we have $w\in E_n(v)$. Therefore, 
                    \[ \mathcal{L}_n(X)\subseteq \bigcup_{v\in \mathcal{L}_{n+2K}(Y)}E_n(v).\]
                The sets $E_n(v)$ need not be disjoint, but this inclusion is sufficient to get
                    \[C(X,n)\leq \sum_{v\in\mathcal{L}_{n+2K}(Y)}\left|E_n(v)\right|.\]
                Using the uniform bound $\left|E_n(v)\right|\leq M$, we obtain 
                    \[C(X,n)\leq M\cdot C(Y,n+2K).\]

                Setting $k_2=2K$, we obtain $C(X,n)\leq M\cdot C(Y,n+k_2)$ for every $n\in\N$.

                Finally, let $k=\max\{k_1,k_2\}$. Since the complexity function is nondecreasing,
                    \[C(Y,n-k)\leq C(Y,n-k_1)\leq C(X,n) \leq M\cdot C(Y,n+k_2)\leq M\cdot C(Y,n+k)\]
                for all sufficiently large $n$. This proves the lemma when $\pi$ left-closing. The argument for the right-closing case is analogous.
            \end{proof}

            Observe that, if we consider a map $\pi:X\to Y$ between Smale spaces that is $s$-resolving, then its restriction to a subshift $Z\subseteq X$ is left-closing. Analogously, a $u$-resolving map has a right-closing restriction.

            \begin{proof}[Proof of Proposition \ref{prop:complexitybound}]
            In the original setting we have the map $\pi\colon \Sigma\to X$ between irreducible Smale spaces, which is almost one-to-one. By Theorem 1.1 in \cite{PutnamLift} there exist irreducible Smale spaces $(\tilde{\Sigma},\tilde{\sigma})$ and $(\tilde{X},\tilde{f})$, $u$-resolving factor maps $\alpha\colon \tilde{\Sigma}\to\Sigma$, $\beta\colon \tilde{X}\to X$, and an s-resolving factor map $\tilde{\pi}:\tilde{\Sigma}\to\tilde{X}$ such that the following diagram commutes:

            \begin{center}
                \begin{tikzcd}
                    \tilde{\Sigma} \arrow[rr, "\tilde{\pi}"] \arrow[dd, "\alpha"'] &  & \tilde{X} \arrow[dd, "\beta"]  \\
                                                                                &  &                                   \\
                    \Sigma \arrow[rr, "\pi"]                                       &  & X                              \\                                                           
                \end{tikzcd}
            \end{center}

            Let $Z\in \mathrm{M}(X,f)$ and $Y\in\pi_*^{-1}(Z)\subseteq T(\Sigma,\sigma)$. According to Corollary \ref{cor:symbolic}, the subsystem $Z$ is conjugate to a subshift $Z'$ via a homeomorphism, which we call $h\colon Z\to Z'$. Since $\beta$ is continuous and $u$-resolving, it is finite-to-one (\cite{PutnamMemoir}, Theorem 2.5.3) with a uniform bound $M_\beta$ for the fiber size of $\beta$. The next lemma shows that $\beta^{-1}(Z)\subseteq \tilde{X}$ is totally disconnected and expansive. Since it is a compact metric space, total disconnectedness is equivalent to zero-dimensionality. Therefore, the compact $\tilde{f}$-invariant subset $\tilde{Z}_Y\coloneq\tilde{\pi}\big(\alpha^{-1}(Y)\big)\subseteq\beta^{-1}(Z)$ is conjugate to a subshift $B$ via a homeomorphism $g\colon\tilde{Z}_Y\to B$.

            \begin{lemma}
                Let $\pi\colon Y\to Z$ be a continuous, finite-to-one surjection between compact Hausdorff spaces. If $Z$ is totally disconnected, then so is $Y$.
            \end{lemma}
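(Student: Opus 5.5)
We prove that every connected component of $Y$ is a single point. Fix a connected component $C\subseteq Y$. Since $\pi$ is continuous, $\pi(C)$ is a connected subset of $Z$. Because $Z$ is totally disconnected, $\pi(C)=\{z\}$ for some $z\in Z$. Hence $C\subseteq\pi^{-1}(z)$.

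Since $\pi$ is finite-to-one, $\pi^{-1}(z)$ is a finite set. A finite subset of a Hausdorff space carries the discrete subspace topology: each point is separated from the finitely many others by open sets, and intersecting these gives an open set isolating it. The set $C$ is a connected subset of the discrete finite space $\pi^{-1}(z)$. A discrete space has only singletons and the empty set as connected subsets, and components are nonempty, so $C$ is a single point. Therefore every component of $Y$ is trivial, and $Y$ is totally disconnected.

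The one point requiring care is the word "totally disconnected." Under the component definition above, the argument is complete and compactness is not even needed. If the paper intends the stronger notion relevant to zero-dimensionality, namely that points are separated by clopen sets, we add one step. In a compact Hausdorff space, the quasi-component of a point equals its component, so trivial components imply that any two distinct points are separated by a clopen set. In the compact metric setting this is equivalent to zero-dimensionality, which matches how the lemma is applied to $\beta^{-1}(Z)$ in the proof of Proposition~\ref{prop:complexitybound}. This equivalence is the only nonroutine input, and it is standard (Šura-Bura).

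In the application, $\beta$ restricted to $\beta^{-1}(Z)$ is continuous and finite-to-one (the paper notes $u$-resolving maps are finite-to-one). The set $Z$ is totally disconnected because it is conjugate to a subshift by Corollary~\ref{cor:symbolic}. The lemma then applies directly.
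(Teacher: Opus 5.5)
Your proof is correct and is essentially the paper's argument: the component maps to a single point, so it lies in a finite fiber, and the Hausdorff property forces a finite connected set to be a singleton. Your additional remark that compactness upgrades trivial components to clopen separation (zero-dimensionality) is a harmless supplement; the paper uses the same equivalence implicitly right after the lemma.
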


            \begin{proof}
                Let $y\in Y$ and $C_y$ be the connected component of $y$ in $Y$. Then $\pi(C_y)\ni \pi(y)$ is a connected set of $Z$, which is totally disconnected. Thus, $\pi(C_y)=\{\pi(y)\}$. Also, $|C_y|\leq \left|\pi^{-1}(\pi(C_y))\right|<\infty$, so $C_y$ is finite and contains $y$. Since $Y$ is Hausdorff, it must be $C_y=\{y\}$.  
            \end{proof}

            Similarly, $\alpha^{-1}(Y)$ is totally disconnected, and, consequently, it is conjugate to a subshift $A$ via a homeomorphism $k:\alpha^{-1}(Y)\to A$. The following diagram commutes:
            \begin{center}
                \begin{tikzcd}
                                                                                            & A \arrow[rr] &                                                    & B \arrow[dd] \\
                \alpha^{-1}(Y) \arrow[rr, "\tilde{\pi}"] \arrow[dd, "\alpha"'] \arrow[ru, "k"] &              & \tilde{Z}_Y \arrow[dd, "\beta"'] \arrow[ru, "g"] &              \\
                                                                                            &              &                                                    & Z'           \\
                Y \arrow[rr, "\pi"]                                                            &              & Z \arrow[ru, "h"]                                  &             
                \end{tikzcd}
            \end{center}
            It is easy to check that $\beta\left(\tilde{Z}_Y\right)=Z$, so $g\circ\tilde{\pi}\circ k^{-1}\colon A\to B$ is left-closing and $\alpha\circ k^{-1}\colon A\to Y$, $h\circ\beta\circ g^{-1}\colon B\to Z'$ are both right-closing factor maps. These maps are also finite-to-one, with fiber sizes at most $M_{\tilde{\pi}}$, $M_\alpha$ and $M_\beta$ respectively. By the Curtis-Hedlund-Lyndon theorem, they are sliding block codes as well, since they are continuous and shift-commuting.
            By Lemma \ref{lem:complexityboundsubshift}, there exist $k_1, k_2, k_3\in \Z$ such that for sufficiently large $n$
                \begin{align*}
                    C(Z',n-k_1)\leq &C(B,n)\leq M_\beta\cdot C(Z',n+k_1),\\
                    C(B,n-k_2)\leq & C(A,n)\leq M_{\tilde{\pi}}\cdot C(B,n+k_2),\\
                    C(Y,n-k_3)\leq &C(A,n)\leq M_\alpha\cdot C(Y,n+k_3).
                \end{align*} 
            Setting $k=k_1+k_2+k_3$, we combine the inequalities above to obtain
                \begin{align*}
                    C(Z',n-k)\leq C(B,n-k_2-k_3) \leq C(A,n-k_3)\leq M_\alpha\cdot C(Y,n),\\
                    C(Y,n)\leq C(A,n+k_3)\leq M_{\tilde{\pi}}\cdot C(B,n+k_3+k_2) \leq M_{\tilde{\pi}}M_\beta \cdot C(Z',n+k)
                \end{align*}
            for every $n$ sufficiently large. In conclusion,
                \[M_\alpha^{-1}\cdot C(Z',n-k)\leq C(Y,n)\leq M_{\tilde{\pi}}M_\beta\cdot C(Z',n+k) \quad \forall n\geq k.\]
            Defining $M=\max\{M_\alpha, M_{\tilde{\pi}}M_\beta\}$, we obtain the inequality in the proposition. Note that the argument holds for every subshift $Z'$ in the conjugacy class of $Z$. The proposition is proved. 
            \end{proof}

            Before stating the main result of the subsection, we provide the definition of topological slow entropy. For a more extensive treatment of this topic, see \cite{SlowEntropy}.

            \begin{definition} 
                A scale $\{a_\chi\}$ is a family of increasing functions $a_\chi\colon\N\to (0,\infty)$ indexed by $\chi\in\mathbb{R}_{\geq 0}$, such that if $\chi<\chi'$ then $a_{\chi}=o(a_{\chi'})$.
            \end{definition}

            \begin{definition} 
                Let $(X,f)$ be a topological dynamical system. Let $S_Z(n,\eps)$ denote the maximal cardinality of an $(n,\eps)$-separated set of a compact $f$-invariant subset $Z\subseteq X$. The slow topological entropy of $(Z,f)$ for the scale $a_\chi$ is
                \[h_{\mathrm{top},a_\chi}(f,Z)=\lim_{\eps\to 0^+}\sup\left\{\chi\geq 0 \mid \limsup_{n\to\infty}\dfrac{S_Z(n,\eps)}{a_\chi(n)}>0\right\}.\]
            \end{definition}

            In the definition of $h_{\mathrm{top},a_\chi}(f,Z)$ we can omit the limit as $\eps\to 0^+$ whenever the system is expansive. In \cite{Pujals}, the authors define a generalized entropy that extends the definition of slow entropy and prove that there exists $\eps\in (0,c_f)$, where $c_f$ is the expansivity constant of $f$, such that 
                \[h_{\mathrm{top},a_\chi}(f,Z)=\sup\left\{\chi\geq 0 \mid \limsup_{n\to\infty}\dfrac{S_Z(n,\eps)}{a_{\chi}(n)}>0\right\}.\]
            In the same article, it is proved that their generalized entropy is invariant under topological conjugacy.
            
            Observe that, in the symbolic case covered in \cite{PavlovSchmieding}, linear growth estimates for the complexity are provided. However, the family of linear functions $\{\chi n\}$ is not a scale, since they do not satisfy $a_{\chi}=o(a_{\chi'})$ for $\chi<\chi'$. In this case, we state the result in terms of linear complexity. 

            \begin{definition}
                Let $C(Y,n)$ denote the symbolic complexity of a subshift $(Y,\sigma)$ over a finite alphabet. We say that $Y$ has linear complexity if 
                    \[\limsup_{n\to\infty}\dfrac{C(Y,n)}{n}<\infty.\]
            \end{definition}

            We need one more auxiliary result before proving the main theorem of the section.
            
            \begin{proposition}\label{prop:coplexitysubsequence}
                Let $(X,f)$ be an irreducible Smale space. There exists a dense subset of $\overline{T'(X,f)}$ consisting of minimal subsystems $Z$ such that, for every subshift $Z'$ conjugate to $Z$,
                \begin{enumerate}[wide=0pt, font=\upshape]
                    \item for every $\gamma\geq 1$, 
                        \[\dfrac{\log(C(Z',n_j))}{\log n_j}\to \gamma\]
                    along some subsequence $n_j\to\infty$;
                    \item for every $\gamma\in (0,1)$
                        \[\dfrac{\log\log C(Z',m_j)}{\log m_j}\to \gamma\]
                    along some subsequence $m_j\to\infty$.
                \end{enumerate}
                Moreover, 
                    \[\limsup_{n\to\infty}\dfrac{C(Z',n)}{n}=\infty.\]
            \end{proposition}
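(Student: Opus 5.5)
The plan is to transfer the symbolic statement (Proposition 5.23 of \cite{PavlovSchmieding}) through $\pi_*$, using Proposition \ref{prop:complexitybound} to compare complexities. First I would take the residual set $\mathcal{R}\subseteq\overline{T'(\Sigma,\sigma)}$ of subshifts that are minimal (generic by \cite{PavlovSchmieding}) and also satisfy the three bullets of Proposition 5.23. Since $\pi_*\colon\overline{T'(\Sigma,\sigma)}\to\overline{T'(X,f)}$ is continuous and surjective (Lemma \ref{lem:transitivesurj}) and $\mathcal{R}$ is dense, the set $\pi_*(\mathcal{R})$ is dense in $\overline{T'(X,f)}$. For $Y\in\mathcal{R}$, the image $Z=\pi(Y)$ is minimal, because it is a factor of a minimal system. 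Hence $Z\in\mathrm{M}(X,f)$ and $Y\in\pi_*^{-1}(Z)$. Proposition \ref{prop:complexitybound} then gives, for every subshift $Z'$ conjugate to $Z$, constants $M,k$ with
\[M^{-1}C(Z',n-k)\le C(Y,n)\le M\,C(Z',n+k)\]
for all $n$ large. Equivalently, $M^{-1}C(Y,n-k)\le C(Z',n)\le M\,C(Y,n+k)$.

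Next I would transfer each asymptotic statement along the subsequences supplied for $Y$. For the polynomial scale, fix $\gamma\ge 1$ and a sequence $n_j$ with $\log C(Y,n_j)/\log n_j\to\gamma$. Setting $N_j=n_j+k$, we get
\[\log C(Z',N_j)\ge \log C(Y,n_j)-\log M,\]
and $\log N_j/\log n_j\to1$, so $\liminf_j \log C(Z',N_j)/\log N_j\ge\gamma$. For the matching upper bound, set $N_j'=n_j-k$, which gives $\limsup_j \log C(Z',N_j')/\log N_j'\le\gamma$.

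These one-sided estimates along two shifted subsequences do not by themselves produce a single subsequence with the exact limit $\gamma$. This is the main obstacle. I would handle it with the following device: the set of limit points of $\log C(Z',n)/\log n$ is closed, and consecutive terms differ by $o(1)$, since $C(Z',n+1)\le |\mathcal{A}|\,C(Z',n)$. So the set of limit points is a closed interval. The argument above shows that for every $\gamma\ge1$ there are limit points both $\ge\gamma$ and $\le\gamma$. Therefore every $\gamma\ge1$ is attained as a limit along some $n_j$. I would confirm that such limit points exist by using the $\gamma$-values for $Y$ near $\gamma$.

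For the stretched-exponential scale I would run the same argument with $\log\log$. The additive shift by $k$ and the multiplicative constant $M$ disappear after taking $\log\log$ and dividing by $\log m$. The increments are again $o(1)$, because $\log C(Z',n+1)\le\log C(Z',n)+\log|\mathcal{A}|$ and $\log C(Z',n)\to\infty$ (which holds since $Z'$ is infinite). The intermediate value argument then gives every $\gamma\in(0,1)$.

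Finally, the $\limsup C(Z',n)/n=\infty$ claim follows from the first bullet with $\gamma>1$: along that subsequence $C(Z',n_j)\ge n_j^{\gamma-o(1)}$, so $C(Z',n_j)/n_j\to\infty$.
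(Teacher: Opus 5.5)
Your proof is correct. The setup is the same as the paper's: the residual set of minimal subshifts satisfying Proposition 5.23 of \cite{PavlovSchmieding}, density of its image under $\pi_*$, and the two-sided comparison from Proposition \ref{prop:complexitybound}.

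The routes differ in how you recover an exact limit despite the shifts $\pm k$. The paper uses submultiplicativity, $C(Z',n+2k)\le C(Z',n)\,C(Z',2k)$. This keeps $\bigl|\log C(Z',n_j-k)-\log C(Y,n_j)\bigr|$ bounded along the single subsequence $n_j-k$, so the limit $\gamma$ transfers directly, and likewise after taking $\log\log$. In the paper, the ``main obstacle'' you identify therefore disappears in one line.

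You instead observe that $a_n=\log C(Z',n)/\log n$ (resp.\ $\log\log C(Z',n)/\log n$) has increments tending to $0$. Hence its set of limit points is the whole interval $[\liminf a_n,\limsup a_n]$, and you only need limit points on each side of $\gamma$. This is valid and more robust: it would tolerate much cruder comparisons than a bounded multiplicative constant. It also shows that every intermediate growth rate is realized. The cost is verifying $a_{n+1}-a_n\to 0$. Besides the numerator increment (at most $\log|\mathcal{A}|$), you must also control the change of denominator from $\log n$ to $\log(n+1)$. That follows from $\log C(Z',n)\le n\log|\mathcal{A}|$, and for the $\log\log$ scale you additionally use $\log C(Z',n)\to\infty$.

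For the final claim, you deduce $\limsup_n C(Z',n)/n=\infty$ from item (1) with $\gamma>1$. The paper instead invokes the linear-scale bullet of Proposition 5.23 to get $\limsup\ge\gamma/M$ for every $\gamma$. Your deduction is shorter and does not need that bullet.
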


            \begin{proof}
                Recall Proposition 5.23 in \cite{PavlovSchmieding}. Define $\mathcal{D}$ as the intersection of $\mathrm{M}(\Sigma,\sigma)$ and the residual set in $\overline{T'(\Sigma,\sigma)}$ where the proposition holds. Then $\mathcal{D}$ is also residual in $\overline{T'(\Sigma,\sigma)}$, and so $\pi_*(\mathcal{D})$ is dense in $\overline{T'(X,f)}$ by Lemma \ref{lem:transitivesurj}. 

                Let $Z\in\pi_*(\mathcal{D})$, which is minimal and expansive. Let $Z'$ be a subshift conjugate to $Z$, which we know exists by Corollary \ref{cor:symbolic}. Then, by Proposition \ref{prop:complexitybound}, there are $Y\in \pi_*^{-1}(Z)\cap \mathcal{D}$, $M>0$ and $k(Z,Z',Y,\pi)\in \N$ such that
                    \begin{equation}\label{eq:complexitybound}
                        M^{-1}\cdot C(Z',n-k)\leq C(Y,n)\leq M\cdot C(Z',n+k)
                    \end{equation}
                for all sufficiently large $n$. 

                To establish (1), let $n_j\to\infty$ be the subsequence such that $\dfrac{\log C(Y,n_j)}{\log n_j}\to\gamma$. Using inequality (\ref{eq:complexitybound}) and submultiplicativity of the complexity function,
                    \begin{align*} 
                        \dfrac{\log C(Z',n_j-k)-\log M}{\log (n_j-k)}&\leq \dfrac{\log C(Y,n_j)}{\log (n_j-k)} \\&\leq \dfrac{\log C(Z',n_j-k)+\log C(Z',2k)+\log M}{\log (n_j-k)}.
                    \end{align*}
                Note that $\lim_{j\to\infty}\tfrac{\log (n_j+c)}{\log n_j}=1$ for any constant $c\in\mathbb{R}$. Hence
                    \[\limsup_{j\to\infty}\dfrac{\log C(Z',n_j-k)}{\log (n_j-k)}\leq \lim_{j\to\infty} \dfrac{\log C(Y,n_j)}{\log n_j}\cdot \dfrac{\log n_j}{\log(n_j-k)}=\gamma.\]

                Similarly, since $C(Z',2k)$ is a constant,
                    \[\liminf_{j\to\infty}\dfrac{\log C(Z',n_j-k)}{\log(n_j-k)}\geq \lim_{j\to\infty}\dfrac{\log C(Y,n_j)}{\log n_j}\cdot \dfrac{\log n_j}{\log(n_j-k)}=\gamma.\]
                Thus, convergence holds along the subsequence $n_j-k\to\infty$.
                
                The proof of (2) is analogous. Let $m_j$ be the sequence such that $\frac{\log\log C(Y,m_j)}{\log m_j}\to \gamma$. Submultiplicativity of $C(Y,n)$ yields
                    \[C(Y,k)^{-1}C(Y,m_j)\leq C(Y,m_j-k)\leq C(Y,m_j+k)\leq C(Y,k)C(Y,m_j.)\]
                Hence, setting $B_j\coloneq \log C(Y,m_j)$, we have
                    \[B_j-\log C(Y,k) \leq \log C(Y,m_j-k)\leq \log C(Y,m_j+k) \leq B_j+\log C(Y,k).\]
                Therefore, using inequality (\ref{eq:complexitybound}),
                    \[B_j-\log C(Y,k)-\log M\leq \log C(Z',m_j)\leq B_j +\log C(Y,k)+\log M.\]
                Define $K\coloneq \log C(Y,k)+\log M$. Since $\gamma>0$, $\frac{\log B_j}{\log m_j}\to \gamma$ implies that $B_j\to\infty$. Hence, for large $j$
                    \[\log(B_j-K)\leq \log\log C(Z',m_j)\leq \log (B_j+K).\]
                Now $\log(B_j-K)-\log B_j\to 0$, and similarly $\log B_j-\log(B_j+K)\to 0$. Thus,
                    \[\dfrac{\log(B_j-K)}{\log m_j}\to \gamma, \qquad \dfrac{\log (B_j+K)}{\log m_j}\to\gamma.\]
                The squeeze theorem gives the desired convergence . Thus, convergence holds along the subsequence $m_j\to\infty$.

                For the linear growth rate statement, fix $\gamma\geq 1$ and choose a corresponding subsequence $n_j\to\infty$ such that $\frac{C(Y,n_j)}{n_j}\to\gamma$. We use inequality (\ref{eq:complexitybound}) again, so
                    \[\dfrac{C(Z',n_j+k)}{n_j+k}\geq \dfrac{1}{M}\cdot \dfrac{C(Y,n_j)}{n_j}\cdot \dfrac{n_j}{n_j+k}.\]
                Therefore,
                    \[\limsup_{n\to\infty}\dfrac{C(Z',n)}{n}\geq\limsup_{j\to\infty}\frac{C(Z',n_j)}{n_j}\geq \dfrac{\gamma}{M}.\]
                Since $\gamma\geq 1$ is arbitrary, the statement follows.
            \end{proof}

            Observe that the multiplicative constant $M$ in inequality (\ref{eq:complexitybound}) prevents us from extracting a subsequence $n_j$ so that $C(Z',n_j+k)$ has linear growth $\gamma n_j$. 

            We can finally state the main result of the section.

            \begin{theorem}
                Let $(X,f)$ be an irreducible Smale space. Then there is a residual set $\mathcal{G}\subseteq \overline{T'(X,f)}$ such that every $Z\in\mathcal{G}$ satisfies 
                    \[h_{\mathrm{top},n^\chi}(f,Z)=\infty\]
                for the polynomial scale $a_\chi(n)=n^\chi$, and
                    \[h_{\mathrm{top},e^{n^\chi}}(f,Z)=1\]
                for the stretched-exponential scale $a_\chi(n)=e^{n^\chi}$.

                Moreover, for every sufficiently small $\eps>0$, 
                    \[\limsup_{n\to\infty}\dfrac{S_Z(n,\eps)}{n}=\infty.\]
            \end{theorem}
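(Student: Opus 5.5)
The plan is to write each of the three asymptotic properties as a countable intersection of open subsets of $\overline{T'(X,f)}$. Density of each open set will come from the dense family in Proposition \ref{prop:coplexitysubsequence}. I will then intersect with the residual set of minimal subsystems from Theorem \ref{thm:MinimalgenericinT}.

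\emph{Fixing the scale $\eps$.} I fix $\eps\in(0,c_f/2)$, where $c_f$ is the expansivity constant of $X$. Every $Z\in S(X,f)$ is expansive with constant $c_f$, so I want to use this single $\eps$ to compute slow entropy for all $Z$ simultaneously, via the result of \cite{Pujals}. This is the point needing most care: the cited statement produces some $\eps$ for each system. I would check that its proof only uses that $\eps$ is small relative to the expansivity constant, which here is uniform in $Z$. If that fails, the fallback is the standard argument that, for expansive systems, $S_Z(n,\eps)$ and $S_Z(n,\eps')$ differ only by a bounded shift in $n$ when $\eps,\eps'<c_f$. Such a shift is harmless at all three scales considered.

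\emph{The $G_\delta$ sets and their density.} The key observation is that, for fixed $n$ and $N$, the set
\[
\{Z\in S(X,f)\mid Z \text{ contains an } (n,\eps)\text{-separated set of cardinality } N\}
\]
is open in $d_{\mathcal H}$. Indeed, separation means the strict inequality $d_n>\eps$, and $f^0,\dots,f^{n-1}$ are uniformly continuous, so slightly moving each point into a Hausdorff-close $Z'$ preserves separation. Hence, for any increasing $a\colon\N\to(0,\infty)$, the set
\[
\mathcal U(a)\coloneq\bigcap_{m\in\N}\bigcup_{n\geq m}\{Z \mid S_Z(n,\eps)>a(n)\}
\]
is $G_\delta$. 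I will take
\[
\mathcal G=\mathrm{M}(X,f)\cap\bigcap_{\chi,k\in\N}\mathcal U(kn^\chi)\cap\bigcap_{q\in\mathbb Q\cap(0,1)}\mathcal U\bigl(e^{n^q}\bigr)\cap\bigcap_{k\in\N}\mathcal U(kn),
\]
intersected with $\overline{T'(X,f)}$. For density, take $Z$ in the dense family of Proposition \ref{prop:coplexitysubsequence}, and let $Z'$ be a subshift conjugate to $Z$ via $h$. Uniform continuity of $h$ and $h^{-1}$ gives the standard comparison
\[
C(Z',n-c)\leq S_Z(n,\eps)\leq C(Z',n+c)
\]
for a constant $c$: distinct words of $Z'$ of length $n-c$ give $(n,\eps)$-separated points, and conversely. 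Choosing $\gamma>\chi+1$ in part (1) of Proposition \ref{prop:coplexitysubsequence}, choosing $\gamma\in(q,1)$ in part (2), and using the last assertion there, $Z$ lies in every one of these $G_\delta$ sets. Each is therefore a dense $G_\delta$ in $\overline{T'(X,f)}$. By Baire, $\mathcal G$ is residual.

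\emph{Reading off the conclusions.} For $Z\in\mathcal G$, membership in all $\mathcal U(kn^\chi)$ gives $\limsup_n S_Z(n,\eps)/n^\chi=\infty>0$ for every $\chi$, so $h_{\mathrm{top},n^\chi}(f,Z)=\infty$. Membership in all $\mathcal U(kn)$ gives $\limsup_n S_Z(n,\eps)/n=\infty$. Membership in $\mathcal U(e^{n^q})$ for all rational $q<1$ gives $h_{\mathrm{top},e^{n^\chi}}(f,Z)\geq1$. The upper bound $\leq 1$ holds for every subsystem: $S_Z(n,\eps)\leq C(Z',n+c)\leq |\mathcal A|^{n+c}$, so $S_Z(n,\eps)/e^{n^\chi}\to0$ for all $\chi>1$. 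The remaining obstacle is the uniform-$\eps$ issue discussed above, together with checking that the words-versus-separated-sets comparison has constants independent of $n$.
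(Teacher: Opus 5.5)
Your proposal is correct and has the same skeleton as the paper's proof: the open sets $\{Z : S_Z(n,\eps)\geq q\}$, countable intersections encoding $\limsup_n S_Z(n,\eps)/a(n)=\infty$ at countably many exponents, density from the family of Proposition~\ref{prop:coplexitysubsequence}, and Baire.

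The genuine difference is how you transfer the complexity of the symbolic representative $Z'$ to separated sets in $Z$. The paper proceeds in four steps.
\begin{enumerate}
\item It fixes $\eps$ by applying the Correa--Pujals expansive-scale result (Proposition B.4 of \cite{Pujals}) to $(X,f)$, and asserts that this single $\eps$ computes slow entropy for every subsystem.
\item It deduces $h_{\mathrm{top},n^\chi}(\sigma,Z')=\infty$ from the oscillating complexity.
\item It invokes conjugacy invariance of slow entropy.
\item It returns to the fixed scale via an auxiliary exponent $\gamma>\chi$.
\end{enumerate}
You replace all of this with the elementary comparison $C(Z',n-c)\leq S_Z(n,\eps)\leq C(Z',n+c)$. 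Its upper half uses uniform continuity of $h^{-1}$. Its lower half uses uniform continuity of $h$ together with the standard expansivity lemma: for $\eps<c_f$ and $\delta>0$ there is $N$ such that $d(f^ix,f^iy)\leq\eps$ for all $|i|\leq N$ forces $d(x,y)<\delta$. The constant $c$ depends on $Z$ and $h$ but not on $n$, which is all the density step needs.

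Your handling of the upper bound $h_{\mathrm{top},e^{n^\chi}}\leq 1$ is also cleaner than the paper's intersection with $\mathrm{ZE}(X,f)$: zero entropy is not needed. Any exponential bound suffices, for instance $\limsup_n\frac{1}{n}\log S_Z(n,\eps)\leq h(f,X)<\infty$, which holds at every scale. Used this way, it even makes your intersection with $\mathrm{M}(X,f)$ superfluous, since you no longer need a symbolic representative for the upper bound.

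Finally, the uniform-$\eps$ issue you single out as the main obstacle does not actually arise on your route. Since $S_Z(n,\eps)$ is nonincreasing in $\eps$, the quantity $\sup\{\chi : \limsup_n S_Z(n,\eps)/a_\chi(n)>0\}$ is nonincreasing in $\eps$. Hence a lower bound at your fixed $\eps_0$ is automatically a lower bound for the $\eps\to0^+$ limit in the definition, and the upper bound holds at every scale anyway. The same monotonicity gives the ``for every sufficiently small $\eps$'' clause of the last assertion from the single scale $\eps_0$; you should state this explicitly. Expansivity is needed only in the density check, separately for each $Z$. So your argument is self-contained, whereas the paper's relies on an external fixed-scale theorem applied uniformly to all subsystems.
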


            \begin{proof}
                Fix $\eps>0$ sufficiently small so that the Correa-Pujals expansive-scale result (Proposition B.4 in \cite{Pujals}) applies to $(X,f)$. Then every subsystem $Z\subseteq X$ satisfies
                    \[h_{\mathrm{top},a_\chi}(f,Z)=\sup\left\{ \chi \geq 0 \mid \limsup_{n\to\infty} \dfrac{S_Z(n,\eps)}{a_\chi(n)}>0\right\}.\]
                Let $d_n(x,y)\coloneq\max_{0\leq j<n}d(f^j(x),f^j(y))$ denote the Bowen metric. We say that a set $E$ is $(n,\eps)$-separated if $d_n(x,y)>\eps$ for every pair of distinct $x,y\in E$. Recall that $S_Z(n,\eps)\coloneq\max\{|E| \mid E\subseteq Z \text{ is } (n,\eps)-\mathrm{separated}\}$. 
                
                For $n, q\in\N$ define
                    \begin{align*} 
                        U(n,q)&\coloneq \{Z\in \overline{T'(X,f)} \mid S_Z(n,\eps)\geq q\}\\
                        &   =\{Z\in\overline{T'(X,f)} \mid \exists x_1, \ldots, x_q\in Z \text{ so that }d_n(x_i,x_j)>\eps \text{ for all }i\neq j\}.
                    \end{align*}
                Equivalently, $Z\in U(n,q)$ if there are $x_1, \ldots, x_q\in Z$ with $d_n(x_i,x_j)>\eps$ whenever $i\neq j$. These sets are open: if $x_1,\ldots, x_q$ are witnesses of the latter property, then there exists $\eta>0$ so that
                    \[0<2\eta<\min_{i\neq j}\{d_n(x_i,x_j)-\eps\}.\]
                Since the map $(x,y)\mapsto d_n(x,y)$ is continuous on the compact space $X\times X$, there exists $\delta>0$ such that $d(x,x')<\delta \Longrightarrow d_n(x,x')<\eta$.

                Now suppose that $Z'\in S(X,f)$ satisfies $d_{\mathcal{H}}(Z,Z')<\delta$. For each $i$, choose $x_i'\in Z'$ such that $d(x_i,x_i')<\delta$. Then $d_n(x_i,x_i')<\eta$.
                Hence, for $i\neq j$, the triangle inequality for $d_n$ gives
                    \begin{align*}
                        d_n(x_i',x_j')&\geq d_n(x_i,x_j)-d_n(x_i,x_i')-d_n(x_j,x_j')\\
                                      &> (\eps+2\eta)-\eta-\eta=\eps.
                    \end{align*}
                Thus, $x_1',\ldots, x_q'\in Z'$ form an $(n,\eps)$-separated subset of $Z'$, and therefore $Z'\in U(n,q)$. 
                Hence $U(n,q)$ is open in the Hausdorff metric.

                We now prove that $h_{\mathrm{top},n^\chi}(f,Z)=\infty$ holds on a dense $G_\delta$ set of $\overline{T'(X,f)}$. 

                For any positive sequence $a(n)$, define
                    \[\mathcal{G}(a)=\bigcap_{q\in\N}\bigcap_{N\in\N}\bigcup_{n\geq N}U(n,\lfloor qa(n)\rfloor +1),\]
                which is a $G_\delta$ set since $U(n,q)$ is open for every $n,q\in\N$.
                Observe that 
                    \[Z\in \mathcal{G}(a)\Longleftrightarrow \limsup_{n\to\infty}\dfrac{S_Z(n,\eps)}{a(n)}=\infty.\]

                For the statement about the polynomial scale $a_{\chi}(n)=n^\chi$, we characterize $h_{\mathrm{top},n^\chi}(f,Z)=\infty$ by 
                    \[\mathcal{G}_{\text{poly}}=\bigcap_{\chi\in\mathbb{Q}_{>0}}\mathcal{G}(n^\chi)\]
                which is a $G_\delta$ set. 
                Similarly, we characterize $h_{\mathrm{top},e^{n^\chi}}(f,Z)\geq 1$ by 
                    \[\mathcal{G}_{\text{str}}^-\coloneq\bigcap_{\chi\in\mathbb{Q}\cap(0,1)}\mathcal{G}\left(e^{n^\chi}\right)\]
                and $h_{\mathrm{top},e^{n^\chi}}(f,Z)=1$ by $\mathcal{G}_{\text{str}}\coloneq\mathcal{G}_{\text{str}}^-\cap \text{ZE}(X,f)$, which are also $G_\delta$ sets.
                
                For the density, let $\mathcal{R}\coloneq\pi_*(\mathcal{D})$, where $D$ is the set defined in the proof of Proposition \ref{prop:coplexitysubsequence}. We prove directly that $\mathcal{R}\subseteq \mathcal{G}(n^\chi)$ for every $\chi>0$ and $\mathcal{R}\subseteq \mathcal{G}(e^{n^\chi})$ for every $0<\chi<1$. 

                Fix $Z\in\mathcal{R}$ and $\chi>0$. By Proposition \ref{prop:coplexitysubsequence}, $Z$ has a symbolic representative $Z'$ with oscillating polynomial complexity. 
                Hence $h_{\mathrm{top},n^\chi}(\sigma,Z')=\infty$, and by conjugacy invariance of the slow entropy
                    \[h_{\mathrm{top},n^\chi}(f,Z)=\infty.\]
                Since slow entropy for $f\big|_{Z}$ is computed at the fixed scale $\eps$, for every $\gamma>\chi$, 
                    \[\limsup_{n\to\infty}\dfrac{S_Z(n,\eps)}{n^\gamma}>0.\]
                Fix such a $\gamma$. There are $c>0$ and $n_j\to\infty$ such that $S_Z(n_j,\eps)\geq cn_j^\gamma$ for every $j$. Therefore 
                    \[\dfrac{S_Z(n_j,\eps)}{n_j^\chi}\geq cn_j^{\gamma-\chi}\to\infty.\]
                Thus    
                    \[\limsup_{n\to\infty}\dfrac{S_Z(n,\eps)}{n^\chi}=\infty,\]
                so $Z\in \mathcal{G}(n^\chi)$. 

                Since $\mathcal{R}$ is dense in $\overline{T'(X,f)}$, $\mathcal{G}(n^\chi)$ is dense for every $\chi>0$. By the Baire Category Theorem, $G_{\text{poly}}$ is residual in $\overline{T'(X,f)}$. 

                For the stretched-exponential growth, fix $Z\in\mathcal{R}$ and $0<\chi<1$. The stretched-exponential oscillations of the symbolic representative $Z'$ give $h_{\mathrm{top},e^{n^\chi}}(\sigma,Z')\geq 1$, and hence, by conjugacy invariance, $h_{\mathrm{top},e^{n^\chi}}(f,Z)\geq 1$. Choose $\gamma\in (\chi,1)$. Again, using that $\eps$ computes slow entropy,
                    \[\limsup_{n\to\infty}\dfrac{S_Z(n,\eps)}{e^{n^\gamma}}>0.\]
                Thus, there are $c>0$ and $n_j\to\infty$ such that $S_Z(n_j,\eps)\geq ce^{n_j^\gamma}$. Consequently,
                    \[\dfrac{S_Z(n_j,\eps)}{e^{n_j^\chi}}\geq c \exp(n_j^\gamma-n_j^\chi)\to\infty,\]
                because $\gamma>\chi$. Therefore, 
                    \[\limsup_{n\to\infty}\dfrac{S_Z(n,\eps)}{e^{n^\chi}}=\infty,\]
                so $Z\in \mathcal{G}\left(e^{n^\chi}\right)$. We conclude, as in the polynomial-scale case, that every $\mathcal{G}\left(e^{n^\chi}\right)$ is dense. The Baire Category Theorem allows us to conclude that $\mathcal{G}_{\text{str}}^-$ is residual in $\overline{T'(X,f)}$.
                
                The desired properties hold in the residual set 
                    \[\mathcal{G}_{\text{poly}}\cap \mathcal{G}_{\text{str}}^-\cap \text{ZE}(X,f).\]

                The statement about $\limsup_{n\to\infty}\dfrac{S_Z(n,\eps)}{n}=\infty$ follows immediately from the infinite topological slow entropy at the polynomial scale.
            \end{proof}
                
    \section{The space of infinite totally transitive systems \texorpdfstring{$TT'(X,f)$}{TT'(X,f)}}\label{sec:totallytransitive}
        We now focus our study on the more restrictive locus of totally transitive subsystems of $(X,f)$. Such a subsystem is finite if and only if it is a singleton. Thus, we consider the set of infinite totally transitive subsystems of $S(X,f)$
            \[TT'(X,f)\coloneq \{Y\in S(X,f) \mid |Y|=\infty,\, Y \text{ is totally transitive}\}.\]

        It is worth noting that if the Smale space $(X,f)$ is transitive and not mixing, its nontrivial spectral decomposition with $p>1$ implies that $TT'(X,f)=\emptyset$. Therefore, we restrict this section to the case when $(X,f)$ is topologically mixing. In this setting, any SFT $(\Sigma,\sigma)$ that extends $(X,f)$ via $\pi\colon \Sigma\to X$ arising from a Markov partition is topologically mixing as well (Theorem 4.3.5 in \cite{AokiHiraide}).

        In analogy with the case of transitive subsystems, a point $y\in Y$ which is a transitive point for $f^{n}$ for every $n$ is called a totally transitive point. Every totally transitive system has a dense $G_\delta$ set of totally transitive points. 
        This fact, together with an argument identical to the one presented in Lemma \ref{lem:transitivesurj} implies the following result.

        \begin{lemma}
            Let $(X,f)$ be a mixing Smale space. The image of $\overline{TT'(\Sigma,\sigma)}$ under the induced map $\pi_*$ is $\overline{TT'(X,f)}$.
        \end{lemma}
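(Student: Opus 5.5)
We mirror the proof of Lemma \ref{lem:transitivesurj}, with ``transitive'' replaced by ``totally transitive'' throughout. We first prove $\pi_*(TT'(\Sigma,\sigma))=TT'(X,f)$, and then pass to closures using the continuity of $\pi_*$ (Proposition \ref{prop:contsurjpi}) and the compactness of $\overline{TT'(\Sigma,\sigma)}$.

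\emph{Forward inclusion.} Let $Z\in TT'(\Sigma,\sigma)$ and let $z\in Z$ be a totally transitive point. Fix $n\in\N$; then $z$ is a transitive point of $(Z,\sigma^n)$. Since $\pi\circ\sigma^n=f^n\circ\pi$, the argument of Lemma \ref{lem:transitivesurj} applied to the pair $(\sigma^n,f^n)$ shows that $\pi(z)$ has dense $f^n$-orbit in $\pi(Z)$. As $n$ is arbitrary, $\pi(z)$ is a totally transitive point. Moreover, $\pi(Z)$ is infinite because $\pi$ is finite-to-one. Hence $\pi_*(Z)\in TT'(X,f)$.

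\emph{Reverse inclusion.} Let $Y\in TT'(X,f)$. By the stated fact, totally transitive points of $Y$ form a dense $G_\delta$ set, so we may pick one, say $y$, and choose $z\in\pi^{-1}(y)$. Set $Z=\overline{\{\sigma^m(z)\mid m\in\N\}}$; exactly as before, $\pi(Z)=Y$ and $Z$ is infinite.

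The main obstacle is showing that $Z$ is totally transitive. The point $z$ need not be totally transitive in $Z$: the orbit of $z$ under $\sigma^n$ may close up to a proper subset $Z_n$ of $Z$, with $Z=\bigcup_{i<n}\sigma^i(Z_n)$. The plan is as follows.
\begin{itemize}
\item Each $\overline{O^+_{\sigma^n}(\sigma^i z)}$ maps onto $Y$, since $y$ is $f^n$-transitive. So every piece in the decomposition of $Z$ projects onto all of $Y$.
\item If these pieces are distinct, then over a point of $Y$ with a single $\pi$-preimage the pieces must share that preimage. Since $\pi$ is almost one-to-one, such points are dense in $X$; we would need them to meet $Y$, or else use finiteness of fibers to produce a contradiction.
\item If this does not work directly, change the approach: do not insist that $Z$ be an orbit closure of a preimage of $y$. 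Instead, note that $\pi^{-1}(Y)$ has finitely many ``components'' in the spectral sense. Among the minimal closed $\sigma$-invariant sets projecting onto $Y$ (which exist by Zorn's lemma and compactness), choose one, $Z_0$. For each $n$, $Z_0$ is a finite union of $\sigma^n$-orbit closures, each of which surjects onto $Y$. By minimality, each such orbit closure is $\sigma$-invariant only if it equals $Z_0$, which forces $(Z_0,\sigma^n)$ to be transitive.
\end{itemize}

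\emph{Closures.} Once $\pi_*(TT'(\Sigma,\sigma))=TT'(X,f)$ is established, continuity gives $\pi_*\bigl(\overline{TT'(\Sigma,\sigma)}\bigr)\subseteq\overline{TT'(X,f)}$. Conversely, $\pi_*\bigl(\overline{TT'(\Sigma,\sigma)}\bigr)$ is compact, hence closed, and contains $TT'(X,f)$, so it contains $\overline{TT'(X,f)}$. This gives equality, as in Lemma \ref{lem:transitivesurj}.
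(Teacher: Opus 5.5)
\textbf{Verdict: there is a genuine gap in the reverse inclusion.} Your forward inclusion and your passage to closures are fine. They match the paper, whose entire proof is ``follow Lemma~\ref{lem:transitivesurj} to get $\pi_*(TT'(\Sigma,\sigma))=TT'(X,f)$, then use continuity.'' You are right that this analogy does not by itself give $TT'(X,f)\subseteq\pi_*(TT'(\Sigma,\sigma))$: a preimage $z$ of a totally transitive point need not be $\sigma^n$-transitive in $Z=\overline{O^+(z)}$. The paper's one-line proof does not address this, but your proposal does not close the gap either.

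\textbf{Your third bullet is not an argument.} The pieces $\overline{O^+_{\sigma^n}(\sigma^i z)}$ are only $\sigma^n$-invariant. Minimality of $Z_0$ among closed $\sigma$-invariant sets mapping onto $Y$ therefore says nothing about them. In fact no argument using only that $\pi$ is finite-to-one and almost one-to-one can work.
\begin{enumerate}
\item On $\{0,1,2\}^\Z$ set $\pi(x)_i=x_i+x_{i+1}\bmod 2$ if $x_i,x_{i+1}\in\{0,1\}$, and $\pi(x)_i=x_i$ otherwise. This is an at most $2$-to-$1$ factor map of the full $3$-shift onto itself, injective over the dense $G_\delta$ of points with infinitely many $2$'s to the right.
\item Let $W\subseteq\{0,1\}^\Z$ be Sturmian and $\theta$ the bit flip, so $W\cap\theta W=\emptyset$. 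Put $\psi(w)=(w_i+i\bmod 2)_{i\in\Z}$ and $Y=\{(1+w_i+w_{i+1}\bmod 2)_i : w\in W\}$. Then $Y$ is conjugate to $W$, hence infinite and totally minimal.
\item One checks $\pi^{-1}(Y)=\psi(W)\sqcup\psi(\theta W)$ with $\sigma\psi(W)=\psi(\theta W)$. This is the only closed $\sigma$-invariant set that $\pi$ maps onto $Y$.
\item Its two halves are clopen and $\sigma^2$-invariant, so it is not totally transitive.
\item The splitting is read off a finite window, so no totally transitive subshift is Hausdorff-close to it. Hence $Y\notin\pi_*(\overline{TT'(\{0,1,2\}^\Z,\sigma)})$.
\end{enumerate}

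\textbf{Your second bullet has the right mechanism but not in the right form.} What works is a point $y\in Y$ that is \emph{both} totally transitive and has $\pi^{-1}(y)=\{z\}$. Take $Z=\overline{O^+(z)}$ and $Z_n=\overline{O^+_{\sigma^n}(z)}$.
\begin{enumerate}
\item Each $\sigma^i(Z_n)$ is closed, $\sigma^n$-forward-invariant, and maps onto $f^i(Y)=Y$.
\item So each $\sigma^i(Z_n)$ contains $z$, hence contains $Z_n$.
\item Iterating $\sigma(Z_n)\supseteq Z_n$ and using $\sigma^n(Z_n)\subseteq Z_n$ gives $\sigma(Z_n)=Z_n$, and therefore $Z_n=Z$.
\end{enumerate}
Merely sharing some unique-preimage point does not force the pieces to coincide. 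Also, density of such points in $X$ gives none in $Y$, which is nowhere dense unless $Y=X$; the example above is a $Y$ lying entirely in the non-injectivity set.

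\textbf{How the missing step could be supplied.} You have to use the Markov partition $\mathcal{R}$ itself. Non-injectivity occurs only on orbits meeting $\partial^s\mathcal{R}\cup\partial^u\mathcal{R}$, and $f(\partial^s\mathcal{R})\subseteq\partial^s\mathcal{R}$, $f^{-1}(\partial^u\mathcal{R})\subseteq\partial^u\mathcal{R}$. A Baire argument in the perfect space $Y$, using forward and backward transitivity, then shows one of two things. Either a totally transitive $y$ with a unique preimage exists, or $Y\subseteq\partial^s\mathcal{R}$ or $Y\subseteq\partial^u\mathcal{R}$. That last case still has to be ruled out or handled separately. As written, your proof of the reverse inclusion is incomplete.
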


        \begin{proof}
            Following the proof of Lemma \ref{lem:transitivesurj}, we get that $\pi_*(TT'(\Sigma,\sigma))=TT'(X,f)$. Since the map $\pi_*\colon S(\Sigma,\sigma)\to S(X,f)$ is continuous, it holds that 
            \[\pi_*(\overline{TT'(\Sigma,\sigma)})=\overline{TT'(X,f)}.\]
        \end{proof}

        \subsection{Genericity of topological properties}
        To apply the Baire Category Theorem, we state the results in terms of the complete set $\overline{TT'(X,f)}$. In this space, since $(Y,f)$ is totally transitive if and only if $(Y,f^n)$ is transitive for every $n\in\N$, the same argument as in Lemma \ref{lem:TdenseGdelta} shows that the countable intersection
            \[TT'(X,f)=\left(\bigcap_{n\in\N}T(X,f^n)\right)\cap S'(X,f)\]
        is a dense $G_\delta$ subset of $\overline{TT'(X,f)}$.
        Moreover, the same argument as in Lemma \ref{lem:Tperfect}, combined with Lemmas 6.2 and 6.3 in \cite{PavlovSchmieding} yield that $\overline{TT'(X,f)}$ is a perfect subset of $S(X,f)$. 

        In the following theorem we use Sturmian subshifts. For a definition of this type of subshift, see Section 13.7 of \cite{LindMarcus}.

        \begin{theorem}
            Let $(X,f)$ be a mixing Smale space. The family of subsystems that are minimal, uniquely ergodic, and have zero topological entropy is generic in $\overline{TT'(X,f)}$. 
        \end{theorem}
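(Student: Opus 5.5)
The plan is to repeat the strategy of Section~\ref{section:toptransitive}, since the $G_\delta$ parts of those arguments took place in the ambient space $S(X,f)$ and never used transitivity. The proof of Theorem~\ref{thm:MinimalgenericinT} shows that $M(X,f)$ is a $G_\delta$ subset of $S(X,f)$. The proof of Theorem~\ref{thm:UEgenericinT} shows the same for $\mathrm{UE}(X,f)$, and the proof of Theorem~\ref{thm:zeroentropy}, via Lemma~\ref{lem:entropyuppersemi}, does so for $\mathrm{ZE}(X,f)$. Intersecting these three with the closed set $\overline{TT'(X,f)}$ gives relative $G_\delta$ sets. Their intersection $M(X,f)\cap\mathrm{UE}(X,f)\cap\mathrm{ZE}(X,f)\cap\overline{TT'(X,f)}$ is therefore a $G_\delta$ subset of the complete metric space $\overline{TT'(X,f)}$. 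It remains only to prove that this intersection is dense; the Baire Category Theorem then gives genericity.

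For density I will transfer a dense family from the symbolic side through the surjection $\pi_*\colon\overline{TT'(\Sigma,\sigma)}\to\overline{TT'(X,f)}$ of the preceding lemma. Here $\Sigma$ is mixing by the remark above. By \cite{PavlovSchmieding}, Section~6, there is a dense set $\mathcal{D}\subseteq\overline{TT'(\Sigma,\sigma)}$ of subshifts that are minimal, uniquely ergodic and of zero entropy; I would take the generic set provided there, or the dense conjugacy class of a Sturmian subshift realized inside $\Sigma$. Continuity and surjectivity of $\pi_*$ give that $\pi_*(\mathcal{D})$ is dense in $\overline{TT'(X,f)}$. So it suffices to check that each of the three properties passes from $W\in\mathcal{D}$ to $\pi(W)$.

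Minimality passes to factors. Zero entropy passes because entropy does not increase under factor maps; in fact it is preserved, since $\pi$ is finite-to-one. Unique ergodicity also passes: every $f$-invariant measure on $\pi(W)$ lifts to a $\sigma$-invariant measure on $W$, because the push-forward map $\mathcal{M}_\sigma(W)\to\mathcal{M}_f(\pi(W))$ is surjective for factor maps between compact systems. Hence $\pi(W)$ carries exactly one invariant measure, the push-forward of the unique measure on $W$.

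The main obstacle is making sure the density input on the symbolic side really lives in $\overline{TT'(\Sigma,\sigma)}$, that is, that the cited symbolic results are stated for a mixing SFT $\Sigma$ rather than for the full shift only. If the genericity of minimal, uniquely ergodic, zero-entropy subshifts in $\overline{TT'(\Sigma,\sigma)}$ is not directly available, I would instead prove density of Sturmian-conjugate subsystems. Given an open set, I would first approximate by a mixing SFT $W\subseteq\Sigma$ using $n$-step approximations, as in Lemma~\ref{lem:Tperfect}. I would then embed a Sturmian subshift into a suitable higher-block presentation of $W$ close to $W$ in $d_{\mathcal H}$. Sturmian subshifts are minimal, uniquely ergodic and have zero entropy, and their images under $\pi$ stay in $\overline{TT'(X,f)}$.
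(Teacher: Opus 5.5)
Your main line is the paper's proof. The $G_\delta$ property comes from the same arguments as Theorems \ref{thm:MinimalgenericinT}, \ref{thm:UEgenericinT} and \ref{thm:zeroentropy}, and density comes from pushing the dense Sturmian conjugacy class of $\overline{TT'(\Sigma,\sigma)}$ (Theorem 6.4 of \cite{PavlovSchmieding}, which the paper also applies to the mixing SFT $\Sigma$ without comment) through $\pi_*$. Minimality, zero entropy and unique ergodicity then pass to factors; the paper cites \cite{DGS} for unique ergodicity, where you give the measure-lifting argument.

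Your fallback is unnecessary, and as sketched it would not work. A literal Sturmian subshift inside a positive-entropy $W$ cannot be $d_{\mathcal H}$-close to $W$ at small scales. One needs conjugate copies whose $n$-language is all of $\mathcal{L}_n(W)$, which is exactly what the cited theorem provides.
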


        \begin{proof}
            In the space $S(\Sigma,\sigma)$, the conjugacy class of any Sturmian subshift of $\Sigma$ is dense in $\overline{TT'(\Sigma,\sigma)}$ (Theorem 6.4, \cite{PavlovSchmieding}). Therefore, the previous lemma proves that the image of this conjugacy class under $\pi_*$ is dense in $\overline{TT'(X,f)}$. Recall that, since $\pi$ is finite-to-one, minimality and zero topological entropy are preserved under $\pi_*$. Furthermore, by Proposition 3.11 in \cite{DGS}, unique ergodicity is also preserved by $\pi_*$. We conclude that these properties hold on a dense set of $\overline{TT'(X,f)}$. 
            The proof of the fact that $\text{M}(X,f)\cap\text{ZE}(X,f)\cap \text{UE}(X,f)\cap \overline{TT'(X,f)}$ is a $G_\delta$ set of $\overline{TT'(X,f)}$ is identical to the proofs of Theorem \ref{thm:MinimalgenericinT}, Theorem \ref{thm:zeroentropy} and Theorem \ref{thm:UEgenericinT}.
        \end{proof}

        Within the space $\overline{TT'(X,f)}$, the family of subsystems conjugate to regular Toeplitz subshifts is no longer dense, since a regular Toeplitz subshift is never totally transitive. We now describe the family of subsystems that is dense in $\overline{TT'(X,f)}$.

        \begin{proposition}\label{prop:sturmiandense}
            Let $(X,f)$ be a mixing Smale space, $(\Sigma,\sigma)$ a mixing SFT and $\pi\colon\Sigma\to X$ a finite-to-one factor map. Let $S$ be a Sturmian subshift of $\Sigma$ and $\mathcal{C}$ its conjugacy class. Then $\mathcal{C}\cap S(X,f)$ is a dense subset of $\overline{TT'(X,f)}$.
        \end{proposition}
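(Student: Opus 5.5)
Throughout, $\mathcal{C}\cap S(X,f)$ is read as the subsystems of $X$ topologically conjugate to $S$. The plan has three steps: use the symbolic density result for Sturmian classes, push it to $X$ through the surjection $\pi_*$ of the previous lemma, and then show that the images stay conjugate to $S$.

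\textbf{Step 1 (symbolic density and transfer).} Fix $Y\in\overline{TT'(X,f)}$ and $\eps>0$. By the previous lemma, $\pi_*$ maps $\overline{TT'(\Sigma,\sigma)}$ onto $\overline{TT'(X,f)}$, and it is continuous by Proposition \ref{prop:contsurjpi}. Hence the set
\[U\coloneq\pi_*^{-1}(B_\mathcal{H}(Y,\eps))\cap\overline{TT'(\Sigma,\sigma)}\]
is nonempty and relatively open. By Theorem 6.4 of \cite{PavlovSchmieding}, the conjugacy class of any Sturmian subshift is dense in $\overline{TT'(\Sigma,\sigma)}$. So there is $W\in U$ conjugate to $S$. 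Then $\pi(W)\in B_\mathcal{H}(Y,\eps)$, and it remains to show that $\pi(W)$ is conjugate to $S$.

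\textbf{Step 2 ($\pi(W)$ is conjugate to a Sturmian subshift).} $W$ is minimal, so $\pi(W)$ is minimal. It is also expansive, being a subsystem of $X$, so by \cite{Mane} it is conjugate to a subshift $Y'$, and $\pi$ induces a finite-to-one factor map $W\to Y'$. Write $\rho$ for the rotation number of $S$. Sturmian subshifts are almost one-to-one extensions of the irrational rotation $R_\rho$, which is their maximal equicontinuous factor. Arguing as in Lemma \ref{lem:toeplitz}, via Theorem 3.1 of \cite{DownDurand}, $Y'$ is an almost one-to-one extension of an equicontinuous factor $G_2$ of $R_\rho$, and the induced map $R_\rho\to G_2$ is finite-to-one. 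Factors of an irrational circle rotation are rotations by $k\rho$ on a quotient circle. The finite-to-one group homomorphism $\mathbb{T}\to G_2$ has finite kernel, so $G_2$ is again a circle rotation, by $n\rho$ for some $n$.

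The complexity bounds of Lemma \ref{lem:complexityboundsubshift} and Proposition \ref{prop:complexitybound} then give $C(Y',n)\le M\,C(S,n+k)=O(n)$. So $Y'$ is a minimal subshift of linear complexity that is an almost one-to-one extension of an irrational rotation. To pin down the class, the more robust route is direct: $W$ is a Sturmian coding of $R_\rho$ by a partition into two arcs, and $\pi$ is a sliding block code. Hence $Y'$ is the coding of $R_\rho$ by the finite partition generated by the translates of those two arcs. This is a rotation coding by finitely many intervals, with discontinuities at finitely many points of $\Z\rho$.

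\textbf{Step 3 (recovering $S$ itself), expected to be the main obstacle.} A finite-to-one image of a Sturmian subshift need not be Sturmian a priori; for example, a recoding can merge the two arcs. The first thing to try is to exploit the fact that we are free to choose $W$ within its conjugacy class. Given any $W$ from Step 1, conjugate it by a higher-block recoding to a Sturmian-class subshift whose language is refined enough that the block map defining $\pi$ separates the two arcs. Combined with the fact that $\pi$ is almost one-to-one on a dense $G_\delta$, this should make $\pi|_W$ injective. Injectivity would follow if no two distinct points of $W$ identified by $\pi$ can exist. Such a pair would be asymptotic in one direction, by the resolving decomposition used in Proposition \ref{prop:complexitybound}. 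In a Sturmian system, asymptotic pairs lie on the single exceptional orbit pair over the discontinuity. So the first attempt is to perturb $W$ within $U$, which is possible since $U$ is open, to avoid this one orbit pair lying in the non-injectivity locus of $\pi$, for instance by a small rotation of the coding window.

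If this works, $\pi|_W$ is a conjugacy and $\pi(W)\in\mathcal{C}\cap S(X,f)$, which proves density.
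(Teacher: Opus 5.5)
Your Step 1 matches the paper's argument, but Step 3 is a genuine gap. It is a plan (``should make $\pi|_W$ injective'', ``if this works'') rather than a proof, and it targets an obstacle that does not exist. Several of its ingredients also fail:
\begin{itemize}
\item The map $\pi$ itself is not resolving; only the lift $\tilde\pi$ in Putnam's diagram is. So two points identified by $\pi$ need not be asymptotic: a periodic point on the boundary of the Markov partition can have distinct periodic codings, and these disagree infinitely often in both directions.
\item Injectivity of $\pi$ over a residual subset of $X$ says nothing about the nowhere dense set $\pi(W)$.
\item Nothing guarantees that a ``small rotation of the coding window'' gives a subshift lying simultaneously in $\Sigma$, in $U$, and in the conjugacy class of $S$.
\end{itemize}
Your worry that a recoding can ``merge the two arcs'' conflates being Sturmian (complexity $n+1$) with being conjugate to $S$. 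Merging the arcs outright produces a single point, which a finite-to-one map cannot do to an aperiodic system.

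The missing idea is the rigidity of factors of Sturmian subshifts. The paper takes this from Proposition 11 of \cite{Durand}: a factor of $S$ is either conjugate to $S$ or a factor of its maximal equicontinuous factor $(S^1,R_\rho)$. The image $\pi(W)$ is infinite and expansive. Factors of $R_\rho$ are equicontinuous, and an infinite equicontinuous system is never expansive. Hence $\pi(W)$ is conjugate to $S$ for \emph{every} $W$ in the class, and no perturbation is needed.

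You can also finish from your own Step 2 without Durand. Work with the composite sliding block code $S\to W\to\pi(W)\to Y'$; note that $W$ is only conjugate to a two-arc coding, so you should start from $S$. This composite sends the two-arc itinerary of $x\notin\Z\rho$ to its $Y'$-itinerary. Let $D\subseteq\Z\rho$ be the finite set of discontinuities of the labelling defining $Y'$; it is nonempty because $Y'$ is infinite.
\begin{itemize}
\item Suppose two distinct circle points $x$ and $y=x+t$ had the same $Y'$-itinerary, allowing one-sided limit itineraries. Density of $\{x+n\rho\}$ would force $D+t=D$. This is impossible, since a nonempty finite subset of a single irrational orbit is invariant under no nonzero translation. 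So points of $Y'$ over distinct circle points are distinct, and it suffices to check injectivity fiberwise.
\item Over $x\notin\Z\rho$, the fibers are singletons.
\item Over $x\in\Z\rho$, the two points of $S$ are the left- and right-limit itineraries. By continuity they go to the left- and right-limit $Y'$-itineraries, which differ at any coordinate $n$ with $x+n\rho\in D$.
\end{itemize}
Hence the composite is a bijection, and $\pi(W)$ is conjugate to $S$.
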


        \begin{proof}
            Let $S$ be a Sturmian subshift of $(\Sigma,\sigma)$, which exists by Krieger's embedding theorem. Denote by $\mathcal{C}(S)$ its conjugacy class in $\Sigma$, which is dense in $\overline{TT'(\Sigma,\sigma)}$ by Theorem 6.4 in \cite{PavlovSchmieding}. Then $\pi_*(\mathcal{C}(S))$ is dense in $\overline{TT'(X,f)}$. For every subsystem $Y\in\pi_*(\mathcal{C}(S))$ there exist $Z\in\pi_*^{-1}(Y)$ and a conjugacy $\phi_Z\colon S\to Z$, yielding a finite-to-one factor map $\pi\circ\phi_Z\colon S \to Y$ between topological dynamical systems, so $Y$ is aperiodic and minimal. If $Y$ is not conjugate to $S$, then by Proposition 11 in \cite{Durand}, $Y$ is a factor of the maximal equicontinuous factor of $(S,\sigma)$, which is an irrational rotation $(S^1,R_\alpha)$. However, $Y$ inherits the expansivity of $(X,f)$ and therefore this case is impossible. It must be that $Y$ is conjugate to $S$, so the conjugacy class of $S$ is also dense in $\overline{TT'(X,f)}$. 
        \end{proof}

        \subsection{Weakly mixing measures for a mixing subshift of finite type}
                In this subsection we prove that weak mixing measures for a mixing SFT are generic in the weak-\textasteriskcentered\ topology. This result is needed in Subsection \ref{subsec:WMsubsystem} in order to obtain a $G_\delta$ subset of $S(X,f)$ as the preimage of this residual set under a suitable continuous map.
                
                Let $(\Sigma,\sigma)$ be a mixing SFT over a finite alphabet. A measure $\mu\in\mathcal{M}_\sigma(\Sigma)$, the space of $\sigma$-invariant Borel probability measures, is weakly mixing if and only if for every $f\in L^2(\mu)$
                    \[\lim_{n\to\infty}\dfrac{1}{n}\sum_{j=1}^n \left|\int (f\circ \sigma^j)f\,d\mu-\left(\int f\,d\mu\right)^2\right|^2=0.\]
                A proof of this characterization can be found in Theorem 1.23 (ii) in \cite{Walters}. We can replace $f\in L^2(\mu)$ with $f\in C(\Sigma,\mathbb{R})$ because of the density of the latter in the former space. Note that this equivalence holds for every $(X,f)$ if $X$ is a compact metric space and $f$ is continuous.

                We use this equivalent definition of a weakly mixing measure to prove that weak mixing is a generic property in $\mathcal{M}_\sigma(\Sigma)$. This space is endowed with the weak-\textasteriskcentered\ topology. 

                In \cite{Sigmundconnectedness}, Sigmund writes that weakly mixing invariant measures form a dense \(G_\delta\) for a mixing shift of finite type, referring to earlier results for Axiom A diffeomorphisms. Since the passage from those results to the symbolic statement is not made explicit there, we give a direct proof below.

                \begin{lemma}\label{lem:weakmixingSFTGdelta}
                    Let $(\Sigma,\sigma)$ be a mixing SFT. The set of its weakly mixing measures is a $G_\delta$ set in $\mathcal{M}_\sigma(\Sigma)$.
                \end{lemma}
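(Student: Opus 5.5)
We will write the set $\mathrm{WM}$ of weakly mixing measures as a countable intersection of open sets in $\mathcal{M}_\sigma(\Sigma)$, using the characterization via continuous functions recalled above. Fix a countable family $\{g_i\}_{i\in\N}$ that is dense in $C(\Sigma,\mathbb{R})$ in the sup norm. Since $\Sigma$ is zero-dimensional, we may take the $g_i$ to be rational linear combinations of indicator functions of cylinder sets, which are continuous. For $\mu\in\mathcal{M}_\sigma(\Sigma)$ and $g\in C(\Sigma,\mathbb{R})$ set
\[ A_n(\mu,g)\coloneq \dfrac{1}{n}\sum_{j=1}^n\left|\int (g\circ\sigma^j)g\,d\mu-\left(\int g\,d\mu\right)^2\right|^2 .\]
For fixed $n$ and $g$, each integral $\int (g\circ\sigma^j)g\,d\mu$ and $\int g\,d\mu$ is a continuous function of $\mu$ in the weak-\textasteriskcentered\ topology, since the integrands are continuous. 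Hence $\mu\mapsto A_n(\mu,g)$ is continuous. It follows that
\[ O(i,k,n)\coloneq\{\mu\in\mathcal{M}_\sigma(\Sigma)\mid A_n(\mu,g_i)<1/k\} \]
is open, and the candidate $G_\delta$ set is $\bigcap_{i}\bigcap_{k}\bigcup_{n} O(i,k,n)$.

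The main obstacle is that this set only records $\liminf_n A_n(\mu,g_i)=0$, whereas weak mixing requires the full limit. I would deal with this using the standard fact that $\lim_n A_n(\mu,g)=0$ holds if and only if $\liminf_n A_n(\mu,g)=0$. The cleanest route is spectral. Write $\hat\nu_g(j)=\int (g\circ\sigma^j)g\,d\mu-(\int g\,d\mu)^2$, the Fourier coefficients of the reduced spectral measure $\nu_g$ of $g-\int g\,d\mu$. By Wiener's lemma, $\frac1n\sum_{j=1}^n|\hat\nu_g(j)|^2\to\sum_{t}\nu_g(\{t\})^2$, and this limit always exists. So $\liminf A_n=0$ already forces $\lim A_n=0$, and both are equivalent to $\nu_g$ having no atoms. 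Thus
\[ \mathrm{WM}=\bigcap_i\{\mu\mid \textstyle\lim_n A_n(\mu,g_i)=0\}=\bigcap_{i,k}\bigcup_n O(i,k,n). \]

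The remaining step is to check that testing only the countable dense family $\{g_i\}$ is enough, so that the equality holds for all $g\in C(\Sigma,\mathbb{R})$ and therefore for all of $L^2(\mu)$. For a fixed $\mu$ we have $|A_n(\mu,g)-A_n(\mu,g')|\le C(\|g\|_\infty,\|g'\|_\infty)\,\|g-g'\|_\infty$ uniformly in $n$, since every correlation is bounded by products of sup norms. So if $\lim_n A_n(\mu,g_i)=0$ for all $i$, then $\lim_n A_n(\mu,g)=0$ for every continuous $g$. By $L^2$-density and the Walters characterization, $\mu$ is then weakly mixing. The reverse inclusion is immediate from that same characterization. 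Combining the three steps shows that $\mathrm{WM}$ is a countable intersection of open sets, that is, a $G_\delta$ set in $\mathcal{M}_\sigma(\Sigma)$.
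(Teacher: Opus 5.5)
Your argument follows the paper's. You use the same averages $A_n$, the weak-\textasteriskcentered\ continuity of $\mu\mapsto A_n(\mu,g)$, and the existence of $\lim_n A_n(\mu,g)$. You get that limit from Wiener's lemma, while the paper gets it from von Neumann's ergodic theorem on $(\Sigma\times\Sigma,\sigma\times\sigma,\mu\times\mu)$. These are the same fact, since $|\hat\nu_g(j)|^2$ is the $j$-th autocorrelation of $\tilde g\otimes\tilde g$ with $\tilde g=g-\int g\,d\mu$. Your uniform-in-$n$ estimate, which justifies testing only a countable dense family, fills in a step the paper leaves implicit.

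\textbf{The error.} The set you write down is not the one you describe. Since $\bigcup_n O(i,k,n)=\{\mu\mid \exists n,\ A_n(\mu,g_i)<1/k\}$, you get $\bigcap_k\bigcup_n O(i,k,n)=\{\mu\mid \inf_n A_n(\mu,g_i)=0\}$, not $\{\mu\mid \liminf_n A_n(\mu,g_i)=0\}$.

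This set contains every $\mu$ with $A_1(\mu,g_i)=0$, that is, with $\int (g_i\circ\sigma)g_i\,d\mu=(\int g_i\,d\mu)^2$, whether or not $\mu$ is weakly mixing. For instance, take the uniform measure on a periodic orbit of period $4$, and $g$ taking the values $1,0,-1,0$ along it. Then $A_1=0$, while $A_n\to 1/8$. So the per-$i$ identity $\{\mu\mid\lim_n A_n(\mu,g_i)=0\}=\bigcap_k\bigcup_n O(i,k,n)$, implicit in your last display, is false. Your Wiener-lemma step only shows that $\liminf_n A_n=0$ implies $\lim_n A_n=0$, so it does not address the gap between $\inf$ and $\liminf$.

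Your final equality, taken over the whole dense family $\{g_i\}$, can in fact be rescued. For a non-weakly-mixing $\mu$ one can pick $g_i$ with both $\lim_n A_n(\mu,g_i)>0$ and $A_1(\mu,g_i)>0$. But that needs a separate nondegeneracy argument which you do not give.

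\textbf{The fix.} Use tail unions, as the paper does:
$\mathrm{WM}=\bigcap_{i}\bigcap_{k}\bigcap_{m}\bigcup_{n\ge m}O(i,k,n)$.
This is exactly $\bigcap_i\{\mu\mid\liminf_n A_n(\mu,g_i)=0\}$, and it is still a countable intersection of open sets. With that change your proof is complete.
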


                \begin{proof}
                    Let $\{f_k\}_{k\in\N}$ be a dense sequence of functions in the separable space $C(\Sigma,\mathbb{R})$. For each $k\in\N$ define the quantities
                        \[A_n(k,\mu)\coloneq \dfrac{1}{n}\sum_{j=1}^n \left|\int (f_k\circ\sigma^ j)f_k\,d\mu-\left(\int f_k\,d\mu\right)^2\right|^2, \quad A(k,\mu)\coloneq \liminf_{n\to\infty}A_n(k,\mu).\]
                    The limit $\lim_{n\to\infty}A_n(k,\mu)$ exists for every $\sigma$-invariant measure $\mu\in\mathcal{M}_\sigma(\Sigma)$, as follows from von Neumann's ergodic theorem applied to the system $(\Sigma\times\Sigma,\sigma\times\sigma,\mu\times\mu)$. A proof of this fact can be found in \cite{ParthasarathyMixing}. Therefore, denoting by $\text{WMM}(\Sigma,\sigma)\subseteq\mathcal{M}_\sigma(\Sigma)$ the set of weakly mixing measures of $(\Sigma,\sigma)$, it holds that
                        \begin{align*}
                            \text{WMM}(\Sigma,\sigma)&=\{\mu\in\mathcal{M}_\sigma(\Sigma) \mid A(k,\mu)=0\,\forall k\in\mathbb{N}\}\\
                            &=\bigcap_{k\in\N}\bigcap_{l\in\N}\bigcap_{m\in\N}\bigcup_{n\geq m}\{\mu\in\mathcal{M}_\sigma(\Sigma) \mid A_n(k,\mu)<1/l\}.
                        \end{align*}
                    The set $\{\mu\in\mathcal{M}_\sigma(\Sigma) \mid A_n(k,\mu)<1/l\}$ is open in the weak-\textasteriskcentered\ topology whenever $k$, $n$ and $l$ are fixed since $A_n(k,\mu)$ is weak-\textasteriskcentered\ continuous. The result follows. 
                \end{proof}

                Observe that this proof is valid for any dynamical system $(X,f)$ with $X$ compact metric and $f$ continuous.
                Periodic measures are dense in $\mathcal{M}_\sigma(\Sigma)$ if the subshift $(\Sigma,\sigma)$ has the specification property (Theorem 1 in \cite{Sigmund}), which is true when $\Sigma$ is a mixing SFT. In order to prove density of $\text{WMM}(\Sigma,\sigma)$ in $\mathcal{M}_\sigma(\Sigma)$, we prove that the strongly mixing measures approximate periodic measures arbitrarily well. We do so by means of thermodynamic formalism for SFTs.

                We first adapt a result in \cite{Coudene} to our setting. We present the proof here since it is brief.

                \begin{lemma}\label{lem:urysohn}
                    Let $X$ be a compact metric space, $\sigma\colon X\to X$ continuous, $f:X\to\mathbb{R}$ a bounded continuous function, $K\subseteq X$ a compact subset of $X$ such that $\sigma(K)=K$, $V\supseteq K$ a neighborhood of $K$ and $c'<c$ such that
                    \begin{enumerate}[font=\upshape]
                        \item $f(x)=c$ $\forall x\in K$,
                        \item $f(x)\leq c$ $\forall x\in X$,
                        \item $f(x)\leq c'$ $\forall x\in V^c$.
                    \end{enumerate}
                    Then, for any equilibrium measure $\mu$ associated with $f$ 
                        \[\mu(X\setminus V)\leq \dfrac{h_\mu(\sigma)}{c-c'}.\]
                \end{lemma}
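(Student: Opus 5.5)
The plan is to compare the pressure of $f$ at $\mu$ with the pressure at an invariant measure supported on $K$, using that $\mu$ is an equilibrium measure. Since $\sigma(K)=K$ and $K$ is compact, the Krylov--Bogolyubov theorem gives a $\sigma$-invariant Borel probability measure $\nu$ with $\supp\nu\subseteq K$. By the definition of an equilibrium measure, $\mu$ maximizes $h_m(\sigma)+\int f\,dm$ over $\mathcal{M}_\sigma(X)$, so in particular
\[
h_\mu(\sigma)+\int f\,d\mu\;\geq\; h_\nu(\sigma)+\int f\,d\nu\;\geq\;\int f\,d\nu \;=\; c,
\]
where the last two steps use $h_\nu(\sigma)\geq 0$ and hypothesis (1), since $f\equiv c$ on $K\supseteq\supp\nu$.

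Next I would bound $\int f\,d\mu$ from above by splitting $X=V\sqcup(X\setminus V)$. On $V$, hypothesis (2) gives $f\leq c$; on $X\setminus V$, hypothesis (3) gives $f\leq c'$. Therefore
\[
\int f\,d\mu\;\leq\; c\,\mu(V)+c'\,\mu(X\setminus V)\;=\;c-(c-c')\,\mu(X\setminus V).
\]
Combining this with the previous display yields $h_\mu(\sigma)+c-(c-c')\mu(X\setminus V)\geq c$, that is, $(c-c')\mu(X\setminus V)\leq h_\mu(\sigma)$. Dividing by $c-c'>0$ gives the claimed inequality.

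There is no serious obstacle. The only points to check are that an invariant measure on $K$ exists, which follows from invariance and compactness of $K$, and that $V$ is measurable, which holds because a neighborhood may be taken to be open; boundedness of $f$ ensures that all the integrals involved are finite. If $h_\mu(\sigma)=\infty$ the statement is vacuous, so one may assume finite entropy throughout.
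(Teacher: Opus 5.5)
Your proposal is correct and follows the paper's proof step for step: take a Krylov--Bogoliubov measure supported on $K$, apply the equilibrium inequality to get $h_\mu(\sigma)+\int f\,d\mu\geq c$, and split $\int f\,d\mu\leq c\,\mu(V)+c'\mu(X\setminus V)$. One small caveat: you cannot shrink $V$ to an open subset, since that enlarges $V^c$ and could break hypothesis (3), so the measurability of $V$ should be read as implicit in the statement rather than arranged by replacing $V$.
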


                \begin{proof}
                    Consider the restricted map $\sigma_K\coloneq \sigma\big|_{K}\colon K\to K\in C(K,K)$. By the Krylov--Bogoliubov theorem, there is a $\sigma_K$-invariant measure $\mu_K$ supported on $K$. We extend $\mu_K$ to a $\sigma$-invariant measure on $X$ $\tilde{\mu}_K$ by $\tilde{\mu}_K(A)\coloneq \mu_K(A\cap K)$. Since $X$ is compact metric and $f$ is continuous, the variational principle for the pressure holds (\cite{Bowen}).
                    Therefore, if $\mu$ is an equilibrium measure for the potential $f$, then 
                        \[h_\mu(\sigma)+\int f\,d\mu\geq h_{\tilde{\mu}_K}(\sigma)+\int f\,d\tilde{\mu}_K\geq \int f\,d\tilde{\mu}_K=c.\]
                    The upper bounds that $f$ satisfies give the estimate
                        \[\int f\,d\mu=\int_V f\,d\mu+\int_{X\setminus V}f\,d\mu\leq c\mu(V)+c'\mu(X\setminus V).\]
                    We combine both inequalities to obtain
                        \[h_\mu(\sigma)\geq c-\int f\,d\mu\geq c-(c\mu(V)+c'\mu(X\setminus V))=(c-c')\mu(X\setminus V).\]
                    A rearrangement of the extremal terms yields the desired result.
                \end{proof}

                The lemma above illustrates the following fact: if we choose a potential $f$ for which the difference $c-c'$ large enough, an equilibrium measure $\mu$ (if it exists) assigns small measure to the set $X\setminus V$. We can choose $K$ as a periodic orbit associated to a given periodic measure and construct a family of continuous potentials with an increasing difference $c-c'$ and expect any accumulation point in $\mathcal{M}_\sigma(\Sigma)$ of the sequence of equilibrium measures to be a periodic measure. This is the strategy we follow in the next lemma.
                
                In order to use the classical thermodynamic formalism for the mixing subshift $\Sigma$, we need to consider Hölder potentials. This guarantees the finiteness of the topological pressure. 

                \begin{lemma}\label{lem:stronglymixingdense}
                    Let $(\Sigma,\sigma)$ be a mixing SFT. Every $\sigma$-invariant periodic measure is an accumulation point of a sequence of strongly mixing measures in $\mathcal{M}_\sigma(\Sigma)$.
                \end{lemma}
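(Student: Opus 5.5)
Let $\nu$ be a $\sigma$-invariant periodic measure, supported on the periodic orbit $K=O(p)$ of period $q$, so $\nu=\frac1q\sum_{j<q}\delta_{\sigma^j p}$. I plan to apply Lemma~\ref{lem:urysohn} to a family of Hölder potentials that peak on $K$. The relevant equilibrium measures are strongly mixing, and their accumulation points will be forced onto $K$.

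\textbf{Construction of the potentials.} For $N\in\N$, let $V_N$ be the union of the cylinders $\sigma^{-j}(C_{w_j})$ with $0\le j<q$, where $w_j$ is the central word of length $2N+1$ of $\sigma^j(p)$. Thus $V_N$ is a clopen neighborhood of $K$ with $V_N\downarrow K$. Define
\[
\phi(x)=-d(x,K),
\]
which is Lipschitz, hence Hölder. Set $f_t=t\phi$ for $t>0$. Then $f_t=0$ on $K$ and $f_t\le 0$ everywhere. On $V_N^c$ we have $d(x,K)\ge 2^{-N}$, so $f_t\le -t2^{-N}$ there. Lemma~\ref{lem:urysohn} with $c=0$ and $c'=-t2^{-N}$ gives, for the unique equilibrium state $\mu_t$ of $f_t$,
\[
\mu_t(\Sigma\setminus V_N)\le \frac{h_{\mu_t}(\sigma)}{t2^{-N}}\le \frac{h_{\mathrm{top}}(\Sigma)\,2^{N}}{t}.
\]
For every fixed $N$, this tends to $0$ as $t\to\infty$.

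\textbf{Strong mixing.} For a mixing SFT and a Hölder potential, the classical Bowen--Ruelle theory (\cite{Bowen}) gives a unique equilibrium state $\mu_t$. This measure is a Gibbs measure and is Bernoulli, in particular strongly mixing, because of the spectral gap of the Ruelle transfer operator. I would cite this rather than reprove it.

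\textbf{Identifying the limit.} Take $t_k\to\infty$ and pass to a weak-$*$ convergent subsequence $\mu_{t_k}\to\mu$. Since $V_N$ is clopen, its indicator is continuous, so $\mu(\Sigma\setminus V_N)=\lim_k\mu_{t_k}(\Sigma\setminus V_N)=0$ for every $N$. Hence $\mu(K)=1$. An invariant measure supported on a single periodic orbit is the uniform measure on that orbit, because $\sigma$ permutes the $q$ points cyclically and invariance forces equal masses. Therefore $\mu=\nu$. Since every subsequence has a further subsequence converging to $\nu$, in fact $\mu_t\to\nu$.

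\textbf{Main obstacle.} The only nontrivial input is the strong mixing of $\mu_t$, which depends on invoking the thermodynamic formalism correctly: the potentials must be Hölder and the SFT must be mixing. Everything else follows directly from Lemma~\ref{lem:urysohn} and the entropy bound $h_{\mu_t}\le h_{\mathrm{top}}(\Sigma)<\infty$. A minor point to check is that the Hölder constant of $t\phi$ grows with $t$. This is harmless, because uniqueness and mixing hold for each fixed $t$ separately.
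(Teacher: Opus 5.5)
Your proposal is correct and follows essentially the paper's argument: Hölder potentials peaking on $K$, Lemma~\ref{lem:urysohn} to bound the mass that the strongly mixing Gibbs (equilibrium) measures give to $\Sigma\setminus V$, and a weak-$*$ limit supported on $K$, which must be the periodic measure. The differences are cosmetic. You scale one potential, $-t\,d(x,K)$, with $V_N$ fixed and let $t\to\infty$, whereas the paper uses tent potentials $n\max\{1-\delta_n^{-1}d(x,K),0\}$ whose support shrinks with $n$; also, your $V_N$ should read $\{x : x_{[-N,N]}=(\sigma^j p)_{[-N,N]} \text{ for some } j\}$, since $\bigcup_j\sigma^{-j}(C_{w_j})$ as literally written does not contain all of $K$.
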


                \begin{proof}
                    Let $\mu\in\mathcal{M}_\sigma(\Sigma)$ be a fixed periodic measure, and let $K$ be the periodic orbit supporting it. Consider the sequence of neighborhoods of $K$ defined by $V_n\coloneq B(K,2^{-n})=\overline{B(K,2^{-(n+1)})}$. The compactness of $K$ and $\Sigma\setminus V_n$ implies that $d(K,\Sigma\setminus V_n)\eqcolon \delta_n>0$ for every $n\in\N$.
                    We define, for each $n\in\N$, the Hölder potential $f_n\colon X\to\mathbb{R}$ by
                        \[f_n(x)\coloneq n \max\{1-\delta_n^{-1}d(x,K),0\},\]
                    which is identically $n$ on $K$, vanishes in $\Sigma\setminus V_n$ and satisfies $0\leq f_n(x)\leq n$ for every $x\in X$. It is easy to verify that 
                        \[\sup_{k\in\N}\dfrac{\text{var}_k(f_n)}{2^{-k}}\leq n\delta_n^{-1}<\infty,\]
                    and consequently there exists a Gibbs measure $\mu_n$ associated to the Hölder potential $f_n$, as proved in Theorem 1.22 in \cite{Bowen}. The Gibbs measure $\mu_n$ is strongly mixing. 

                    By Lemma \ref{lem:urysohn}, with $c=n$ and $c'=0$, and the variational principle of the entropy
                        \[\mu_n(\Sigma\setminus V_n)\leq \dfrac{h_{\mu_n}(\sigma)}{n}\leq \dfrac{h(\sigma,\Sigma)}{n}.\]
                    Extract a convergent subsequence $\{\mu_{n_k}\}_{k\in\N}$ with limit $\nu\in\mathcal{M}(\Sigma)$. The support of $\nu$ is contained in $K$ since for every $m\in\N$
                        \[\nu(\Sigma\setminus V_m)\leq \liminf_{n_k\geq m} \mu_{n_k} (\Sigma\setminus V_m)\leq \liminf_{k\to\infty}\mu_{n_k}(\Sigma\setminus V_{n_k})\leq \liminf_{k\to\infty}\dfrac{h(\sigma,\Sigma)}{n_k}=0.\] 
                    The $\sigma$-invariance of $\nu$ implies that $\supp(\nu)=K$. In conclusion, $\mu=\nu$ and the claim is proved.                    
                \end{proof}

                Note that a direct consequence of this result is that strongly mixing measures are dense in $\mathcal{M}_\sigma(\Sigma)$, and so are the weakly mixing ones. We can now combine Lemma \ref{lem:weakmixingSFTGdelta} and Lemma \ref{lem:stronglymixingdense} to obtain the following result.

                \begin{theorem}
                    Let $(\Sigma,\sigma)$ be a mixing SFT over a finite alphabet and $\mathcal{M}_\sigma(\Sigma)$ the space of $\sigma$-invariant Borel probability measures. Then the weakly mixing measures are generic in $\mathcal{M}_\sigma(\Sigma)$. 
                \end{theorem}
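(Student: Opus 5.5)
The theorem follows by combining Lemma \ref{lem:weakmixingSFTGdelta} and Lemma \ref{lem:stronglymixingdense} with the Baire Category Theorem. Since $\Sigma$ is a compact metric space, $\mathcal{M}_\sigma(\Sigma)$ with the weak-\textasteriskcentered\ topology is a compact metrizable space, hence a Baire space. So it suffices to show that $\text{WMM}(\Sigma,\sigma)$ is a dense $G_\delta$ subset.

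The $G_\delta$ part is exactly Lemma \ref{lem:weakmixingSFTGdelta}. For density, I argue in two steps.

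First, by Theorem 1 in \cite{Sigmund}, a mixing SFT has the specification property, so periodic measures are dense in $\mathcal{M}_\sigma(\Sigma)$. Fix a nonempty open set $U\subseteq\mathcal{M}_\sigma(\Sigma)$. Choose a periodic measure $\mu\in U$.

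Second, by Lemma \ref{lem:stronglymixingdense}, $\mu$ is an accumulation point of a sequence of strongly mixing measures. Hence $U$ contains a strongly mixing measure $\mu_n$. Strong mixing implies weak mixing: if $\int (f\circ\sigma^j)f\,d\mu_n\to(\int f\,d\mu_n)^2$, then the Cesàro averages of the squared differences also tend to zero. So $U\cap\text{WMM}(\Sigma,\sigma)\neq\emptyset$, and weakly mixing measures are dense.

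Since $\text{WMM}(\Sigma,\sigma)$ is a dense $G_\delta$ in a compact metric space, it is residual, which is the claim.

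The only point needing care is in the density step. The measures $\mu_n$ of Lemma \ref{lem:stronglymixingdense} are Gibbs measures for the Hölder potentials $f_n$, and their mixing property relies on $\Sigma$ being mixing, not merely irreducible. This is precisely the hypothesis assumed here, so I expect no genuine obstacle.
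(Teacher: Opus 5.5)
Your proposal is correct and follows the paper's own argument: the $G_\delta$ property is Lemma~\ref{lem:weakmixingSFTGdelta}, and density comes from combining the density of periodic measures (specification, \cite{Sigmund}) with Lemma~\ref{lem:stronglymixingdense} and the fact that strong mixing implies weak mixing. The appeal to the Baire property is harmless but not needed, since the set of weakly mixing measures is itself a dense $G_\delta$.
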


            \subsection{Weakly mixing measures of subsystems in \texorpdfstring{$\overline{TT'(X,f)}$}{TT'(X,f)}}\label{subsec:WMsubsystem}
                In this section we obtain a genericity result concerning weakly mixing measures of subsystems in $\overline{TT'(X,f)}$. In the space of subshifts, studied in \cite{PavlovSchmieding}, the set of uniquely ergodic systems whose unique invariant measure is weakly mixing is a residual set in $\overline{TT'(\Sigma,\sigma)}$ whenever $\Sigma$ is a full shift. Since the current setting requires $(\Sigma,\sigma)$ to be a mixing SFT, the previous section extends this result to a broader class of subshifts.

                We start by translating genericity of weakly mixing measures in $\mathcal{M}_f(X)$ to genericity of subsystems in $\overline{TT'(X,f)}$ that are uniquely ergodic.
                
                \begin{lemma}
                    Let $(X,f)$ be a mixing Smale space and $\mathcal{M}_f(X)$ the space of $f$-invariant Borel probability measures on $X$ endowed with the weak-\textasteriskcentered\ topology. The function $g\colon \mathrm{UE}(X,f)\to \mathcal{M}_f(X)$ defined by $Y\mapsto \mu_Y$, where $\mu_Y$ is the unique invariant measure of $(Y,f)$, is continuous, i.e., if
                        \begin{enumerate}[font=\upshape]
                            \item each $(Y_n,f)$ is uniquely ergodic with $f$-invariant probability measure $\mu_n$,
                            \item $(Y,f)$ is uniquely ergodic with unique $f$-invariant probability measure $\mu_Y$,
                            \item and $Y_n\to Y$ in the Hausdorff metric,
                        \end{enumerate}
                    then $\mu_n\to\mu_Y$ in the weak-\textasteriskcentered\ topology.
                \end{lemma}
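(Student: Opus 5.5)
The plan is the standard compactness-plus-uniqueness argument in the weak-\textasteriskcentered\ topology. Since $X$ is a compact metric space, $\mathcal{M}_f(X)$ is compact and metrizable. It therefore suffices to show that every subsequence of $(\mu_n)$ has a further subsequence converging to $\mu_Y$. So fix a subsequence, still denoted $(\mu_n)$, and use weak-\textasteriskcentered\ compactness to extract a further subsequence $\mu_{n_j}\to\nu$ for some $\nu\in\mathcal{M}(X)$.

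First, $\nu$ is $f$-invariant. For every $\phi\in C(X,\mathbb{R})$ we have $\int\phi\circ f\,d\mu_{n_j}=\int\phi\,d\mu_{n_j}$, and $\phi\circ f$ is continuous, so both sides pass to the limit.

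Second, and this is the only step requiring care, $\supp\nu\subseteq Y$. I would repeat verbatim the argument in the proof of Lemma \ref{lem:entropyuppersemi}:
\begin{itemize}
\item Take $y_0\notin Y$ and $r>0$ with $B(y_0,r)\subseteq Y^c$.
\item Since $Y_{n_j}\to Y$ in $d_\mathcal{H}$, for large $j$ the set $Y_{n_j}$ lies in the $r/2$-neighborhood of $Y$, hence $B(y_0,r/2)\cap Y_{n_j}=\emptyset$.
\item Choose a continuous Urysohn function $g\colon X\to[0,1]$ with $g\equiv1$ on $B(y_0,r/4)$ and $g\equiv 0$ off $B(y_0,r/2)$.
\item Since $\mu_{n_j}$ is supported on $Y_{n_j}$, we get $\int g\,d\mu_{n_j}=0$, and letting $j\to\infty$ gives $\nu(B(y_0,r/4))\leq\int g\,d\nu=0$.
\end{itemize}
Covering the open set $Y^c$ by countably many such balls yields $\nu(Y^c)=0$.

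Hence $\nu$, or more precisely its restriction to $Y$, is an $f$-invariant Borel probability measure on $Y$. By unique ergodicity of $(Y,f)$ we get $\nu=\mu_Y$. Since the subsequence was arbitrary, $\mu_n\to\mu_Y$.

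The main obstacle is the support inclusion, and it is already handled by the earlier lemma's argument. Note that neither the mixing of $(X,f)$ nor the unique ergodicity of the $Y_n$ is needed beyond identifying $\mu_n$ as some invariant measure supported on $Y_n$.
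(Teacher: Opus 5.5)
Your proposal is correct and follows essentially the same route as the paper. You pass to a weak-\textasteriskcentered\ convergent subsequence, check that the limit $\nu$ is $f$-invariant and vanishes off $Y$ using Hausdorff convergence, and conclude $\nu=\mu_Y$ by unique ergodicity. The differences are cosmetic: you test against a Urysohn function, as in Lemma~\ref{lem:entropyuppersemi}, where the paper applies the portmanteau inequality $\nu(B(x,\eps))\leq\liminf_j\mu_{n_j}(B(x,\eps))$ directly, and you spell out the sub-subsequence argument and the countable cover of $Y^c$ that the paper leaves implicit.
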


                \begin{proof}
                    Let $\{\mu_{n_j}\}_{j\in\N}$ be any subsequence of $\{\mu_n\}_{n\in\N}$ converging in the weak-\textasteriskcentered\ topology to a probability measure $\nu\in\mathcal{M}_f(X)$. It suffices to show that $\mu_Y=\nu$.
                    
                    Observe that $\nu$ is $f$-invariant. Also, $\supp(\nu)\subseteq Y$: let $x\in X\setminus Y$. Since $Y$ is closed, the distance from $x$ to $Y$ is strictly positive, namely $r\coloneq d(x,Y)>0$. Define $\eps=r/2$. We prove that $\nu(B(x,r/2))=0$, so $x\not\in\supp(\nu)$ follows. 
                    By convergence $Y_n\to Y$ in the Hausdorff metric, there exists $N\in\N$ such that $\forall n\geq N$
                        \[Y_n\subseteq \{z\in X \mid d(z,Y)<\eps\}.\]
                    On the other hand, if $z\in B(x,\eps)$, then $d(z,Y)\geq \eps$. Hence $B(x,\eps)\cap\{z\in X \mid d(z,Y)<\eps\}=\emptyset$. 
                    Therefore, for every $n\geq N$
                        \[Y_n\cap B(x,\eps)=\emptyset.\]
                    Each measure $\mu_n$ is supported on $Y_n$, which yields $\mu_n(B(x,\eps))=0$ for every $n\geq N$. Passing to the  convergent subsequence $\mu_{n_j}$ we obtain
                        \[\nu(B(x,\eps))\leq\liminf \mu_{n_j}(B(x,\eps))=0.\]
                    It follows that $\nu$ is $f$-invariant and supported on $Y$, but $(Y,f)$ is uniquely ergodic with unique invariant measure $\mu_Y$, so it must be $\nu=\mu_Y$.    
                \end{proof}

                The previous lemma allows us to conclude that if $\text{WMM}(X,f)$ is a $G_\delta$ set of $\mathcal{M}_f(X)$, then $g^{-1}(\text{WMM}(X,f))$ is a $G_\delta$ subset of $\text{UE}(X,f)$ and therefore a $G_\delta$ subset of $\overline{TT'(X,f)}$. 

                Define
                    \[\text{WM}(X,f)\coloneq \{Y\in \text{UE}(X,f) \mid \mu_Y \in \mathcal{M}_f(X) \text{ is weakly mixing}\},\]
                so $g^{-1}(\text{WMM}(X,f))=\text{WM}(X,f)$.

                \begin{theorem}\label{thm:ueweakmixinggeneric}
                    Let $(X,f)$ be a mixing Smale space. The family of uniquely ergodic subsystems with a weakly mixing invariant measure $\text{WM}(X,f)$ is generic in $\overline{TT'(X,f)}$.
                \end{theorem}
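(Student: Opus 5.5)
The proof splits into a $G_\delta$ part and a density part, both carried out inside $\overline{TT'(X,f)}$, followed by an application of the Baire Category Theorem.

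\emph{$G_\delta$ part.} The proof of Lemma \ref{lem:weakmixingSFTGdelta} uses only that the space is compact metric and the map is continuous, as remarked after it. So the same formula shows that $\mathrm{WMM}(X,f)$ is a $G_\delta$ subset of $\mathcal{M}_f(X)$. By the continuity lemma just proved, $g\colon \mathrm{UE}(X,f)\to\mathcal{M}_f(X)$ is continuous, hence $\mathrm{WM}(X,f)=g^{-1}(\mathrm{WMM}(X,f))$ is a $G_\delta$ subset of $\mathrm{UE}(X,f)$. From the proof of Theorem \ref{thm:UEgenericinT}, $\mathrm{UE}(X,f)$ is itself a $G_\delta$ in $S(X,f)$. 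A relative $G_\delta$ of a $G_\delta$ subspace is a $G_\delta$ of the ambient space, so $\mathrm{WM}(X,f)$ is a $G_\delta$ in $S(X,f)$. Intersecting with $\overline{TT'(X,f)}$ makes it a $G_\delta$ there.

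\emph{Reduction of density to the symbolic cover.} By the lemma identifying $\pi_*(\overline{TT'(\Sigma,\sigma)})=\overline{TT'(X,f)}$, and since a continuous surjection maps dense sets to dense sets, it suffices to find a dense subset $\mathcal{D}\subseteq\overline{TT'(\Sigma,\sigma)}$ whose images lie in $\mathrm{WM}(X,f)$. If $Z\in\mathcal{D}$ is uniquely ergodic with weakly mixing measure $\nu$, then $\pi(Z)$ is uniquely ergodic by Proposition 3.11 in \cite{DGS}, as used earlier. Its unique measure is $\pi_*\nu$, because every invariant measure on $\pi(Z)$ lifts to $Z$. Finally, $(\pi(Z),\pi_*\nu)$ is a measure-theoretic factor of the weakly mixing system $(Z,\nu)$, hence weakly mixing. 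So $\pi(Z)\in\mathrm{WM}(X,f)$.

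\emph{Symbolic density, which I expect to be the main obstacle.} The symbolic density is known from \cite{PavlovSchmieding} only when $\Sigma$ is a full shift, so we must show that $\mathrm{WM}(\Sigma,\sigma)$ is dense in $\overline{TT'(\Sigma,\sigma)}$ for a mixing SFT $\Sigma$. We will follow the strategy of \cite{PavlovSchmieding}.
\begin{enumerate}
\item Given $W\in TT'(\Sigma,\sigma)$ and $n$ large, replace $W$ by its $n$-step SFT approximation $W^{(n)}$, which lies within $2^{-n}$ of $W$. Since $W$ is totally transitive, $W^{(n)}$ is an irreducible and aperiodic SFT, that is, mixing.
\item Inside $W^{(n)}$, build a minimal, uniquely ergodic, weakly mixing subshift whose language of length $n$ is all of $\mathcal{L}_n(W^{(n)})$. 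This places it within $2^{-n}$ of $W^{(n)}$.
\item For the construction, fix a word $u$ containing every word of $\mathcal{L}_n(W^{(n)})$, and use mixing to choose connecting words of every sufficiently large length. Then run a Chac\'on-type cutting-and-stacking scheme: at stage $k$, the block $B_{k+1}$ is a concatenation of copies of $B_k$ separated by spacers of two lengths differing by one.
\item The spacers must be admissible paths in the Rauzy graph; this is where mixing of $W^{(n)}$, rather than of a full shift, is used.
\end{enumerate}

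The two-length spacer insertion is the standard mechanism that destroys every rational and irrational eigenvalue and so yields weak mixing. Uniform block frequencies then give unique ergodicity, and recurrence of $B_k$ with bounded gaps gives minimality. The delicate points are keeping the spacers admissible and bounding their relative density, so that the eigenvalue argument (a Chac\'on/del Junco-type estimate) goes through.

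If this direct construction proves too cumbersome, the fallback is to embed a full shift on a large alphabet of $W^{(n)}$-admissible words of a common length into $W^{(n)}$. We would then transport the full-shift density result of \cite{PavlovSchmieding} through this embedding, composed with a higher-power recoding. This recoding is a finite-to-one map, so unique ergodicity and weak mixing of the measure should be checked to survive it, as in the reduction step above.

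Once density is established, Baire's theorem in the complete space $\overline{TT'(X,f)}$ finishes the proof.
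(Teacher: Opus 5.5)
Your $G_\delta$ argument and your reduction to the symbolic cover are the paper's own. You are slightly more careful there: you make explicit that $\mathrm{UE}(X,f)$ is itself a $G_\delta$ in $S(X,f)$, which the paper's passage from ``$G_\delta$ in $\mathrm{UE}(X,f)$'' to ``$G_\delta$ in $\overline{TT'(X,f)}$'' uses without saying so.

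The only divergence is the symbolic density step. The paper constructs nothing there. It cites Theorem 6.8 of \cite{PavlovSchmieding} for the density of $\mathrm{WM}(\Sigma,\sigma)\cap\overline{TT'(\Sigma,\sigma)}$ in $\overline{TT'(\Sigma,\sigma)}$ and pushes it forward by $\pi_*$. You worry that this result is only stated for full shifts, and the paper voices the same worry at the start of the subsection. That worry is removed by a soft observation rather than a construction. If $\Sigma$ is given by forbidden words of length at most $N$, then whether $Z\subseteq\Sigma$ depends only on $\mathcal{L}_N(Z)$, so $S(\Sigma,\sigma)$ is clopen among all subshifts of $\mathcal{A}^{\Z}$. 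Hence $\overline{TT'(\Sigma,\sigma)}=\overline{TT'(\mathcal{A}^{\Z},\sigma)}\cap S(\Sigma,\sigma)$ is relatively open in $\overline{TT'(\mathcal{A}^{\Z},\sigma)}$, and the full-shift dense $G_\delta$ restricts to a dense $G_\delta$ there. Your steps (1)--(4) and the fallback are therefore unnecessary.

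Your Chac\'on route would buy a self-contained proof and an explicit dense family, but as written it is only an outline. It is also easier than you anticipate. Every $B_k$ begins and ends with $B_0$, so a single pair of admissible spacers $s,s'$ with $|s'|=|s|+1$ serves at every stage, for instance $B_{k+1}=B_ksB_ks'B_k$. Such a pair exists because the Rauzy graph is primitive, which you correctly deduce from total transitivity of $W$. The resulting subshift is a factor, via level labelling, of a Chac\'on-type rank-one system with geometrically growing heights, so the spacer measure is summable. The classical estimate $\lambda^{h_k+|s|}\to1$ and $\lambda^{h_k+|s|+1}\to1$ then forces every eigenvalue to equal $1$, which gives weak mixing.

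Finally, Baire's theorem is not needed at the end: a dense $G_\delta$ is already residual.
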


                \begin{proof}
                    We already know that $\text{WM}(X,f)$ is a $G_\delta$ subset of $\overline{TT'(X,f)}$, so it remains to show its density. Note that  $\pi:(\Sigma,\sigma)\to(X,f)$ is a continuous factor, so $\pi_*:S(\Sigma,\sigma)\to S(X,f)$ preserves unique ergodicity. It is also a measurable factor, so the push-forward measure $\pi_*\mu\in\mathcal{M}_f(X)$ of a weakly mixing measure $\mu\in\mathcal{M}_\sigma(\Sigma)$ is weakly mixing. Therefore, 
                        \[\pi_*(\text{WM}(\Sigma,\sigma)\cap \overline{TT'(\Sigma,\sigma)})\subseteq \text{WM}(X,f)\cap \overline{TT'(X,f)}\] 
                    is a dense subset of $\overline{TT'(X,f)}$ by Theorem 6.8 in \cite{PavlovSchmieding}. 
                \end{proof}

                Note that if $(Y,f)$ is a minimal uniquely ergodic subsystem with a weakly mixing measure $\mu_Y$, then $(Y\times Y,f\times f,\mu\times\mu)$ is ergodic and $\supp(\mu\times\mu)=Y\times Y$. This implies that $(Y,f)$ is topologically weakly mixing.  

                \begin{corollary}
                    Let $(X,f)$ be a mixing Smale space. The set of topologically weakly mixing subsystems in $S(X,f)$ is residual in $\overline{TT'(X,f)}$.
                \end{corollary}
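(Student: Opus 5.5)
The plan is to intersect three residual sets in $\overline{TT'(X,f)}$ and verify that every member of the intersection is topologically weakly mixing. The first set is $\mathrm{WM}(X,f)\cap\overline{TT'(X,f)}$, which is residual by Theorem \ref{thm:ueweakmixinggeneric}. The second is the set of minimal subsystems, residual by the theorem on minimality, unique ergodicity and zero entropy in $\overline{TT'(X,f)}$. The third is $TT'(X,f)$ itself, a dense $G_\delta$ in its closure; it is convenient but not essential. Since $\overline{TT'(X,f)}$ is a closed subset of the compact space $S(X,f)$, it is a Baire space, so the intersection $\mathcal{G}$ of these countably many residual sets is again residual.

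The main step is to show that each $Y\in\mathcal{G}$ is topologically weakly mixing, that is, $(Y\times Y,f\times f)$ is topologically transitive. Let $\mu$ be the unique invariant measure on $Y$. Minimality gives $\supp\mu=Y$: the support is a nonempty closed invariant subset, so it equals $Y$. Consequently $\supp(\mu\times\mu)=Y\times Y$, because product rectangles $U\times V$ of nonempty open sets have measure $\mu(U)\mu(V)>0$. Weak mixing of $\mu$ is equivalent to ergodicity of $\mu\times\mu$ for $f\times f$, a standard fact (Walters, Theorem 1.24) that I will cite.

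To get transitivity, take nonempty open sets $A,B\subseteq Y\times Y$, shrunk to rectangles. Both have positive $\mu\times\mu$ measure, and ergodicity gives
\[
\frac{1}{n}\sum_{j=0}^{n-1}(\mu\times\mu)\bigl((f\times f)^{-j}A\cap B\bigr)\to(\mu\times\mu)(A)(\mu\times\mu)(B)>0,
\]
so some $j$ satisfies $(f\times f)^{-j}A\cap B\neq\emptyset$. Hence $Y\times Y$ is topologically transitive. If the paper's convention requires $j>0$, the limit still forces infinitely many such $j$. This is exactly the remark preceding the corollary, and I would write it out in this form.

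There is no serious obstacle; the only point needing care is bookkeeping. The corollary speaks of the set of topologically weakly mixing subsystems, so I need that this set contains a residual set, not that it is itself $G_\delta$. Residual means containing a dense $G_\delta$, and supersets of residual sets are residual, so the inclusion $\mathcal{G}\subseteq\{\text{topologically weakly mixing}\}\cap\overline{TT'(X,f)}$ suffices. One must also make sure the residual sets are all taken relative to the same space $\overline{TT'(X,f)}$; this holds because each was established there, using the density arguments via $\pi_*$ from $\overline{TT'(\Sigma,\sigma)}$.
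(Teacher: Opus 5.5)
Your proposal is correct and follows essentially the same route as the paper. The paper intersects the residual set of minimal (uniquely ergodic) subsystems with $\mathrm{WM}(X,f)$ and observes that full support of $\mu\times\mu$ together with ergodicity of $\mu\times\mu$ forces transitivity of $Y\times Y$. Your argument does the same, spelling out the Ces\`aro-average step and the Baire-space bookkeeping that the paper's remark leaves implicit.
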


                Observe that Theorem \ref{thm:ueweakmixinggeneric} shows that the dense conjugacy class of a Sturmian subshift in Proposition \ref{prop:sturmiandense} is meager, since a Sturmian subshift never has a weakly mixing invariant measure, and neither does any element of its conjugacy class. Alternatively, such a conjugacy class is countable and $\overline{TT'(X,f)}$ is perfect, so this conjugacy class is always meager.

                \section*{Acknowledgment of AI use}
                The author used large language models for syntax and grammar corrections and reference searches.
  
    \bibliography{Genericity.bib}
\end{document}